\newtheorem{theorem}{Theorem}[section]
\newtheorem{lemma}[theorem]{Lemma}
\theoremstyle{definition}
\newtheorem{definition}[theorem]{Definition}
\theoremstyle{remark}
\newtheorem*{question}{Question}
\theoremstyle{lemma}
\newtheorem*{subgroupcor}{Subgroup Theorem}
\newtheorem*{smith}{Smith Theory}
\newtheorem*{finiteorder}{Finite Order Theorem}
\newtheorem*{isometry}{Isometry Theorem}
\newtheorem*{subgraph}{Subgraph Lemma}
\newtheorem*{pathlemma}{Path Lemma}
\newtheorem*{realizable}{Realizability Lemma}
\numberwithin{equation}{section}
\def\Z{\mathbb{Z}}
\def\a{{\alpha}}
\def\b{{\beta}}
\def\g{{\gamma}}
\def\d{{\delta}}
\newcommand{\semi}{{\rtimes}}
\def\TSG{{\mathrm{TSG}}}
\def\fix{{\mathrm{fix}}}
\def\Aut{{\mathrm{Aut}}}
\begin{document}

\title{Topological Symmetries of the Heawood family}


\author{Blake Mellor}
\address{Loyola Marymount University, 1 LMU Drive, Los Angeles, CA 90045}
\email{blake.mellor@lmu.edu}
\curraddr{}
\thanks{}

\author{Robin Wilson}
\address{Loyola Marymount University, 1 LMU Drive, Los Angeles, CA 90045}
\curraddr{}
\email{robin.wilson@lmu.edu}
\thanks{}

\subjclass[2020]{Primary 57M15}

\date{}

\begin{abstract} 
The {\em topological symmetry group} of an embedding $\Gamma$ of an abstract graph $\gamma$ in $S^3$ is the group of automorphisms of $\gamma$ which can be realized by homeomorphisms of the pair $(S^3, \Gamma)$.  These groups are motivated by questions about the symmetries of molecules in space.  In this paper, we find all the groups which can be realized as topological symmetry groups for each of the graphs in the Heawood family. This is an important collection of spatial graphs, containing the only intrinsically knotted graphs with 21 or fewer edges.  As a consequence, we discover that the graphs in this family are all intrinsically chiral.
\end{abstract}

\maketitle

\section{Introduction}\label{S:intro}

The chemical properties of certain molecules are often strongly influenced by their symmetries in space. Molecules with the same chemical structure but whose embeddings in space are not equivalent are called {\bf stereoisomers}, and can often have quite different properties. The subfield of {\em stereochemistry} focuses on the spatial structure of molecules, and particularly on the study of stereoisomers. Historically, chemists have been most interested in rigid symmetries of molecules, but as our ability to synthesize molecules advances there is increased interest in symmetries of more ``flexible" molecules \cite{ro}. To describe the symmetries of these more complex molecules, Jonathan Simon \cite{si} introduced the {\bf topological symmetry group} of an embedded graph as the group of automorphisms of the graph induced by homeomorphisms of $S^3$.

Since Simon's original paper, considerable work has been done on topological symmetry groups.  As a generalization of the study of symmetries and achirality of knots and links, this is a natural and important problem in low dimensional topology. In many cases, the motivating question is: given an abstract graph (e.g. chemical structure of a molecule), what are the possible topological symmetry groups over all embeddings of the graph in $S^3$? This problem has been addressed for several important families of graphs, including complete graphs \cite{cf, cfo, fmn2, fmn3, fnt},  complete bipartite graphs \cite{flmpv, hmp, me} and M\"{o}bius ladders \cite{f1, fl}, as well as for some individually interesting graphs \cite{cfhltv, efmsw, flw}.  There are also some broad results restricting possible topological symmetry groups for any 3-connected graph \cite{fnpt}.

The Heawood family of graphs, shown in Figure \ref{F:Heawood}, is an important family of graphs in low-dimensional topology, consisting of all graphs which can be derived from the complete graph $K_7$ by $\nabla Y$ or $Y\nabla$ moves. A $\nabla Y$ move is an operation in which a triangle in the graph is replaced by a degree-three vertex in the shape of a ``Y", while a $Y\nabla$ move is the reverse, as shown in Figure \ref{F:triangleY}. Aside from $K_7$, the best known graph in the family is the Heawood graph (denoted $C_{14}$). The 14 graphs of the family derived from $K_7$ only by $\nabla Y$ moves are all intrinsically knotted -- in fact, they are the only intrinsically knotted graphs with 21 or fewer edges \cite{lklo}.  The other 6 graphs, denoted $N_i$ and $N'_i$, are not intrinsically knotted; they were the first examples demonstrating that $Y\nabla$ moves do not preserve intrinsic knottedness \cite{fn}.

\begin{figure} [tbp]
$$\scalebox{.7}{\includegraphics{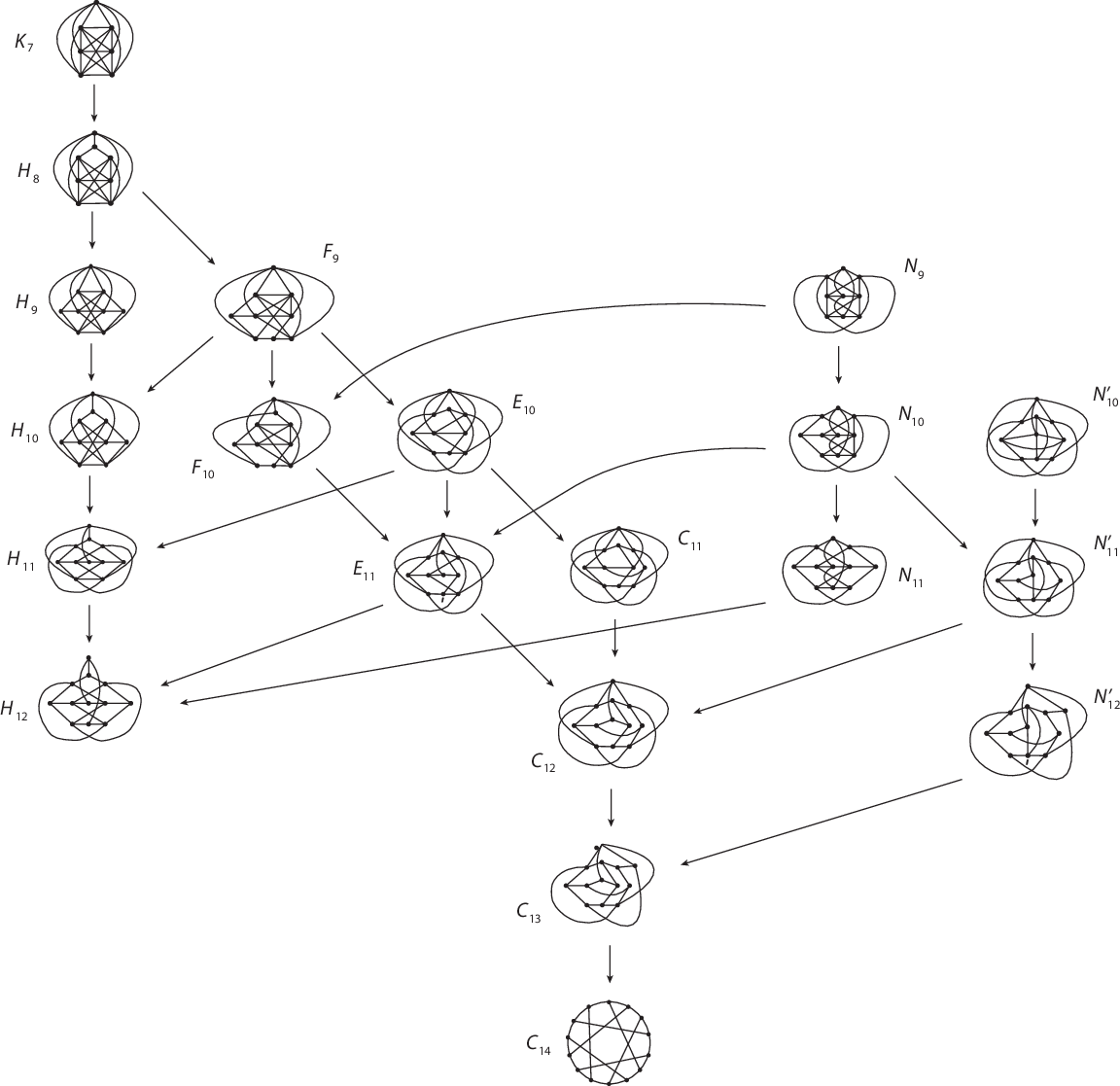}}$$
\caption{The Heawood family of graphs (adapted from \cite{lklo}).  Arrows indicate $\nabla Y$ moves between graphs.}
\label{F:Heawood}
\end{figure}

\begin{figure} [htbp]
$$\scalebox{.7}{\includegraphics{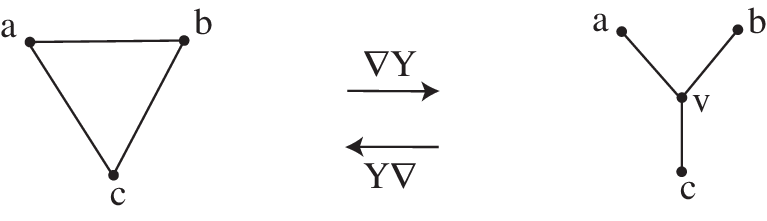}}$$
\caption{$\nabla Y$ and $Y\nabla$ moves.}
\label{F:triangleY}
\end{figure}

Our main purpose in this paper is to determine what groups can occur as topological symmetry groups for some embedding of each graph in the Heawood family of graphs.  Aside from increasing the library of spatial graphs whose topological symmetry groups are known, the Heawood family provides an opportunity to explore how the $\nabla Y$ and $Y\nabla$ moves affect the symmetry groups. The classification of topological symmetry groups has been done for $K_7$ \cite{fmn3} and the Heawood graph $C_{14}$ \cite{flw}.  Baza \cite{ba} classified the topological symmetry group for the graphs we denote by $H_8$ and $F_9$, though we will include our own arguments here.  Our names for the graphs are those used by Lee, Kim, Lee and Oh \cite{lklo}.  Table \ref{Ta:TSG} summarizes our results, giving for each graph the automorphism group and the list of positively realizable topological symmetry groups (i.e., groups of automorphisms that are induced by orientation-preserving homeomorphisms of a single embedding of the graph in $S^3$). One of our most surprising observations is that, for the graphs in the Heawood family, every realizable group is also positively realizable.  In fact, our methods allow us to show that all the graphs in the family are intrinsically chiral (i.e., for each graph in the family, there is no embedding of the graph that is ambient isotopic to its mirror image). In Section \ref{S:background}, we give the necessary definitions, and the results we will be using from previous work.  Section \ref{S:details} contains the details of the arguments for each graph, along with embeddings realizing the various groups. Finally, in Section \ref{S:conclusion} we connect our results to intrinsic chirality, and pose some questions for future work.

\begin{table}[htbp]
\begin{tabular}{|c|c|c|}
\hline 
Graph & Automorphism group & Positively realizable  \\ \hline
$K_7$ & $S_7$ & $D_7, D_5, D_3, \Z_7, \Z_5, \Z_3, \Z_2$  \\ \hline
$H_8$ & $S_3 \times S_4$ & $\Z_3, \Z_2$  \\ \hline
$H_9$ & $(D_3 \times D_3) \semi \Z_2$ & $D_3, \Z_3, \Z_2$  \\ \hline
$H_{10}$ & $D_4$ & $\Z_2$  \\ \hline
$H_{11}$ & $D_2$ & $\Z_2$  \\ \hline
$H_{12}$ & $D_3 \times \Z_2$ & $D_3, \Z_3, \Z_2$  \\ \hline
$F_9$ & $D_4 \times \Z_2$ & $\Z_2$ \\ \hline
$F_{10}$ & $(\Z_2)^3 \semi D_3$ & $D_3, \Z_3, \Z_2$ \\ \hline
$E_{10}$ & $D_3$ & $\Z_3$  \\ \hline
$E_{11}$ & $D_3$ & $\Z_3$ \\ \hline
$C_{11}$ & $S_4$ & $\Z_3$  \\ \hline
$C_{12}$ & $D_4$ & trivial  \\ \hline
$C_{13}$ & $S_4$ & $\Z_3$  \\ \hline
$C_{14}$ & ${\rm PGL}(2,7)$ & $D_7, D_3, \Z_7, \Z_6, \Z_3, \Z_2$  \\ \hline
$N_9$ & $(\Z_2)^3 \semi D_3$ & $D_3 \times \Z_2, \Z_6, D_3, \Z_3, D_2, \Z_2$  \\ \hline
$N_{10}$ & $D_3$ & $D_3, \Z_3, \Z_2$  \\ \hline
$N_{11}$ & $D_3 \times \Z_2$ & $D_3 \times \Z_2, \Z_6, D_3, \Z_3, D_2, \Z_2$  \\ \hline
$N'_{10}$ & $D_8$ & $D_2, \Z_2$  \\ \hline
$N'_{11}$ & $D_2$ & $\Z_2$   \\ \hline
$N'_{12}$ & $D_6$ & $D_6, D_3, D_2, \Z_6, \Z_3, \Z_2$  \\ \hline
\end{tabular}
\medskip
\label{Ta:TSG}
\caption{Realizable topological symmetry groups for graphs in the Heawood family. All realizable groups for these graphs are positively realizable.}
\end{table}

\section{Background and definitions} \label{S:background}

Before we begin, we introduce some terminology, and some important tools from other papers.  An {\em abstract graph} (or just a {\em graph}) $\gamma$ is a pair $(V, E)$, where $V$ is a set of vertices and $E \subseteq V \times V$ is a set of edges. We will denote the edge between vertices $v$ and $w$ by $\overline{vw}$ (since our graphs are not directed, $\overline{vw} = \overline{wv}$). We should note that graph theorists usually represent an edge just as an unordered pair $\{v, w\}$; however, since we are looking at graph embeddings, we want to distinguish between a pair of vertices $\{v, w\}$ that will be embedded as two points, and an edge $\overline{vw}$ that will be embedded as an arc. An embedding $f: \gamma \rightarrow S^3$ means (1) an embedding of the vertices $V$ in $S^3$ and (2) for each edge $\overline{vw}$, an embedding $f_{v,w}: [0,1] \rightarrow S^3$ such that $f_{v,w}(0) = f(v)$ and $f_{v,w}(1) = f(w)$, and the embeddings of distinct edges intersect only at the endpoints. The image $\Gamma = f(\gamma)$ is called an {\em embedded graph} or a {\em spatial graph}.

An {\em automorphism} of a graph $\gamma$ is a bijection $\alpha: V \rightarrow V$ such that $\overline{vw} \in E$ if and only if $\overline{\alpha(v)\alpha(w)} \in E$. The automorphisms of a graph $\gamma$ form a group (the {\em automorphism group}), denoted $\Aut(\gamma)$. To describe automorphism groups (and their subgroups), we will use the following standard terms:
\begin{itemize}
\item The {\em symmetric group} $S_n$ is the group of permutations of a set of $n$ objects.
\item The {\em dihedral group} $D_n$ is the group of size $2n$ with presentation $\langle m, r \mid m^2 = r^n = 1, rm = mr^{-1}\rangle$.
\item The {\em cyclic group} $\Z_n$ is the group of size $n$ with presentation $\langle r \mid r^n = 1\rangle$.
\item The {\em direct product} $G \times H$ is the Cartesian product $G \times H$ with the group operation $(g_1, h_1)(g_2, h_2) = (g_1g_2, h_1h_2)$.
\item The {\em semidirect product} $G \semi H$ is the Cartesian product $G\times H$ with the group operation $(g_1, h_1)(g_2, h_2) = (g_1\phi(h_1)(g_2), h_1h_2)$, for some homomorphism $\phi: H \rightarrow \Aut(G)$. 
\end{itemize}

A homeomorphism of $S^3$ which takes an embedded graph $\Gamma$ to itself (taking vertices to vertices and edges to edges) induces an automorphism on the underlying abstract graph. We are interested in which automorphisms can be induced in this way for some embedding of a given graph. In what follows, we will refer to a homeomorphism of $S^3$ taking an embedded graph $\Gamma$ to itself (taking vertices to vertices and edges to edges) as a homeomorphism of the pair $(S^3,\Gamma)$.

\begin{definition}\label{def1.2} 
Let $\gamma$ be an abstract graph, and let $\Gamma$ be an embedding of $\gamma$ in $S^3$.  We define the {\bf topological symmetry group} $\TSG(\Gamma)$ as the subgroup of $\Aut(\gamma)$ induced by homeomorphisms of $(S^3,\Gamma)$.  We define the {\bf orientation preserving topological symmetry group} $\TSG_+(\Gamma)$ as the subgroup of $\Aut(\gamma)$ induced by orientation preserving homeomorphisms of $(S^3,\Gamma)$.  
\end{definition}

\begin{definition}\label{def1.4} Let  $G$ be a group and let $\gamma$ denote an abstract graph. If there is some embedding $\Gamma$ of $\gamma$ in $S^3$ such that $\TSG(\Gamma)=G$ (resp. $\TSG_+(\Gamma)=G$),  then we say that the group $G$ is {\bf realizable}  (resp. {\bf positively realizable}) for $\gamma$. We will also say that a particular automorphism $\sigma \in \Aut(\gamma)$ is {\bf realizable} (resp. {\bf positively realizable} or {\bf negatively realizable}) if there is an embedding $\Gamma$ of $\gamma$ and a homeomorphism (resp. orientation-preserving homeomorphism or orientation-reversing homeomorphism) of $(S^3, \Gamma)$ which induces $\sigma$.
\end{definition}

Note that for any embedding $\Gamma$ of an abstract graph, either $\TSG_+(\Gamma) = \TSG(\Gamma)$ or $\TSG_+(\Gamma)$ is an index two subgroup of $\TSG(\Gamma)$.  Further, for graphs that contain at least one cycle (i.e. not trees), if a group $G$ is positively realizable by some embedding $\Gamma$, then $G$ is also realizable \cite{cf}. This is because we can add identical chiral knots to every edge of the embedding $\Gamma$ to rule out any orientation-reversing homeomorphisms.  This yields a new embedding $\Gamma'$ with $\TSG(\Gamma') = \TSG_+(\Gamma') = \TSG_+(\Gamma) = G$.  Similarly, we can add distinct knots to every edge of an embedding $\Gamma$ to get a new embedding $\Gamma'$ such that $\TSG(\Gamma') = \TSG_+(\Gamma')$ is the trivial group; so the trivial group is always both realizable and positively realizable.  As a result, for any graph $\gamma$, we are interested in finding (1) the nontrivial groups that are positively realizable for $\gamma$ and (2) the nontrivial groups that are realizable, but not positively realizable, for $\gamma$.

For example, these groups have been found for $K_7$ and the Heawood graph $C_{14}$.  Both $K_7$ and $C_{14}$ are {\bf intrinsically chiral} \cite{ffn}, meaning that no automorphism of the graph is negatively realizable. So every group that is realizable for these graphs is positively realizable.

\begin{theorem} \cite{fmn3}
The nontrivial groups that can be (positively) realized for the complete graph $K_7$ are:
$$D_7, D_5, D_3, \Z_7, \Z_5, \Z_3, \Z_2.$$
\end{theorem}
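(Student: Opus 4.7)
The plan is to combine the strong restrictions imposed by Smith Theory and the Finite Order Theorem with a cycle-structure analysis in $S_7 = \Aut(K_7)$, followed by explicit equivariant embedding constructions. I would proceed in three stages: (i) narrow the list of candidate subgroups, (ii) rule out those that cannot be realized, (iii) construct embeddings realizing each group on the list.

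For stage (i), any finite-order orientation-preserving self-homeomorphism of $S^3$ has fixed set equal to the empty set, a pair of points, or an unknotted circle. Accordingly, an element of $\TSG_+(\Gamma)$ of order $n$ acts on the $7$ vertices of $K_7$ with a cycle type consistent with fixing $0$, $2$, or a positive number of vertices lying on a common circle. For a $3$-connected graph, $\TSG_+(\Gamma)$ is moreover isomorphic to a finite subgroup of the orientation-preserving diffeomorphism group of $S^3$, hence to a subgroup of $SO(4)$, so one need only consider cyclic, dihedral, and polyhedral candidates. Cycle-type enumeration in $S_7$ shows that elements of orders $2,3,5,7$ admit patterns compatible with the Smith constraints, while order-$4$ and order-$6$ elements have cycle types $(4,1,1,1),(4,2,1),(4,3)$ and $(6,1),(3,2,1,1),(3,2,2)$ respectively, each requiring individual scrutiny.

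For stage (ii), the bulk of the work, I would rule out each unrealizable candidate by matching the fixed-set constraints for all its elements simultaneously. For $\Z_6$ with a generator $\sigma$ of cycle type $(3,2,2)$, for example, $\sigma^2$ fixes the four vertices in the two $2$-cycles (forcing its fixed set in $S^3$ to be an unknot $C_2$ through those four) while $\sigma^3$ fixes the three vertices in the $3$-cycle (forcing an unknot $C_3$ through those three); the $\Z_6$-invariance of each circle, together with the action of $\sigma$ on the $21$ edges, yields a contradiction with the allowable configurations of two invariant unknots in $S^3$. Analogous case analyses handle the remaining cycle types for $\Z_4$ and $\Z_6$, and rule out dihedral groups $D_2,D_4,D_6,\dots$ For the polyhedral groups $A_4,S_4,A_5$, the vertex orbit sizes must divide the group order and sum to $7$, and one shows no such partition is compatible with the stabilizer structure of a finite subgroup of $SO(4)$ acting on $K_7$.

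For stage (iii), each surviving group is realized by a symmetric placement of the $7$ vertices of $K_7$ in $S^3$, followed by an equivariant drawing of the $21$ edges: $\Z_7$ and $D_7$ via a regular heptagon on a great circle of $S^3$; $\Z_5$ and $D_5$ via a pentagon perpendicular to an axis through two fixed vertices; $\Z_3$ and $D_3$ via a $(3,3,1)$ arrangement with the fixed vertex on the axis; and $\Z_2$ via a straightforward involution on a $(2,2,2,1)$ or $(2,1,1,1,1,1)$ pattern. In each case distinct knots tied into the various edge orbits guarantee that $\TSG_+$ is exactly the desired group rather than a strictly larger subgroup of $S_7$. The main obstacle is clearly stage (ii): the exclusion of $\Z_4$, $\Z_6$, and the various dihedral and polyhedral candidates demands a careful interplay between the combinatorics of $S_7$ and the geometric constraint that commuting periodic self-homeomorphisms of $S^3$ must have fixed sets lying in very restricted relative positions.
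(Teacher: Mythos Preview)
The theorem you are attempting to prove is not proved in this paper at all: it is quoted verbatim from \cite{fmn3} and used as background for the classification of the other Heawood graphs. Consequently there is no proof in the present paper to compare your proposal against.

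That said, your outline is broadly the approach of \cite{fmn3}: Smith Theory plus the Finite Order and Isometry Theorems restrict the cycle types that can occur in $\TSG_+(\Gamma)\leq S_7$, one passes to finite subgroups of $SO(4)$ to limit the isomorphism types, and explicit symmetric embeddings realize the surviving groups. A few points in your sketch would need tightening. First, your fixed-set bookkeeping for $K_7$ is incomplete: because the graph is complete, the fixed subgraph for a permutation with $k$ fixed vertices contains a $K_k$, so $k\leq 3$ already, and the midpoints of inverted edges must also be accommodated on the same circle (this is what kills, e.g., cycle type $(2,2,1,1,1)$, since a triangle together with two isolated midpoints does not embed in $S^1$). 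Second, your exclusion of $\Z_6$ and $\Z_4$ is too vague as written: the real work in \cite{fmn3} for, say, a $\Z_6$ generated by a permutation of type $(3,2,1,1)$ is that its square has four fixed vertices inducing a $K_4$, which already fails the $S^1$ test; the ``two invariant circles'' argument you describe is not how it is done. Third, your $\Z_2$ construction via cycle type $(2,1,1,1,1,1)$ cannot work (five fixed vertices induce $K_5$); the correct order-$2$ elements have type $(2,2,2,1)$. Finally, the exclusion of $D_2$ is more delicate than you suggest and in \cite{fmn3} uses specific facts about how two commuting involutions of $S^3$ can sit relative to one another; your paragraph does not yet capture this.
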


\begin{theorem} \cite{flw}
The nontrivial groups that can be (positively) realized for the Heawood graph $C_{14}$ are:
$$D_7, D_3, \Z_7, \Z_6, \Z_3, \Z_2.$$
\end{theorem}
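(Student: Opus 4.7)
The plan is a two-stage argument: construct embeddings witnessing positive realizability for each of the six listed groups, then use the structural theorems of Section \ref{S:background} to rule out every other subgroup of $\Aut(C_{14}) = {\rm PGL}(2,7)$. As a preliminary step, I would enumerate the conjugacy classes of subgroups of ${\rm PGL}(2,7)$ (order $336$), using its standard description as a linear group, to obtain the finite list of candidate groups. Since ${\rm PGL}(2,7)$ acts on $C_{14}$ via the incidence structure of the Fano plane, the combinatorial analysis throughout will be framed in terms of points and lines.

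For positive realizability, I would exhibit explicit embeddings for each of $D_7$, $D_3$, $\Z_7$, $\Z_6$, $\Z_3$, and $\Z_2$. The seven-fold symmetry of the Heawood graph is already visible from its Fano-plane origin and can be realized by an embedding with an obvious axis of rotation; extending with an appropriate involution yields $D_7$. An order-three projectivity induces an embedding realizing $\Z_3$, which combined with a commuting involution gives $D_3$. The group $\Z_6$ requires commuting order-$2$ and order-$3$ homeomorphisms, which can be obtained by arranging the vertex orbits consistently along a common axis. The cases $\Z_2$ and the trivial group then follow.

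For the upper bound, the idea is to combine the Finite Order Theorem with Smith Theory: any orientation-preserving homeomorphism of $S^3$ realizing an automorphism of prime order $p$ has fixed point set either empty or an unknotted circle. For each cyclic subgroup $\Z_n$ of ${\rm PGL}(2,7)$ with $n \notin \{2,3,6,7\}$, I would check that the only available permutation patterns on the $14$ vertices and $21$ edges -- constrained by the girth-$6$, bipartite structure of $C_{14}$ -- are incompatible with such a fixed-point set. Non-cyclic subgroups (chiefly the various $D_n$, $A_4$, and $S_4$) are then eliminated using the Subgroup Theorem by identifying a cyclic generator that has already been ruled out.

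The hardest part is excluding the groups that survive the first-order tests, in particular $S_4$ and the larger dihedral subgroups. I expect to handle $S_4$ by invoking the Isometry Theorem to reduce a hypothetical realization to a rigid (linear) tetrahedral action on $S^3$, and then deriving a contradiction from the way the fixed axes of its order-$2$ and order-$3$ elements would have to meet the embedded graph given the Fano-plane incidence structure. A parallel obstruction argument, using the interaction between a $7$-fold axis and any commuting involution, rules out $\Z_{14}$ and larger dihedral extensions. Once these obstructions are in place, the list $\{D_7, D_3, \Z_7, \Z_6, \Z_3, \Z_2\}$ is exactly the set of nontrivial realizable groups, and since $C_{14}$ is intrinsically chiral by \cite{ffn}, realizability and positive realizability coincide.
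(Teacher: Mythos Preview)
The paper does not prove this statement at all: it is quoted as a known result from \cite{flw}, with no argument given here. So there is no ``paper's own proof'' to compare your proposal against; the theorem functions purely as background input for the rest of the paper. Your outline is a plausible strategy for reproving the cited result, and its ingredients (explicit symmetric embeddings for the six groups, Smith Theory and the Isometry Theorem for obstructions, subgroup enumeration in ${\rm PGL}(2,7)$) are indeed the standard toolkit used in \cite{flw}, but if your aim was to match what \emph{this} paper does, the correct response is simply to cite \cite{flw} and move on.
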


We now turn to some of the important tools we will use in this paper. The following result is very useful in showing that certain groups can be realized as a topological symmetry group. In many cases, it tells us that if a group is realizable for a graph $\gamma$, then all of its subgroups are also realizable. (In \cite{fmn1}, this is called the Subgroup Corollary.) Recall that a graph is {\em 3-connected} if removing any two vertices leaves a connected graph.

\begin{subgroupcor}\cite{fmn1} 
\label{subgroup}
Let $\Gamma$ be an embedding of a 3-connected graph in $S^3$.  Suppose that $\Gamma$ contains an edge $e$ that is not pointwise fixed by any nontrivial element of $\TSG_+(\Gamma)$.  Then for every $H \leq \TSG_+(\Gamma)$, there is an embedding $\Gamma'$ of $\Gamma$ with $H = \TSG_+(\Gamma')$.
\end{subgroupcor}

The following result is important for restricting which automorphisms or groups can be realized for a particular graph $\gamma$.

\begin{finiteorder}\cite{f2}
Let $\phi$ be a nontrivial automorphism of a 3-connected graph $\gamma$ that is induced by a homeomorphism $h$ of $(S^3, \Gamma)$ for some embedding $\Gamma$ of $\gamma$. Then there exists another embedding $\Gamma'$ of $\gamma$ such that the automorphism $\phi$ is induced by a {\em finite order} homeomorphism $f$ of $(S^3, \Gamma')$, and $f$ is orientation reversing if and only if $h$ is orientation reversing. 
\end{finiteorder}

Due to the Finite Order Theorem, when considering individual automorphisms, we need only determine whether or not the automorphism is realizable by a finite order homeomorphism.  In particular, this allows us to use the following result of P.A. Smith, which is one of our most powerful tools to determine that certain homeomorphisms are not positively or negatively realizable. Here $\fix(h)$ denotes the fixed point set of a homeomorphism $h: S^3 \rightarrow S^3$.

\begin{smith} \cite{sm}
Let $h$ be a nontrivial finite order homeomorphism of $S^3$.  If $h$ is orientation preserving, then $\fix(h)$ is either the empty set or is homeomorphic to $S^1$.  If $h$ is orientation reversing, then $\fix(h)$ is homeomorphic to either $S^0$ (two distinct points) or $S^2$.
\end{smith}

In particular, we note that if $h$ is orientation reversing with $\fix(h) \cong S^2$, then $h$ must have order 2 (since $h^2$ will be orientation preserving with fixed point set larger than $S^1$, and hence trivial). We can combine the Finite Order Theorem and Smith Theory to give criteria for when an automorphism is positively or negatively realizable.

\begin{definition}
Let $\phi$ be an automorphism of a graph $\gamma$. Let $F = A \cup B$ where $A$ is the subgraph of $\gamma$ induced by the fixed vertices of $\phi$, and $B$ is the set of midpoints of edges whose endpoints are interchanged by $\phi$ (so $B$ is a set of discrete points). We will call $F$ the {\bf fixed subgraph under $\phi$}.
\end{definition}

\begin{realizable}
Let $\phi$ be a nontrivial automorphism of a 3-connected nonplanar graph $\gamma$, and let $F$ be the fixed subgraph under $\phi$. If $\phi$ is positively realizable, then $F$ embeds in $S^1$. If $\phi$ is negatively realizable, then $\phi$ has even order.  Moreover, if $\phi$ is negatively realizable and $F$ contains more than two points, then $\phi$ has order 2 and $F$ is planar.
\end{realizable}
\begin{proof}
Suppose $\phi$ is realizable by a homeomorphism $h$ of $(S^3, \Gamma)$ for some embedding $\Gamma$ of $\gamma$.  Then, by the Finite Order Theorem, there exists another embedding $\Gamma'$ of $\gamma$ such that the automorphism $\phi$ is induced by a finite order homeomorphism $f$ of $(S^3, \Gamma')$, and $f$ is orientation reversing if and only if $h$ is orientation reversing.

If $f$ is orientation preserving, then $\fix(f)$ is either the empty set or homeomorphic to $S^1$ by Smith Theory.  Since the fixed subgraph under $\phi$ is a subset of $\fix(f)$, it must embed in $S^1$.  

If $f$ is orientation reversing, then $\fix(f)$ is homeomorphic to either $S^0$ or $S^2$.  Since the fixed subgraph $F$ under $\phi$ is a subset of $\fix(f)$, if $F$ contains more than two points, then $\fix(f)$ is homeomorphic to $S^2$. This implies $F$ is planar.  In addition, if $\fix(f)$ is homeomorphic to $S^2$, then $f$ is a reflection with order 2.  The order of $\phi$ must divide the order of $f$, so then $\phi$ also has order 2.

Finally consider the case when $\fix(f)$ is homeomorphic to $S^0$ (so $F$ contains two or fewer points). Suppose $f$ has order $m$, and $\phi$ has order $n$; there is some integer $d$ such that $m = nd$.  So $f^n$ has order $d$.  But $f^n$ realizes $\phi^n = id$, so it fixes the embedded graph $\Gamma'$.  Since $\gamma$ is non-planar, this means the fixed set of $f^n$ does not embed in $S^2$; hence, by Smith Theory, $f^n$ is trivial.  So $d = 1$, and $m = n$.  Since $f$ is orientation reversing, it has even order, so we conclude that $\phi$ also has even order.
\end{proof}

Note that all the graphs in the Heawood family are 3-connected and nonplanar, so the Realizability Lemma applies. From the Realizability Lemma, we can immediately conclude: \begin{enumerate}
	\item If the fixed subgraph under $\phi \in \Aut(\gamma)$ does not embed in $S^1$, then $\phi$ is not positively realizable.
	\item If the fixed subgraph under $\phi \in \Aut(\gamma)$ is nonplanar, then $\phi$ is not positively or negatively realizable.
	\item If $\phi$ has odd order, then $\phi$ is not negatively realizable.
	\item If the fixed subgraph under $\phi \in \Aut(\gamma)$ contains more than two points, and $\phi$ does not have order 2, then $\phi$ is not negatively realizable.
\end{enumerate}

The following strengthening of the Finite Order Theorem allows us to assume that if a group of automorphisms is positively realizable, then they can all simultaneously be realized by finite order homeomorphisms of the same embedding of the graph in $S^3$.

\begin{isometry} \cite{fmn2}
Let $\Gamma$ be an embedded 3-connected graph, and let $H\leq \TSG_+(\Gamma)$. Then $\Gamma$ can be re-embedded as $\Gamma'$ such that $H \leq \TSG_+(\Gamma')$ and $\TSG_+(\Gamma')$ is induced by an isomorphic finite subgroup of ${\rm SO}(4)$ (i.e., the group of orientation preserving isometries of $S^3$).
\end{isometry}

The next lemma, the Subgraph Lemma, simply says if a group is realizable for a graph, it must be realizable for every stable subgraph.  This lemma was given in \cite{efmsw}, but the statement there was missing a necessary hypothesis (which was satisfied whenever the lemma was applied in that paper).  We provide a corrected statement and proof here.

\begin{subgraph}
Let $\gamma$ be an abstract graph and $\gamma'$ be a subgraph of $\gamma$ so that every automorphism $\a$ of $\gamma$ fixes $\gamma'$ setwise.  Further assume that the restriction map $\phi: \Aut(\gamma) \rightarrow \Aut(\gamma')$ defined by $\phi(\a) = \a\vert_{\gamma'}$ is injective (i.e., every automorphism of $\gamma$ is determined by its action on $\gamma'$). Assume that $\Gamma$ is an embedding of $\gamma$ and let $\Gamma'$ be the embedding of $\gamma'$ induced by $\Gamma$.  Then $\TSG(\Gamma) \leq \TSG(\Gamma')$ and $\TSG_+(\Gamma) \leq \TSG_+(\Gamma')$.  
\end{subgraph}
\begin{proof}
Every element $\a \in \TSG(\Gamma)$ is realized by a homeomorphism of $S^3$ taking $\Gamma$ to itself setwise.  Hence, every such homeomorphism also takes $\Gamma'$ to itself setwise and realizes $\phi(\alpha) \in \Aut(\gamma')$. So $\phi$ induces a map from $\TSG(\Gamma)$ to a subgroup of $\TSG(\Gamma')$. Since $\phi$ is injective, $\TSG(\Gamma)$ is isomorphic to a subgroup of $\TSG(\Gamma')$.  The same argument holds for $\TSG_+(\Gamma)$. 
\end{proof}

The Path Lemma is a new tool which provides an easy criterion for proving that an automorphism is not negatively realizable.

\begin{pathlemma} 
Suppose $\alpha$ is an automorphism of a graph $\gamma$ whose fixed subgraph contains more than two points, and $a$ and $b$ are vertices of $\gamma$ interchanged by $\alpha$. Further suppose $P = av_1v_2\dots v_kb$ is a path in $\gamma$ such that no vertex $v_i$ is fixed by $\alpha$, and no edge $\overline{v_iv_{i+1}}$ is flipped by $\alpha$. Then $\alpha$ is not negatively realizable.
\end{pathlemma}
\begin{proof}
Suppose $\alpha$ is negatively realizable by $h$; by the Finite Order Theorem we may assume $h$ has finite order.  Since $\alpha$ fixes more than two points, by Smith Theory $h$ must have fixed point set homeomorphic to $S^2$, and $h$ interchanges the inside and outside of the fixed $S^2$.  Without loss of generality, suppose $a$ is inside the sphere and $b$ is outside the sphere.  Then the embedding of the path $P$ from $a$ to $b$ must cross the sphere at some point.  If it crosses the sphere at a vertex $v_i$, then this vertex is fixed.  If it crosses the sphere at a point inside the edge $\overline{v_iv_{i+1}}$, then this edge must be mapped to itself by $h$ (since the graph is embedded, no two edges can share an interior point). If the endpoints of the edge are not fixed, then they must be interchanged. So either there are fixed vertices along the path, or a flipped edge.
\end{proof}

Our final observation allows us, in many case, to only consider automorphism up to conjugacy.

\begin{lemma}
If $\a$ and $\b \in \Aut(\gamma)$ are conjugate, then $\b$ is positively (resp. negatively) realizable if and only if $\a$ is positively (resp. negatively) realizable.
\end{lemma}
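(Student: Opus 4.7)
The plan is to show that conjugation of automorphisms corresponds to relabeling the vertices and edges in an embedding, which does not change whether the realizing homeomorphism is orientation-preserving or orientation-reversing.

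First, suppose $\beta = \sigma\alpha\sigma^{-1}$ for some $\sigma \in \Aut(\gamma)$, and suppose $\alpha$ is positively realizable. Then there is an embedding $f: \gamma \to S^3$ with image $\Gamma$, and an orientation-preserving homeomorphism $h$ of $(S^3, \Gamma)$, such that $h \circ f = f \circ \alpha$. I would then define a new embedding $f' = f \circ \sigma^{-1}$. Its image is $f'(\gamma) = f(\sigma^{-1}(\gamma)) = f(\gamma) = \Gamma$, so the underlying spatial graph is unchanged; only the labeling of vertices and edges has been altered via $\sigma$.

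Next I would check directly that $h$ induces $\beta$ on the embedding $f'$. Computing, we get
\[
h \circ f' = h \circ f \circ \sigma^{-1} = f \circ \alpha \circ \sigma^{-1} = f \circ \sigma^{-1} \circ (\sigma\alpha\sigma^{-1}) = f' \circ \beta.
\]
Since $h$ is the same homeomorphism as before, it is still orientation-preserving, so $\beta$ is positively realizable. If instead we had started with $h$ orientation-reversing, the identical computation shows $\beta$ is negatively realizable. The converse directions follow by symmetry: conjugacy is a symmetric relation, since $\alpha = \sigma^{-1}\beta\sigma$.

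There is essentially no obstacle here; the only care required is the bookkeeping to confirm that the "re-labeled" embedding $f'$ has the same image $\Gamma$ (which uses only that $\sigma$ is a bijection on $\gamma$), and that the same homeomorphism $h$ works, with its orientation type preserved. In particular, the argument does not require $\sigma$ itself to be realized by any homeomorphism of $S^3$.
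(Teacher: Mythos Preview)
Your proof is correct and follows essentially the same approach as the paper: both define a relabeled embedding $f' = f \circ \sigma^{-1}$ (the paper writes $\beta = \phi^{-1}\alpha\phi$ and sets $f' = f\circ\phi$, which is the same up to renaming the conjugating element) and verify that the very same homeomorphism $h$ now induces $\beta$. Your functional computation $h\circ f' = f'\circ\beta$ is a slightly slicker packaging of the paper's vertex-by-vertex check, but the idea is identical.
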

\begin{proof}
Suppose $\a$ is realizable, and $\b = \phi^{-1}\a\phi$ for some $\phi \in \Aut(\gamma)$.  Then there is an embedding $\Gamma = f(\gamma)$ (where $f: \gamma \rightarrow S^3$) and a homeomorphism $h$ of $(S^3, \Gamma)$ that realizes $\a$. Define a new embedding $\Gamma' = f'(\gamma)$ by (1) for each vertex $v$ of $\gamma$, let $f'(v) = f(\phi(v))$ and (2) for each edge $\overline{vw}$ of $\gamma$, let $f'_{v,w} = f_{\phi(v)\phi(w)}$.  Since $\phi$ is an automorphism, it is a bijection between vertices, and $\overline{vw}$ is an edge of $\gamma$ if and only if $\overline{\phi(v)\phi(w)}$ is an edge of $\gamma$, so $f'$ is well-defined.  In fact, $\Gamma'$ is the same subset of $S^3$ as $\Gamma$, except that the vertices have been relabeled, so the vertex labeled $v$ in $\Gamma'$ is labeled $\phi^{-1}(v)$ in $\Gamma$.

So, $h$ is also a homeomorphism of $(S^3, \Gamma')$.  For each vertex $v$ in $\gamma$, we know that $h(f'(v)) = h(f(\phi(v))) = f(\a\phi(v))$ (since $h$ realizes $\a$ on $\Gamma$).  But since $\a\phi(v) = \phi\b(v)$, this means $h(f'(v)) = f(\phi\b(v)) = f'(\b(v))$.  So $h$ realizes $\b$ on $\Gamma'$.  Therefore $\b$ is also realizable, and is positively or negatively realizable as $\a$ is (since we are using the same homeomorphism of $S^3$).
\end{proof}

\section{Computing the topological symmetry groups for the Heawood graphs}\label{S:details}

\SetGraphUnit{.5}
\GraphInit[vstyle=Simple]
\SetVertexSimple[MinSize = 5pt, LineColor = black, FillColor = black]

\subsection{The graph $H_8$}

The graph $H_8$ is obtained from the graph $K_7$ by performing a single $\nabla Y$ move, resulting in the graph shown in Figure~\ref{F:H8}.

\begin{figure} [htbp]
$$\scalebox{.8}{\includegraphics{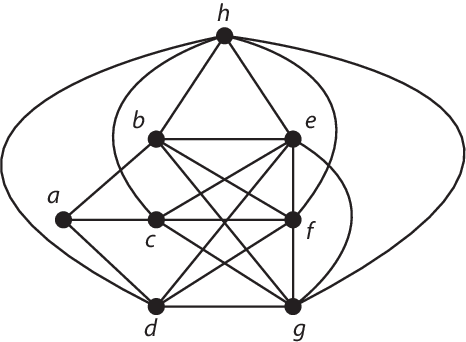}}$$
\caption{The graph $H_8$.}
\label{F:H8}
\end{figure}

The vertex $a$ has degree 3, the vertices $b, c, d$ have degree 5, and the vertices $e, f, g, h$ have degree 6.  So any automorphism of the graph must preserve these three sets of vertices (setwise).  Since the vertices $b, c, d$ all have the same neighbours, as do the vertices $e, f, g, h$, the vertices within each set can be permuted freely.  Hence the automorphism group of $H_8$ is isomorphic to $S_3 \times S_4$. If we let $\a = (bcd)$, $\b = (bc)$, $\g = (ef)$ and $\d = (fgh)$, then $\a$ and $\b$ generate $S_3$ with relations $\a^3 = \b^2 = (\a\b)^2 = 1$, and $\g$ and $\d$ generate $S_4$ with relations $\g^2 = \d^3 = (\g\d)^4 = 1$.

Table \ref{Ta:H8} lists a representative of each conjugacy class of nontrivial automorphisms, its order, a diagram of the fixed subgraph, whether the fixed subgraph embeds in $S^1$ or $S^2$, and (if one exists) a path between two interchanged vertices that contains no fixed vertices or flipped edges (as in the Path Lemma). If the Path Lemma does not apply, we indicate ``N/A'', with a superscript 1 if no pairs of vertices are interchanged, 2 if there are two or fewer fixed points, or 3 if the fixed subgraph does not embed in $S^2$. (In this last case, the Path Lemma might apply, but is redundant.)

\begin{table}
\begin{tabular}{|l|c|c|c|c|c|}
\hline
\rule{0pt}{.15in}Automorphism & Order & Fixed subgraph& $\hookrightarrow S^1$ & $\hookrightarrow S^2$ & Path Lemma \\ \hline
\rule{0pt}{.15in}$\a = (bcd)$ & 3 & \begin{tikzpicture}
\coordinate (O) at (0,0);
\NOEA(O){A} \NOWE(O){B} \SOEA(O){D}
\SOWE(O){C} 
  \WE[unit=1](O){G}

\Edges(A,B,C,D, A)
\Edges(A,C)
\Edges(B,D)

\end{tikzpicture}& No & Yes & N/A\textsuperscript{1} \\ \hline
\rule{0pt}{.15in}$\b = (bc)$ & 2 & \begin{tikzpicture} 
\Vertices{circle}{A,B,C,D,E}
  \Edges(A,B,C,D,E, A)
    \Edges(A,C,E,B,D, A)
     \WE[unit=.5](D){G}
        \Edges(D,G)
\end{tikzpicture}
& No & No & N/A\textsuperscript{3} \\ \hline
\rule{0pt}{.15in}$\g = (ef)$ & 2 &  \begin{tikzpicture}[rotate=0] 
  \Vertex{f}
  \WE[unit=1](f){i} \NO(f){e} \SO(f){g} \NO(i){h} \SO(i){d}

  \Edges(e,i,g,h,f,d,e)
  \Edges(e,h)  \Edges(i,f) \Edges(d,g)
      \tikzset{EdgeStyle/.style = {-
,bend right=60}}
\Edge(h)(d)
\end{tikzpicture} & No & No & N/A\textsuperscript{3} \\ \hline

\rule{0pt}{.15in}$\d = (fgh)$ & 3 &  \begin{tikzpicture}[rotate=0] 
  \Vertex{f}
  \EA(f){i} \WE(f){h} \NO(f){e}  \SO(f){g}   
  \Edges(h,f,i,e,h)
    \Edges(h,f,i,g,h)
\end{tikzpicture} & No & Yes & N/A\textsuperscript{1} \\ \hline
\rule{0pt}{.15in}$\g\d = (efgh)$ & 4 & \begin{tikzpicture}[rotate=0] 
  \Vertex{f}
  \EA(f){a} \WE(f){b} \SO(f){c}
   \Edges(c,b)
  \Edges(c,a)
    \Edges(c,f)
\end{tikzpicture}& No & Yes & N/A\textsuperscript{1} \\ \hline
\rule{0pt}{.15in}$(\g\d)^2 = (eg)(fh)$ & 2 & \begin{tikzpicture}[rotate=0] 
  \Vertex{f}
  \EA(f){a} \WE(f){b} \SO(f){c}
   \Edges(c,b)
  \Edges(c,a)
    \Edges(c,f)
\end{tikzpicture} & No & Yes & $e-f-g$ \\ \hline
\rule{0pt}{.15in}$\a\g = (bcd)(ef)$ & 6 & \begin{tikzpicture}[rotate=0] 
  \Vertex{f}
  \EA(f){i} \WE(f){h}  
  \Edges(f,i,f)
\end{tikzpicture}  & Yes & Yes & $e-d-f$\\ \hline
\rule{0pt}{.15in}$\a\d = (bcd)(fgh)$ & 3 &  \begin{tikzpicture}[rotate=0] 
  \Vertex{a}
  \EA(a){b} 
\end{tikzpicture} & Yes & Yes & N/A\textsuperscript{1} \\ \hline
\rule{0pt}{.15in}$\a\g\d = (bcd)(efgh)$ & 12 &  \begin{tikzpicture}[rotate=0] 
  \Vertex{a}
\end{tikzpicture} & Yes & Yes & N/A\textsuperscript{1} \\ \hline
\rule{0pt}{.15in}$\a(\g\d)^2 = (bcd)(eg)(fh)$ & 6 & \begin{tikzpicture}[rotate=0] 
  \Vertex{a}
\end{tikzpicture}& Yes & Yes & N/A\textsuperscript{2} \\ \hline
\rule{0pt}{.15in}$\b\g = (bc)(ef)$ & 2 &  \begin{tikzpicture}[rotate=0] 
  \Vertex{f}
  \EA(f){i} \WE(f){h} \NO(f){e}  
  \Edges(f,i,e,f)
    \Edges(f,h)
\end{tikzpicture}  & No & Yes & $b-e-c$ \\ \hline
\rule{0pt}{.15in}$\b\d = (bc)(fgh)$ & 6 & \begin{tikzpicture}[rotate=0] 
  \Vertex{a}
  \EA(a){b}    \WE(a){c} 
  \Edges(c,a,b)
\end{tikzpicture}  & Yes & Yes & $b-f-c$ \\ \hline
\rule{0pt}{.15in}$\b\g\d = (bc)(efgh)$ & 4 & \begin{tikzpicture}[rotate=0] 
  \Vertex{a}
  \EA(a){b} 
  \Edges(a,b)
\end{tikzpicture}  & Yes & Yes & $b-e-c$ \\ \hline
\rule{0pt}{.15in}$\b(\g\d)^2 = (bc)(eg)(fh)$ & 2 & \begin{tikzpicture}[rotate=0] 
  \Vertex{a}
  \EA(a){b} 
  \Edges(a,b)
\end{tikzpicture}  & Yes & Yes & $b-e-c$ \\ \hline
\end{tabular}
\caption{Automorphisms of $H_{8}.$}
\label{Ta:H8}
\end{table}

\begin{theorem}\label{T:H8}
A nontrivial group is positively realizable for $H_8$ if and only if it is $\Z_3$ or $\Z_2$.  Moreover, $H_8$ is intrinsically chiral, so these groups are only realizable by orientation-preserving homeomorphisms.
\end{theorem}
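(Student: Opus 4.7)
The plan is to combine Table~\ref{Ta:H8} with the Realizability Lemma, the Path Lemma, and a simple \emph{power trick}: by the Finite Order Theorem one may assume any realizing homeomorphism $h$ has finite order, so $h^k$ realizes $\phi^k$; if $h$ is orientation-preserving so is every $h^k$, and if $h$ is orientation-reversing then $h^2$ is orientation-preserving. Hence any power of a positively realizable automorphism must itself be positively realizable, and if $\phi$ is negatively realizable then $\phi^2$ is positively realizable.

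Reading off Table~\ref{Ta:H8}, the conjugacy classes whose fixed subgraph embeds in $S^1$ are those of $\a\d$, $\a\g$, $\a\g\d$, $\a(\g\d)^2$, $\b\d$, $\b\g\d$, and $\b(\g\d)^2$. Since $\a,\b\in S_3$ commute with $\g,\d\in S_4$, the identities $(\a\g)^3=\g$, $(\a\g\d)^4=\a$, $(\a(\g\d)^2)^2=\a^2$, $(\b\d)^2=\d^2$, and $(\b\g\d)^2=(\g\d)^2$---combined with the table entries showing that the fixed subgraphs of $\g$, $\a$, $\d$, and $(\g\d)^2$ do not embed in $S^1$---eliminate all of these except the classes of $\a\d$ (order~3) and $\b(\g\d)^2$ (order~2). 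To rule out positively realizable groups larger than $\Z_3$ or $\Z_2$, I would verify in $S_3\times S_4$ that the product of an element of the $\a\d$-class with one of the $\b(\g\d)^2$-class lies in the $\b\d$-class (whose square is in the $\d$-class, hence not positively realizable), and that the product of two distinct non-commuting elements within either candidate class lies in the $\a$- or $\d$-class. These routine cycle-type computations, together with the power trick, leave only $\Z_3$ and $\Z_2$ as candidates.

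Positive realizability of $\Z_3$ and $\Z_2$ would be demonstrated by explicit symmetric embeddings: for $\Z_3$, place $H_8$ in $S^3$ invariant under a $2\pi/3$-rotation whose axis passes through the images of $a$ and $e$, cyclically permuting $\{b,c,d\}$ and $\{f,g,h\}$; for $\Z_2$, invariant under a $\pi$-rotation with axis through $a$ and $d$ exchanging $b\leftrightarrow c$, $e\leftrightarrow g$, and $f\leftrightarrow h$. Drawing the bipartite piece between $\{b,c,d\}$ and $\{e,f,g,h\}$ together with the $K_4$ on $\{e,f,g,h\}$ compatibly with the chosen rotation is the principal geometric step, and I expect it to be the main obstacle; everything else is essentially bookkeeping. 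Intrinsic chirality then follows by a second pass through the table: odd-order automorphisms are never negatively realizable; $\b$ and $\g$ have fixed subgraphs not embedding in $S^2$; the Path Lemma handles $(\g\d)^2$ and $\b\g$; the $\g\d$ class is excluded because the fixed subgraph has more than two points and the order is $4\neq 2$; and the power trick reduces each remaining even-order case to one of these (for example $(\a\g)^3=\g$, $(\b\d)^3=\b$, $(\a\g\d)^4=\a$, $(\a(\g\d)^2)^2=\a^2$, $(\b\g\d)^2=(\g\d)^2$).
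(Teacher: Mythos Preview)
Your overall strategy matches the paper's closely, and in one place you actually improve on it: to rule out an embedding realizing both a conjugate of $\a\d$ and a conjugate of $\b(\g\d)^2$, the paper invokes the Isometry Theorem and argues geometrically about intersecting rotation axes, whereas your observation that any such product lies in the class of $\b\d$ (a transposition in $S_3$ times a $3$-cycle in $S_4$, since $V_4\trianglelefteq A_4$ forces $3\text{-cycle}\cdot(2,2)$ to be a $3$-cycle), hence has non--positively-realizable square $\d^2$, is both correct and simpler.

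There is, however, a genuine gap in your ``within the $[\a\d]$ class'' step. The product of two distinct non-commuting elements of $[\a\d]$ need \emph{not} lie in $[\a]$ or $[\d]$: for instance $\a(efg)\cdot\a(egh)=\a^2(fgh)$, which is again in $[\a\d]$. What is true (and what the paper uses) is that the \emph{subgroup generated} by two elements of $[\a\d]$ that are not powers of one another always contains an element conjugate to $\d$ or to $(\g\d)^2$; an easy way to see this is to note that such a subgroup has order divisible by $9$, hence contains a copy of $\Z_3\times\Z_3$, and projecting to $S_3$ produces a nontrivial kernel inside $S_4$. You should replace the product claim with this group-theoretic one. (Your analogous claim for $[\b(\g\d)^2]$ is fine up to enlarging the target: products of two distinct such elements land in $[\a]$, $[(\g\d)^2]$, or $[\a(\g\d)^2]$, all non--positively-realizable.)

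One further small omission in the chirality pass: $\b(\g\d)^2$ has order $2$, so the power trick cannot reduce it to anything; it must be handled directly by the Path Lemma (the table supplies the path $b\text{--}e\text{--}c$). Add it to your Path-Lemma list rather than the ``remaining even-order'' list.
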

\begin{proof}
We can quickly see that the first 6 automorphisms in Table \ref{Ta:H8} are not realizable. They are not positively realizable, because their fixed subgraphs do not embed in $S^1$.  The automorphisms $\a, \d$ and $\g\d$ are not negatively realizable because they fix more than two points, and do not have order 2.  Automorphisms $\b$ and $\g$ are not negatively realizable because their fixed subgraphs do not embed in $S^2$, and $(\g\d)^2$ is not negatively realizable by the Path Lemma.

Of the remaining automorphisms, $\a\g, \b\g, \b\d, \b\g\d$ and $\b(\g\d)^2$ are not negatively realizable by the Path Lemma.  Automorphism $\a\d$ is not negatively realizable because it has odd order.  Since $(\a\g\d)^4 = \a$, which is not realizable, neither is $\a\g\d$.  Similarly, $[\a(\g\d)^2]^3 = (\g\d)^2$, so $\a(\g\d)^2$ is not realizable either.  So none of the automorphisms are negatively realizable, and hence $H_8$ is intrinsically chiral.

We already know that $\a, \b, \g, \d, \g\d, (\g\d)^2, \a\g\d$ and $\a(\g\d)^2$ are not positively realizable.  Also, $\b\g$ is not positively realizable by the Realizability Lemma.  Since $(\a\g)^4 = \a$, $(\b\d)^3 = \b$ and $(\b\g\d)^2 = (\g\d)^2$, we also know $\a\g$, $\b\d$ and $\b\g\d$ are not positively realizable.  This leaves only automorphisms conjugate to $\a\d$ and $\b(\g\d)^2$.

Suppose there is an embedding $\Gamma$ of $H_8$ such that automorphisms conjugate to $\a\d$ and $\b(\g\d)^2$ are realized by orientation-preserving homeomorphisms $h$ and $g$.  By the Isometry Theorem, we may assume $h$ and $g$ are isometries.  Since $\a\d$ and $\b(\g\d)^2$ both have fixed vertices, we know $h$ and $g$ are both rotations around great circles of $S^3$.  Since every automorphism of $H_8$ fixes vertex $a$, these great circles intersect at $a \in \Gamma$, so they must also intersect at the antipodal point. In addition, $h$ fixes one of $\{e, f, g, h\}$ and $g$ fixes one of $\{b, c, d\}$.  Without loss of generality, suppose $h$ fixes $e$ and $g$ fixes $b$.  Since $g$ has order 2, it takes the axis of $h$ to itself (setwise), hence $g(e)$ must be another vertex on the axis of $h$.  Since $h$ fixes only two vertices, and $g(e) \neq a$, we must have $g(e) = e$.  But this contradicts the fact that $g$ does not fix any of $\{e, f, g, h\}$.  So there is no embedding of $H_8$ that simultaneously realizes an automorphism conjugate to $\a\d$ and one conjugate to $\b(\g\d)^2$.  Moreover, the group generated by two automorphisms conjugate to $\a\d$ (that are not powers of each other) includes automorphisms conjugate to $\d$ or $(\g\d)^2$, which are not realizable; so we cannot simultaneously realize two automorphisms conjugate to $\a\d$.  Similarly, two automorphisms conjugate to $\b(\g\d)^2$ have a product conjugate to $\a$, $(\g\d)^2$ or $\a(\g\d)^2$, none of which are realizable.  So the topological symmetry group can at most positively realize $\Z_3$ (generated by an automorphism conjugate to $\a\d$) or $\Z_2$ (generated by an automorphism conjugate to $\b(\g\d)^2$).

The embeddings in Figure \ref{F:H8embeddings} positively realize $\Z_3$ and $\Z_2$.  On the left, the axis of order 3 is perpendicular to the page, and $e$ and $a$ are placed at antipodal points on the axis (in $S^3$).  On the right, the axis of rotation of order 2 is the vertical line through vertices $a$ and $b$.  So $\Z_3$ and $\Z_2$ are both positively realizable.
\end{proof}

\begin{figure} [htbp]
$$\scalebox{.7}{\includegraphics{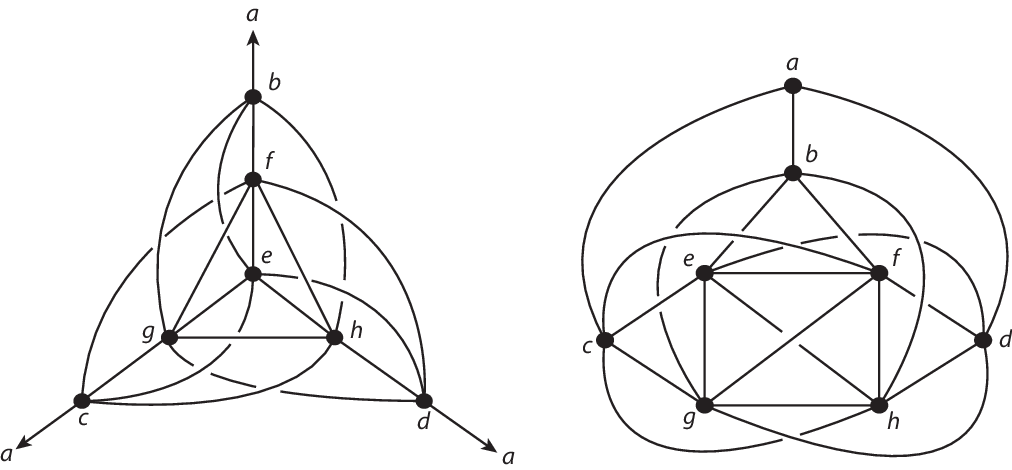}}$$
\caption{Embeddings of $H_8$ which positively realize $\Z_3$ (left) and $\Z_2$ (right).}
\label{F:H8embeddings}
\end{figure}

\subsection{The graph $H_9$}

The graph $H_9$ is obtained from $H_8$ (as shown in Figure \ref{F:H8}) by performing a $\nabla Y$-move to the triangle $efg$, resulting in the graph in Figure \ref{F:H9}.

\begin{figure} [htbp]
$$\scalebox{.8}{\includegraphics{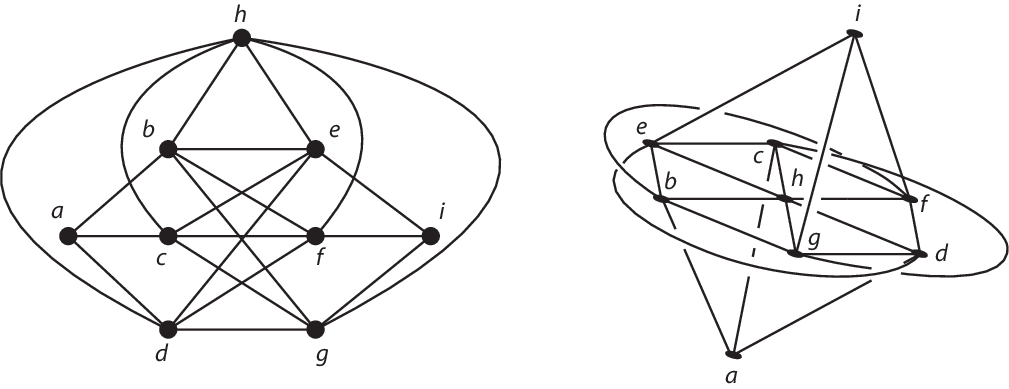}}$$
\caption{The graph $H_9$, and an embedding of $H_9$ which positively realizes $D_3$.}
\label{F:H9}
\end{figure}

Observe that if we delete vertices $a$ and $i$, the remaining graph is isomorphic to $K_{3,3,1}$.  Moreover, since $a$ and $i$ are the only vertices with degree 3, every automorphism of $H_9$ preserves this $K_{3,3,1}$ subgraph setwise, and the induced automorphism of $K_{3,3,1}$ determines the automorphism of $H_9$. Hence $\Aut(H_9) = \Aut(K_{3,3,1}) = \Aut(K_{3,3}) = (D_3 \times D_3) \semi \Z_2$ \cite{efmsw}. The automorphism group is generated by $\a = (bcd)$, $\b = (bc)$, $\g = (efg)$, $\d = (ef)$ and $\phi = (ai)(be)(cf)(dg)$ (note that every automorphism fixes vertex $h$).

Table \ref{Ta:H9} lists a representative of each conjugacy class of nontrivial automorphisms, along with the same information given for $H_8$ in Table \ref{Ta:H8}.  Note that in the diagrams of the fixed subgraph, empty circles represent the midpoints of edges whose endpoints are interchanged by the automorphism. This is based on the conjugacy classes of automorphisms of $K_{3,3}$ found by Nikkuni and Taniyama \cite{nt}.

\begin{table}
\begin{tabular}{|l|c|c|c|c|c|}
\hline
\rule{0pt}{.15in}Automorphism & Order & Fixed subgraph & $\hookrightarrow S^1$ & $\hookrightarrow S^2$ & Path Lemma \\ \hline
\rule{0pt}{.15in}$\a = (bcd)$ & 3 & \begin{tikzpicture}[rotate=0] 
  \Vertex{f}
  \EA(f){i} \WE(f){h} \NO(f){e} \SO(f){g}
  \WE(h){a}
  \Edges(e,i,g,h,e)
  \Edges(h,f,i)
\end{tikzpicture} & No & Yes & N/A\textsuperscript{1} \\ \hline
\rule{0pt}{.15in}$\b = (bc)$ & 2 & \begin{tikzpicture}[rotate=0] 
  \Vertex{f}
  \WE[unit=1](f){i} \NO(f){e} \SO(f){g} \NO(i){h} \SO(i){d}
  \WE(i){a}
  \Edges(e,i,g,h,f,d,e)
  \Edges(e,h) \Edges(a,d,g) \Edges(i,f)
\end{tikzpicture} & No & No & N/A\textsuperscript{3} \\ \hline
\rule{0pt}{.15in}$\a\g = (bcd)(efg)$ & 3 & \begin{tikzpicture}[rotate=0] 
  \Vertices{line}{a,h,i}
\end{tikzpicture} & Yes & Yes & N/A\textsuperscript{1} \\ \hline
\rule{0pt}{.15in}$\b\g = (bc)(efg)$ & 6 & \begin{tikzpicture}[rotate=0] 
  \Vertices{line}{a,d,h,i}
  \Edges(a,d,h)
\end{tikzpicture} & Yes & Yes & $b-e-c$\\ \hline
\rule{0pt}{.15in}$\b\d = (bc)(ef)$ & 2 & \begin{tikzpicture}[rotate=0] 
  \Vertices{line}{a,d,g,i}
  \NO(d){h}
  \Edges(a,d,g,i) \Edges(d,h,g)
\end{tikzpicture} & No & Yes & $b-e-c$ \\ \hline
\rule{0pt}{.15in}$\phi = (ai)(be)(cf)(dg)$ & 2 & \begin{tikzpicture}[rotate=0] 
  \Vertices{line}{h,x,y,z}
  \AddVertexColor{white}{x,y,z}
\end{tikzpicture} & Yes & Yes & $a-b-f-i$ \\ \hline
\rule{0pt}{.15in}$\b\phi = (ai)(becf)(dg)$ & 4 & \begin{tikzpicture}[rotate=0] 
  \Vertices{line}{h,x}
  \AddVertexColor{white}{x}
\end{tikzpicture} & Yes & Yes & N/A\textsuperscript{2} \\ \hline
\rule{0pt}{.15in}$\a\phi = (ai)(becfdg)$ & 6 & \begin{tikzpicture}[rotate=0] 
  \Vertex{h}
\end{tikzpicture} & Yes & Yes & N/A\textsuperscript{2} \\ \hline
\end{tabular}
\caption{Automorphisms of $H_{9}.$}
\label{Ta:H9}
\end{table}

By the Subgraph Lemma, given any embedding, the topological symmetry group for $H_9$ must be a subgroup of the topological symmetry group of the embedded $K_{3,3,1}$ subgraph.  The possible topological symmetry groups for $K_{3,3,1}$ were classified in \cite{efmsw}.

\begin{theorem} \label{T:K331} \cite{efmsw}
\begin{enumerate}
	\item A nontrivial group is positively realizable for  $K_{3,3,1}$ if and only if it is one of: $D_3$, $D_2$, $\mathbb{Z}_3$, and $\mathbb{Z}_2$.
	\item The groups that are realizable but {\bf not} positively realizable for $K_{3,3,1}$ are $D_4$ and $\Z_4$.
\end{enumerate}
\end{theorem}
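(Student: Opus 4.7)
The plan is to mimic the casework template that the excerpt illustrates for $H_8$ and sets up for $H_9$. First I would pin down $\Aut(K_{3,3,1}) \cong (S_3\times S_3)\semi \Z_2$, noting that the apex vertex is fixed by every automorphism and that each automorphism either preserves the two tripartition classes of size $3$ setwise or swaps them. I would then enumerate the conjugacy classes of nontrivial automorphisms --- there are only a handful, essentially those appearing in the top rows of Table \ref{Ta:H9} (a $3$-cycle in one class, a $3$-cycle in each class, a transposition in one class, a ``diagonal'' transposition, the swap $\phi$ of the two classes, and two order-$4$ and order-$6$ elements obtained from $\phi$ composed with internal permutations). For each representative I would draw the fixed subgraph and run the first-pass test: apply the Realizability Lemma (fixed subgraph must embed in $S^1$ for positive realizability; constraints on order and on planarity of the fixed subgraph for negative realizability) together with the Path Lemma.

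This first pass should show that a single $3$-cycle and a single transposition within one of the $3$-classes are not realizable at all (odd order plus non-planar fixed set, or a non-planar fixed set with blocked paths), while the ``diagonal'' automorphisms $\a\g$ (order $3$) and $\b\d$ or $\phi$ (order $2$) are positively realizable; their fixed subgraphs embed in $S^1$. The order-$4$ element $\b\phi$ has a fixed set consisting of the apex plus a single edge-midpoint, so it fails positive realizability but is a candidate for negative realizability, and similarly for the order-$6$ element $\a\phi$. Next I would promote the set of individually realizable automorphisms to groups using the Isometry Theorem: if a finite group positively realizes an automorphism, we may assume the realizing homeomorphisms are isometries of $S^3$, and then standard axis-intersection arguments (exactly like the one in the proof of Theorem \ref{T:H8} showing that two rotation axes through a common fixed vertex cannot coexist) rule out $D_3\times D_3$, $S_3\times S_3$, $(D_3\times D_3)\semi\Z_2$ and any other subgroup of $\Aut(K_{3,3,1})$ larger than $D_3$, $D_2$, $\Z_3$, $\Z_2$. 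This pins down part (1) up to constructing explicit embeddings.

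To finish part (1) I would exhibit explicit embeddings: for $D_3$, place the apex at the north pole, the two $3$-classes as two horizontal equilateral triangles symmetric about the equator, with a horizontal flip axis providing the involution; the $\Z_3$, $\Z_2$, and $D_2$ embeddings arise as subgroups via the Subgroup Corollary (noting $K_{3,3,1}$ is $3$-connected and every edge has a free orbit under the relevant action) or by direct construction. For part (2), the order-$4$ element $\b\phi$ can be realized by an orientation-reversing rotoreflection through the apex and its antipode (essentially the square-antiprism embedding), and this gives an embedding with $\TSG_+ = \Z_2$ but $\TSG = \Z_4$; doubling the construction with a perpendicular reflection realizes $D_4$. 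To show nothing else is realizable but not positively realizable, I would check that any automorphism not already ruled out and not in the positively-realizable list has some power that is unrealizable, so its cyclic group is impossible, and that no larger extensions survive the same axis-intersection argument.

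The main obstacle is the bookkeeping that eliminates the larger subgroups of $(D_3\times D_3)\semi\Z_2$ as positively realizable: one must show that having two independent $3$-fold rotations (or a $3$-fold and an order-$2$ rotation whose axis does not pass through the apex's antipode in the correct orbit) forces incompatible fixed-point structures among the $6$ non-apex vertices. Once that rigidity argument is in place --- and it is essentially the same two-axis-through-a-common-vertex argument used for $H_8$ in the excerpt --- the rest of the classification falls out of the Realizability Lemma together with the explicit embeddings.
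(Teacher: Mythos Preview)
The paper does not give a proof of Theorem~\ref{T:K331}; it is quoted from \cite{efmsw} and used as an input to the analysis of $H_9$. So there is no argument here to compare your proposal against. Your overall template---enumerate conjugacy classes, compute fixed subgraphs, apply the Realizability and Path Lemmas, use axis/isometry arguments to rule out larger groups, then exhibit embeddings---is the natural strategy and is presumably what \cite{efmsw} does.

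That said, your sketch has a concrete error that would derail part~(2). You assert that the order-$4$ element $\beta\phi$ ``has a fixed set consisting of the apex plus a single edge-midpoint, so it fails positive realizability.'' Two points embed in $S^1$, so the Realizability Lemma does \emph{not} exclude $\beta\phi$ on that basis. Worse, you cannot kill it by passing to its square: in $K_{3,3,1}$ (as opposed to $H_9$) the automorphism $(\beta\phi)^2=(bc)(ef)$ has fixed subgraph equal to the triangle on $d,g$ and the apex, which \emph{does} embed in $S^1$. The correct obstruction is an axis argument: if $\beta\phi$ were realized by an order-$4$ rotation, then $(\beta\phi)^2$ would be an order-$2$ rotation about the same axis; since $(\beta\phi)^2$ fixes $d$ and $g$, both lie on that axis, forcing $\beta\phi$ to fix them as well---contradicting $\beta\phi(d)=g$. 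More generally, be careful not to read fixed-subgraph data for $K_{3,3,1}$ directly off Table~\ref{Ta:H9}: the two extra degree-$3$ vertices of $H_9$ enlarge several fixed subgraphs, and some obstructions that hold for $H_9$ (e.g.\ that $\beta\delta$ is not positively realizable) simply evaporate for $K_{3,3,1}$.
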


We will use this result to classify the possible topological symmetry groups of $H_9$. 

\begin{theorem} \label{T:H9}
A nontrivial group is positively realizable for $H_9$ if and only if it is one of: $D_3$, $\mathbb{Z}_3$, and $\mathbb{Z}_2$. Moreover, $H_9$ is intrinsically chiral, so there are no groups that are realizable but not positively realizable.
\end{theorem}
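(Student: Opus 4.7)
My plan is to work through the conjugacy classes in Table~\ref{Ta:H9} using the Realizability Lemma, the Path Lemma, and the fact that $\phi^k$ is realized by $h^k$ whenever $\phi$ is realized by $h$. Most classes fall out quickly: $\a, \b, \b\d$ are not positively realizable since their fixed subgraphs do not embed in $S^1$; $\a, \a\g$ are not negatively realizable (odd order); $\b$ is not negatively realizable because its fixed subgraph contains a $K_{3,3}$ on parts $\{d,h,i\}$ and $\{e,f,g\}$, hence is non-planar; and $\b\g, \b\d, \phi$ are not negatively realizable by the Path Lemma. Powers finish off more: $(\b\g)^3=\b$ and $(\b\phi)^2=\b\d$ show neither $\b\g$ nor $\b\phi$ is positively realizable, and $\b\phi$ is also not negatively realizable (else $h^2$ would positively realize $\b\d$). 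An analogous cube argument rules out $\a\phi$ negatively, since $(\a\phi)^3$ is a $\phi$-type involution.

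To handle $\a\phi$ positively (and rule out $\Z_4$ and $D_4$), I invoke the Subgraph Lemma. Deleting the two degree-$3$ vertices $a, i$ leaves the $K_{3,3,1}$ subgraph, which is preserved setwise by every automorphism of $H_9$, and on which the restriction from $\Aut(H_9)$ is an isomorphism. So $\TSG_+(H_9)$ embeds in $\TSG_+(K_{3,3,1})$; by Theorem~\ref{T:K331} the latter is a subgroup of one of $D_3, D_2, \Z_3, \Z_2, D_4, \Z_4$, and none of these contains the $\Z_6$ generated by $\a\phi$. This bound also eliminates $\Z_4$ and $D_4$ for $H_9$, since its only order-$4$ conjugacy class $\b\phi$ is already ruled out. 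Consequently, the only conjugacy classes that can appear nontrivially in $\TSG_+(H_9)$ are $\a\g$ and $\phi$.

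The main obstacle is ruling out $D_2$. I expect a direct cycle-structure argument: any $\phi$-type involution $\phi_j$ swaps $a\leftrightarrow i$, fixes $h$, and is determined by a bijection $\sigma_j:\{b,c,d\}\to\{e,f,g\}$. For distinct $\phi_1,\phi_2$, the product $\phi_1\phi_2$ fixes $\{a,i,h\}$ pointwise and restricts on $\{b,c,d\}$ to $\sigma_1^{-1}\sigma_2\neq\mathrm{id}$; for $\phi_1\phi_2$ to be an involution, $\sigma_1^{-1}\sigma_2$ must be a transposition, so $\phi_1\phi_2$ has cycle structure $(bc)(ef)$, the $\b\d$ type. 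Since $\b\d$ is not positively realizable, no two distinct $\phi$-type involutions can be simultaneously positively realized, so $D_2$ (which would require three such involutions) is not positively realizable.

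Finally, the embedding in the right half of Figure~\ref{F:H9} positively realizes $D_3$, generated by $\a\g$ and a conjugate of $\phi$ that inverts $\a\g$ under conjugation (such as $\phi'=(ai)(be)(cg)(df)$). Since no two of $\{a,i,h\}$ are adjacent in $H_9$, no edge is pointwise fixed by any nontrivial element of this $D_3$, so the Subgroup Theorem supplies embeddings realizing $\Z_3$ and $\Z_2$ as well. Intrinsic chirality is then immediate from the case analysis above, since every conjugacy class of nontrivial automorphisms of $H_9$ has been shown to have no negatively realizable representative.
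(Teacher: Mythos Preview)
Your argument closely tracks the paper's, with one nice variation: you dispose of $\a\phi$ via the Subgraph Lemma (so $\TSG_+(\Gamma)$ embeds in $\TSG_+$ of the induced $K_{3,3,1}$, which has no element of order $6$), whereas the paper uses a direct Isometry-Theorem axis argument. Both work. One small correction: in Theorem~\ref{T:K331} the groups $D_4$ and $\Z_4$ are realizable but \emph{not positively} realizable for $K_{3,3,1}$, so they should not appear in your list of candidates for $\TSG_+$; this does not affect your conclusion, since none of them contain $\Z_6$ anyway.

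There is, however, a genuine slip in your treatment of $D_2$. The sentence ``no two distinct $\phi$-type involutions can be simultaneously positively realized'' is false: the $D_3$ you construct at the end contains three $\phi$-type involutions simultaneously. What your computation actually shows is that the product $\phi_1\phi_2$ of two distinct $\phi$-type involutions fixes $a,i,h$, hence lies in $[\a\g]$ or $[\b\d]$ --- and is of $\b\d$-type precisely when it has order $2$. In a $D_2$ subgroup all three nontrivial elements are involutions and the product of any two equals the third; so that third element would have to be $\b\d$-type, which is not positively realizable. Even more simply: since $\phi_1\phi_2$ fixes $a$ and $i$, it cannot be $\phi$-type, yet in $D_2$ all three nontrivial elements must be $\phi$-type --- contradiction. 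The paper reaches the same conclusion by checking directly that among the conjugates of $\phi$ the commuting pairs never extend to a commuting triple. With this correction your argument is complete.
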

\begin{proof}
We can quickly see from Table \ref{Ta:H9} that automorphisms conjugate to $\b$ and $\b\d$ are neither positively nor negatively realizable.  Since $(\b\phi)^2 = \b\d$, $\b\phi$ is also not realizable.  Also, $\a$ and $\a\g$ are not negatively realizable because they have odd order, and $\b\g$ and $\phi$ are not negatively realizable by the Path Lemma. Finally, $(\a\phi)^3 = (ai)(bf)(ed)(cg)$ is conjugate to $\phi$; since $\phi$ is not negatively realizable, neither is $\a\phi$.  Since none of the automorphisms are negatively realizable, $H_9$ is intrinsically chiral.

We already know $\b$, $\b\d$ and $\b\phi$ are not positively realizable; in addition, $\a$ is not positively realizable by the Realizability Lemma.  Since $(\b\g)^3 = \b$, $\b\g$ is also not positively realizable. Next, suppose $\a\phi$ is positively realizable; since it has a fixed point, by the Isometry Theorem it is realizable by a rotation about an axis containing $h$.  Then $(\a\phi)^2 = (bcd)(efg)$ is also a rotation, with the same axis.  But this means that $a$ and $i$ are also on the axis of rotation, and hence are fixed by $\a\phi$, which is a contradiction.  Hence $\a\phi$ is not positively realizable.

So the only automorphisms which could be positively realizable are conjugate to $\a\g$ or $\phi$. The embedding on the right in Figure \ref{F:H9} shows how to realize $D_3$ (based on an embedding for $K_{3,3,1}$ in \cite{efmsw}) by realizing both of these automorphisms together. Since the edge $\overline{ce}$ is not fixed by any symmetry of the embedding, the subgroups $\Z_3$ and $\Z_2$ are also realizable by the Subgroup Theorem.

By Theorem \ref{T:K331}, the only groups that could be positively realized for $H_9$ are $D_3$, $D_2$, $\mathbb{Z}_3$, and $\mathbb{Z}_2$. It only remains to show that $D_2$ is not positively realizable.  Suppose there is an embedding $\Gamma$ of $H_9$ with $D_2 \leq \TSG_+(\Gamma)$.  Then all three nontrivial elements in $D_2$ would be conjugate to $\phi$ (since these are the only positively realizable automorphisms of $H_9$ of order 2), and they would all commute with each other.  Without loss of generality, suppose one of the automorphisms in $D_2$ is $\phi$ itself. There are five other automorphisms in the conjugacy class of $\phi$; of these, three commute with $\phi$, but none of those three commute with each other.  So we cannot positively realize $D_2$.
\end{proof}

\subsection{The graph $H_{10}$}

The graph $H_{10}$ is obtained from $H_9$ (as shown in Figure \ref{F:H9}) by performing a $\nabla Y$-move on the triangle $beh$, resulting in the graph shown in Figure~\ref{F:H10}.

\begin{figure} [htbp]
$$\scalebox{.8}{\includegraphics{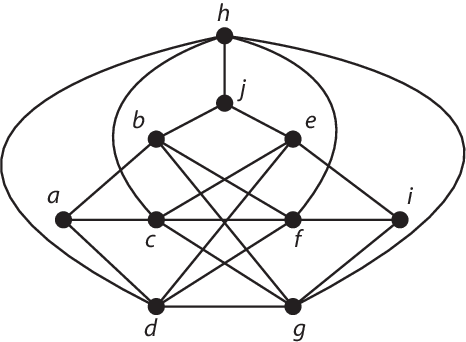}} \qquad \scalebox{.8}{\includegraphics{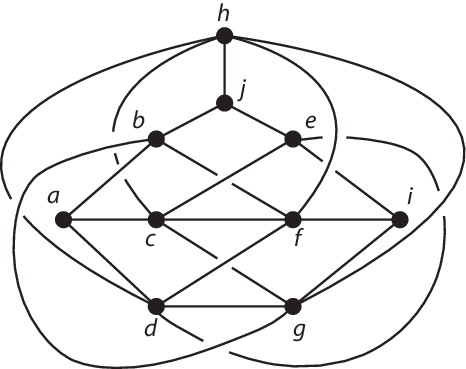}}$$
\caption{The graph $H_{10}$, and an embedding $\Gamma$ of $H_{10}$ with $\TSG(\Gamma) = \TSG_+(\Gamma) = \Z_2$.}
\label{F:H10}
\end{figure}

Any automorphism of $H_{10}$ must fix vertex $j$ (the only vertex of degree 3 adjacent to two vertices of degree 4), vertex $h$ (the only vertex of degree 5 {\em not} adjacent to a vertex of degree 4), the pair $\{a, i\}$ (the other vertices of degree 3), the pair $\{b, e\}$ (the vertices of degree 4), and the set $\{c, d, f, g\}$ (the other vertices of degree 5).  Moreover, the two pairs $\{c, d\}$ and $\{f, g\}$ are either preserved or interchanged, so the square $c-f-d-g-c$ is preserved by every automorphism.  Moreover, an automorphism of this square determines an automorphism of the entire graph, so $\Aut(H_{10}) = D_4$, generated by $\alpha = (ai)(be)(cfdg)$ and $\beta = (cd)$. The possible topological symmetry groups are $D_4$, $\Z_4$, $D_2$ and $\Z_2$ (the subgroups of $D_4$).

Table \ref{Ta:H10} lists a representative of each conjugacy class of nontrivial automorphisms, its order, a diagram of the fixed subgraph, whether the fixed subgraph embeds in $S^1$ or $S^2$, and (if one exists) a path between two interchanged vertices that contains no fixed vertices or flipped edges (as in the Path Lemma).

\begin{table}
\begin{tabular}{|l|c|c|c|c|c|}
\hline
\rule{0pt}{.15in}Automorphism & Order & Fixed subgraph & $\hookrightarrow S^1$ & $\hookrightarrow S^2$ & Path Lemma \\ \hline
\rule{0pt}{.15in}$\a = (ai)(be)(cfdg)$ & 4 & \begin{tikzpicture}[rotate=0] 
  \Vertex{h}
  \EA(h){j} 
  \Edges(h,j)
\end{tikzpicture}   & Yes & Yes & $a-b-f-i$ \\ \hline
\rule{0pt}{.15in}$\a^2 = (cd)(fg)$ & 2 &  \begin{tikzpicture}[rotate=0] 
  \Vertex{f}
  \EA(f){a} \WE(f){b} \SO(f){c}  \EA(a){d} \WE(b){e}
   \Edges(c,b)
  \Edges(c,a)
    \Edges(c,f)
     \Edges(a,d)
       \Edges(b,e)
\end{tikzpicture}  & No & Yes & $c-f-d$ \\ \hline
\rule{0pt}{.15in}$\b = (cd)$ & 2 & \begin{tikzpicture}[rotate=0] 
  \Vertex{f}
  \WE[unit=1](f){i} \NO(f){e} \SO(f){g} \NO(i){h} \SO(i){d}
  \WE(i){a}
  \Edges(e,i,g,h,f,d,e)
  \Edges(e,h) \Edges(a,d,g) \Edges(i,f)
        \tikzset{EdgeStyle/.style = {-
,bend left=60}}
\Edge(e)(g)
\end{tikzpicture} & No & No & N/A\textsuperscript{3}\\ \hline
\rule{0pt}{.15in}$\a\b = (ai)(be)(cg)(df)$ & 2 & \begin{tikzpicture}[rotate=0] 
  \Vertex{h}
  \EA(h){j} 
  \Edges(h,j)
\end{tikzpicture}  & Yes & Yes & $a-b-f-i$ \\ \hline
\end{tabular}
\caption{Automorphisms of $H_{10}.$}
\label{Ta:H10}
\end{table}

\begin{theorem} \label{T:H10}
The only nontrivial group that is positively realizable for $H_{10}$ is $\Z_2$.  Moreover, $H_{10}$ is intrinsically chiral, so no other groups are realizable.
\end{theorem}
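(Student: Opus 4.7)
The plan is to read off from Table \ref{Ta:H10} which of the four conjugacy classes of nontrivial elements of $\Aut(H_{10}) = D_4$ are positively or negatively realizable, and then combine these with the subgroup structure of $D_4$ to pin down $\TSG_+(\Gamma)$.

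First I would rule out negative realizability for every nontrivial automorphism, which simultaneously establishes intrinsic chirality. The automorphism $\b$ has nonplanar fixed subgraph, so by criterion (2) after the Realizability Lemma it is neither positively nor negatively realizable. For $\a$, $\a^2$, and $\a\b$, the fixed subgraph contains more than two points, and the paths $a\text{-}b\text{-}f\text{-}i$ (for $\a$ and $\a\b$) and $c\text{-}f\text{-}d$ (for $\a^2$) listed in the table contain no fixed vertex and no flipped edge; a quick check of each automorphism's action on the intermediate vertices and edges, together with the Path Lemma, shows that none of these is negatively realizable. Hence no nontrivial automorphism is negatively realizable, so $H_{10}$ is intrinsically chiral and every realizable group equals $\TSG_+$.

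Next I would show that the only nontrivial positively realizable element (up to conjugacy) is $\a\b$. The automorphism $\a^2$ is not positively realizable because its fixed subgraph does not embed in $S^1$; $\b$ was already excluded; and $\a$ cannot be positively realizable because if it were, the square of a finite-order orientation-preserving realization would (by the Finite Order Theorem) give an orientation-preserving realization of $\a^2$, which we have excluded. I would then enumerate the subgroups of $D_4$ — namely $\{1\}$, the five $\Z_2$ subgroups generated by $\a^2,\b,\a^2\b,\a\b,\a^3\b$, the $\Z_4 = \langle\a\rangle$, the two $D_2$ subgroups $\langle\a^2,\b\rangle$ and $\langle\a^2,\a\b\rangle$, and $D_4$ itself. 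Any subgroup containing $\a$, $\a^2$, or an element conjugate to $\b$ is excluded, leaving only $\langle\a\b\rangle$ and $\langle\a^3\b\rangle$, which are conjugate copies of $\Z_2$. I should also confirm that two conjugate copies of $\a\b$ cannot be realized together: the product $\a\b\cdot\a^3\b = \a^2$ is not positively realizable, so at most one such involution can be in $\TSG_+$.

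Finally, I would exhibit an embedding positively realizing $\Z_2$. The right-hand picture in Figure~\ref{F:H10} is drawn with an obvious order-2 rotational symmetry, and I would check that the induced permutation of vertices is conjugate to $\a\b = (ai)(be)(cg)(df)$ and fixes $h$ and $j$. This verification is the only place where explicit geometric checking is required; since the embedding is depicted with a visible axis of symmetry, the main obstacle is just bookkeeping the vertex labels against the axis, after which the theorem follows immediately from the previous paragraphs combined with the intrinsic chirality already established.
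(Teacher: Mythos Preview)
Your proposal is correct and follows essentially the same approach as the paper: rule out negative realizability via the Path Lemma and nonplanarity of the fixed subgraph, rule out positive realizability of $\a^2$ and $\b$ via the Realizability Lemma (hence also $\a$, since its square is $\a^2$), observe that the two remaining involutions $\a\b$ and $\a^3\b$ cannot coexist because their product is $\a^2$, and then exhibit the embedding from Figure~\ref{F:H10}. One small remark: your invocation of the Finite Order Theorem to exclude $\a$ is unnecessary---if any homeomorphism $h$ of $(S^3,\Gamma)$ realizes $\a$, then $h^2$ realizes $\a^2$ on the \emph{same} embedding, with no re-embedding needed.
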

\begin{proof}
From the table above, none of the automorphisms of $H_{10}$ are negatively realizable, either because the fixed subgraph does not embed in $S^2$ or due to the Path Lemma.  Hence $H_{10}$ is intrinsically chiral.

From the table, $\a^2$ and $\b$ are not positively realizable (since their fixed subgraphs do not embed in $S^1$), which means $\a$ is also not positively realizable.  So the only nontrivial automorphisms which could be positively realizable are those conjugate to $\a\b$, namely $\a\b$ and $\a^3\b$.  However, the product of these automorphisms is $\a^2$, so no embedding of $H_{10}$ can positively realize both of these automorphisms at the same time.  Hence, $\Z_2$ is the only group which can possibly be positively realized.

The embedding $\Gamma$ of $H_{10}$ in Figure \ref{F:H10} is symmetric under the half-turn rotation around the axis through the edge $\overline{hj}$.  Since no other nontrivial groups are realizable, this means $\TSG(\Gamma) = \TSG_+(\Gamma) = \Z_2$.
\end{proof}

\subsection{The graph $H_{11}$}

The graph $H_{11}$ is obtained from $H_{10}$ (as shown in Figure \ref{F:H10}) by performing a $\nabla Y$-move on the triangle $hcf$, resulting in the graph shown in Figure~\ref{F:H11}.

\begin{figure} [htbp]
$$\scalebox{.8}{\includegraphics{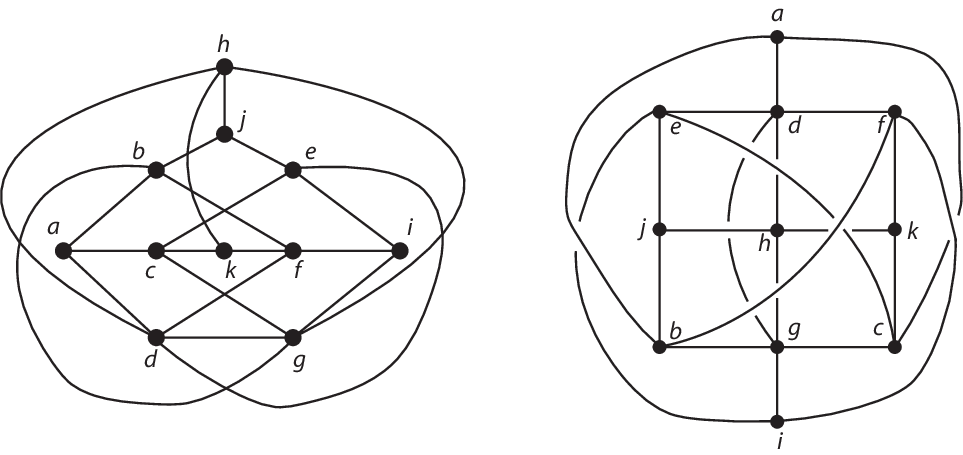}}$$
\caption{The graph $H_{11}$, and an embedding $\Gamma$ of $H_{11}$ with $\TSG(\Gamma) = \TSG_+(\Gamma) = \Z_2$.}
\label{F:H11}
\end{figure}

Every automorphism of $H_{11}$ fixes vertex $h$ (the only vertex of degree 4 adjacent to two vertices of degree 3). Similarly, looking at the degrees of the vertices and their neighbors, every automorphism preserves each of the sets $\{a,i\}$, $\{j, k\}$, $\{d, g\}$ and $\{b, c, e, f\}$. In fact, the permutations of the sets $\{a, i\}$ and $\{j, k\}$ determine the permutation of the other vertices.  Hence $\Aut(H_{11}) = \Z_2 \times \Z_2 = D_2$, and the possible nontrivial permutations of the vertices are: $\alpha = (ai)(be)(cf)(dg)$, $\beta = (jk)(bc)(ef)$, and $\alpha\beta = (ai)(jk)(bf)(ce)(dg)$. The properties of these automorphisms are listed in Table \ref{Ta:H11}.

\begin{table}
\begin{tabular}{|l|c|c|c|c|c|}
\hline
\rule{0pt}{.15in}Automorphism & Order & Fixed subgraph & $\hookrightarrow S^1$ & $\hookrightarrow S^2$ & Path Lemma \\ \hline
\rule{0pt}{.15in}$\alpha = (ai)(be)(cf)(dg)$ & 2 &  \begin{tikzpicture}[rotate=0] 
  \Vertex{a}
  \EA(a){b}    \WE(a){c}   \EA(b){d} 
  \Edges(c,a,b)
   \AddVertexColor{white}{d}
\end{tikzpicture}  & Yes & Yes & $a-b-f-i$ \\ \hline
\rule{0pt}{.15in}$\beta = (jk)(bc)(ef)$ & 2 &  \begin{tikzpicture}[rotate=0] 
  \Vertices{line}{a,d,g,i}
  \NO(d){h}
  \Edges(a,d,g,i) \Edges(d,h,g)
\end{tikzpicture} & No & Yes & $j-b-f-k$ \\ \hline
\rule{0pt}{.15in}$\alpha\beta = (ai)(jk)(bf)(ce)(dg)$ & 2 &  \begin{tikzpicture}[rotate=0] 
  \Vertices{line}{h,x,y,z}
  \AddVertexColor{white}{x,y,z}
\end{tikzpicture}  & Yes & Yes & $a-b-j-e-i$ \\ \hline
\end{tabular}
\caption{Automorphisms of $H_{11}.$}
\label{Ta:H11}
\end{table}

\begin{theorem} \label{T:H11}
The only nontrivial group that is positively realizable for $H_{11}$ is $\Z_2$.  Moreover, $H_{11}$ is intrinsically chiral, so no other groups are realizable.
\end{theorem}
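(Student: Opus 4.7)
The plan is to follow the same pattern as the preceding theorems, relying on Table \ref{Ta:H11} together with the Realizability Lemma and Path Lemma to rule out cases, and then to exhibit an explicit embedding.

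First I would establish intrinsic chirality. Each of the three nontrivial automorphisms $\alpha, \beta, \alpha\beta$ has order $2$, and each has a path witness in Table \ref{Ta:H11} (namely $a\text{-}b\text{-}f\text{-}i$ for $\alpha$, $j\text{-}b\text{-}f\text{-}k$ for $\beta$, and $a\text{-}b\text{-}j\text{-}e\text{-}i$ for $\alpha\beta$) between two interchanged vertices along which no vertex is fixed and no edge is flipped. Each automorphism also fixes more than two points (all three contain vertex $h$ in their fixed subgraphs, along with additional fixed vertices or midpoints). So the Path Lemma applies to each, and none is negatively realizable. Hence $H_{11}$ is intrinsically chiral, which in particular means no group can be realized that is not positively realizable.

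Next I would rule out positive realizability of $\beta$ and of the full group $D_2$. The fixed subgraph of $\beta$ contains the triangle $dhg$ together with the path $a\text{-}d$--$g\text{-}i$, which is not a subset of any circle, so by the Realizability Lemma $\beta$ is not positively realizable. Since $\alpha\cdot(\alpha\beta) = \beta$, no embedding can positively realize both $\alpha$ and $\alpha\beta$ simultaneously; hence the full automorphism group $D_2$ is not positively realizable. The only remaining candidates for a nontrivial positively realizable topological symmetry group are therefore $\Z_2$ (generated by $\alpha$ or by $\alpha\beta$).

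Finally, I would verify that $\Z_2$ is actually positively realized. The embedding $\Gamma$ shown in Figure \ref{F:H11} is invariant under a half-turn rotation about an axis through $h$ that swaps the two pairs $\{a,i\}$ and the degree-$5$ vertices in the natural way, inducing the automorphism $\alpha$ (or a conjugate $\alpha\beta$, depending on which embedding one draws). Combined with the exclusions above, this gives $\TSG_+(\Gamma) = \Z_2$, and by intrinsic chirality $\TSG(\Gamma) = \TSG_+(\Gamma) = \Z_2$. I expect no serious obstacle here; the only mildly delicate point is confirming that the pictured symmetry really realizes one of the two conjugate order-$2$ automorphisms (which can be read off directly from the embedding), and the rest of the argument is bookkeeping with Table \ref{Ta:H11}.
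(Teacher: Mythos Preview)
Your argument follows essentially the same route as the paper's: use the Path Lemma (via Table~\ref{Ta:H11}) to rule out negative realizability of all three nontrivial automorphisms, use the Realizability Lemma to rule out positive realizability of $\beta$, observe that $\alpha$ and $\alpha\beta$ cannot be simultaneously realized since their product is $\beta$, and then point to the embedding in Figure~\ref{F:H11}. One small correction: you refer to $\alpha\beta$ as ``a conjugate'' of $\alpha$, but since $\Aut(H_{11}) \cong D_2$ is abelian, $\alpha$ and $\alpha\beta$ are not conjugate; the paper's embedding specifically realizes $\alpha\beta$, not $\alpha$.
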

\begin{proof}
We can see from Table \ref{Ta:H11} that none of the automorphisms of $H_{11}$ are negatively realizable, due to the Path Lemma, so $H_{11}$ is intrinsically chiral.  We also observe that $\beta$ is not positively realizable, so the largest possible positively realizable group is $\Z_2$ (note that if $\alpha$ and $\alpha\beta$ were both positively realizable for one embedding, then $\beta$ would also be positively realizable, so only one automorphism can be realized at once).

The embedding $\Gamma$ of $H_{11}$ in Figure \ref{F:H11} is symmetric under the half-turn rotation around the point $h$, positively realizing $\alpha\beta$.  Here we imagine all the vertices are in the plane of the page, the edge $\overline{dg}$ is beneath the plane, with its midpoint directly below vertex $h$, and the edges $\overline{ec}$ and $\overline{bf}$ are above the plane, with their midpoints directly above vertex $h$ (and with $\overline{bf}$ above $\overline{ec}$).  Since no other nontrivial groups are realizable, this means $\TSG(\Gamma) = \TSG_+(\Gamma) = \Z_2$.
\end{proof}

\subsection{The graph $H_{12}$}

The graph $H_{12}$ is obtained from $H_{11}$ (as shown in Figure \ref{F:H11}) by performing a $\nabla Y$-move on the triangle $hdg$, resulting in the graph shown in Figure~\ref{F:H12}.

\begin{figure} [htbp]
$$\scalebox{.8}{\includegraphics{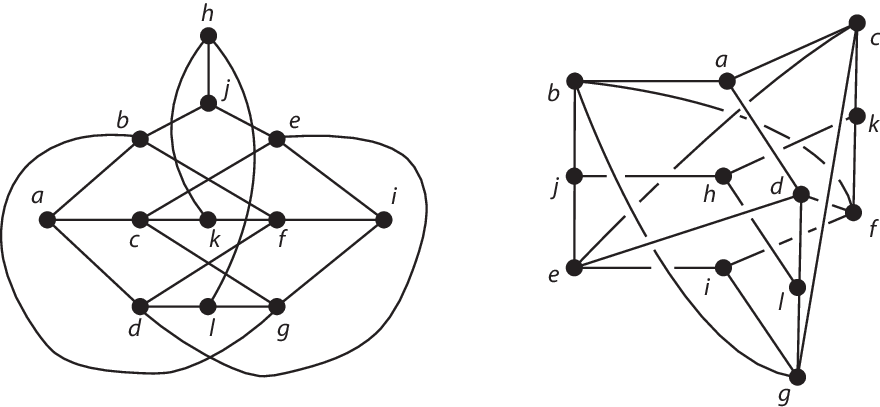}}$$
\caption{The graph $H_{12}$, and an embedding $\Gamma$ of $H_{12}$ with $\TSG(\Gamma) = \TSG_+(\Gamma) = D_3$.}
\label{F:H12}
\end{figure}

Note that any automorphism of $H_{12}$ fixes $h$ (the only vertex of degree 3 all of whose neighbors are also degree 3), so it permutes $\{j, k, l\}$, and also permutes $\{a, i\}$.  The result determines the permutation of the remaining 6 vertices. So $\Aut(H_{12}) = D_3 \times \Z_2$, generated by $\alpha = (jkl)(bcd)(efg)$, $\beta = (ai)(jk)(bf)(ec)(dg)$ and $\gamma = (ai)(be)(cf)(dg)$ ($\alpha$ and $\beta$ generate a subgroup isomorphic to $D_3$, and $\gamma$ commutes with both). The properties of the automorphisms are given in Table \ref{Ta:H12} (we list one automorphism from each conjugacy class).

\begin{table}
\begin{tabular}{|l|c|c|c|c|c|}
\hline
\rule{0pt}{.15in}Automorphism & Order & Fixed subgraph & $\hookrightarrow S^1$ & $\hookrightarrow S^2$ & Path Lemma \\ \hline
\rule{0pt}{.15in}$\a = (jkl)(bcd)(efg)$ & 3 & \begin{tikzpicture}[rotate=0] 
  \Vertices{line}{a,h,i}
\end{tikzpicture}  & Yes & Yes & N/A\textsuperscript{1} \\ \hline
\rule{0pt}{.15in}$\b = (ai)(jk)(bf)(ec)(dg)$ & 2 & \begin{tikzpicture}[rotate=0] 
  \Vertex{a}
  \EA(a){b}    \EA(b){d}  \EA(d){e} 
  \Edges(a,b)
   \AddVertexColor{white}{d,e}
\end{tikzpicture}  & Yes & Yes & $a-c-g-i$ \\ \hline
\rule{0pt}{.15in}$\g = (ai)(be)(cf)(dg)$ & 2 &  \begin{tikzpicture}[rotate=0] 
  \Vertex{f}
  \EA(f){a} \WE(f){b} \SO(f){c}
   \Edges(c,b)
  \Edges(c,a)
    \Edges(c,f)
\end{tikzpicture}  & No & Yes & $a-b-f-i$ \\ \hline
\rule{0pt}{.15in}$\a\g = (ai)(bfdecg)(jkl)$ & 6 & \begin{tikzpicture}[rotate=0] 
  \Vertex{h}
\end{tikzpicture}& Yes & Yes &  N/A\textsuperscript{2} \\ \hline
\rule{0pt}{.15in}$\b\g = (bc)(ef)(jk)$ & 2 &   \begin{tikzpicture}[rotate=0] 
  \Vertex{f}
  \EA(f){a} \WE(f){b} \SO(f){c}  \EA(a){d} \WE(b){e}
   \Edges(c,b)
  \Edges(c,a)
    \Edges(c,f)
     \Edges(a,d)
       \Edges(b,e)
\end{tikzpicture}  & No & Yes & $j-b-f-k$ \\ \hline
\end{tabular}
\caption{Automorphisms of $H_{12}$.}
\label{Ta:H12}
\end{table}

\begin{theorem} \label{T:H12}
The groups $D_3$, $\Z_3$ and $\Z_2$ are positively realizable for $H_{12}$.  Moreover, $H_{12}$ is intrinsically chiral, and there are no other nontrivial groups that are realizable for $H_{12}$.
\end{theorem}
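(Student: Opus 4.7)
The plan is to follow the pattern of Theorems \ref{T:H10} and \ref{T:H11}, reading most of the required information off Table \ref{Ta:H12}. For intrinsic chirality I would step through the five nontrivial conjugacy classes: $\a$ has odd order and so is not negatively realizable; $\b$, $\g$, and $\b\g$ are each ruled out by the Path Lemma, with witnessing paths recorded in the table. For $\a\g$, the key observation is that the $\Z_2$ factor generated by $\g$ is central in $\Aut(H_{12}) = D_3 \times \Z_2$, so $(\a\g)^3 = \a^3\g^3 = \g$; a negative realization of $\a\g$ by $h$ would force $h^3$ to negatively realize $\g$, contradicting the previous case. This establishes intrinsic chirality.

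For the upper bound on positive realizability, the table shows that the fixed subgraphs of $\g$ and $\b\g$ do not embed in $S^1$, so by the Realizability Lemma they are not positively realizable. The same identity $(\a\g)^3 = \g$ then rules out $\a\g$ as well. Consequently $\TSG_+(\Gamma)$ can contain only the identity together with elements in the conjugacy classes of $\a$ and $\b$, and such a subgroup of $D_3 \times \Z_2$ is contained in the $D_3$ factor $\langle \a, \b\rangle$.

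For realization, the embedding $\Gamma$ on the right of Figure \ref{F:H12} is invariant under an order-three rotation about the axis through $a$, $h$, $i$ (realizing $\a$) together with an order-two rotation about an axis through $h$ and $l$ (realizing $\b$), so $D_3 \leq \TSG_+(\Gamma)$, forcing equality. To obtain $\Z_3$ and $\Z_2$ I would invoke the Subgroup Theorem. The only edges pointwise fixed by any nontrivial element of $D_3$ are the three spokes $\overline{hj}$, $\overline{hk}$, $\overline{hl}$: the three-fold rotation's fixed vertex set $\{a,h,i\}$ contains no pair of adjacent vertices, and each of the three order-two elements of $D_3$ fixes exactly the pair $\{h,x\}$ for one $x \in \{j,k,l\}$. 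Any other edge (for instance $\overline{bj}$) is therefore free under the $D_3$-action, and the Subgroup Theorem applied to such an edge produces re-embeddings whose $\TSG_+$ equals each subgroup of $D_3$. The main piece of careful bookkeeping is the adjacency analysis needed for the fixed-edge identification, but this is routine from the $\nabla Y$-history of $H_{12}$ recorded in Figure \ref{F:Heawood}.
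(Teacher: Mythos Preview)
Your argument is correct and follows essentially the same approach as the paper: read off Table~\ref{Ta:H12}, use the Path Lemma and odd order for negative realizability (with the relation $(\a\g)^3=\g$ handling $\a\g$), use the Realizability Lemma plus the same relation for the positive upper bound, realize $D_3$ via the embedding in Figure~\ref{F:H12}, and invoke the Subgroup Theorem. The only cosmetic difference is that the paper picks $\overline{ab}$ as the free edge for the Subgroup Theorem rather than $\overline{bj}$, without your explicit enumeration of the fixed edges $\overline{hj},\overline{hk},\overline{hl}$.
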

\begin{proof}
From Table \ref{Ta:H12}, $\g$, $\b\g$ and their conjugates are neither positively nor negatively realizable.  Since $(\a\g)^3 = \g$, $\a\g$ and its conjugates are not realizable either.  The remaining automorphisms are not negatively realizable (either by the Path Lemma or because they have odd order), so $H_{12}$ is intrinsically chiral. Since only automorphisms conjugate to $\a$ or $\b$ can be realizable (namely, $\a, \a^2, \b, \a\b$ and $\a^2\b$), the only possible topological symmetry groups are subgroups of $D_3$: $D_3$, $\Z_3$ and $\Z_2$.

Figure~\ref{F:H12} shows an embedding of $H_{12}$ which positively realizes $D_3$. Here $\alpha$ is realized by the rotation of order 3 around the axis through $a$, $h$ and $i$, and $\beta$ is realized by the rotation of order 2 around the axis through the edge $\overline{hl}$.  Since the edge $\overline{ab}$ is not fixed by any nontrivial element of the topological symmetry group, we can also positively realize $\Z_3$ and $\Z_2$ by the Subgroup Theorem.
\end{proof}

\subsection{The graph $F_{9}$}

The graph $F_{9}$ is obtained from $H_{8}$ (as shown in Figure \ref{F:H8}) by performing a $\nabla Y$-move on the triangle $dfg$, resulting in the graph shown in Figure~\ref{F:F9}.

\begin{figure} [htbp]
$$\scalebox{.8}{\includegraphics{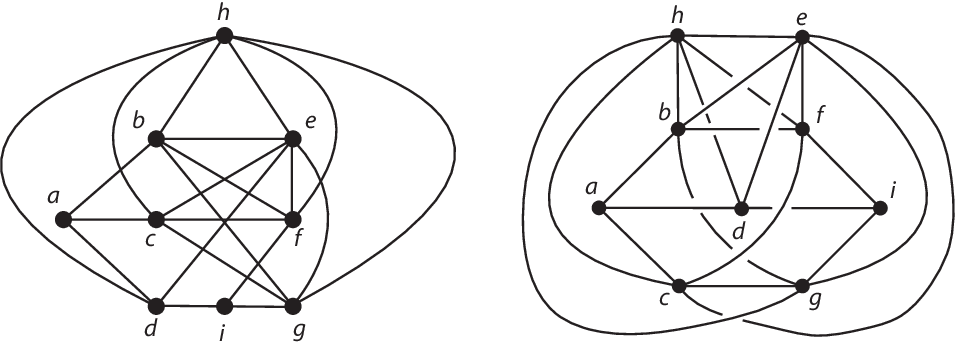}}$$
\caption{The graph $F_{9}$, and an embedding $\Gamma$ of $F_{9}$ with $\TSG(\Gamma) = \TSG_+(\Gamma) = \Z_2$.}
\label{F:F9}
\end{figure}

Every automorphism of $F_9$ fixes $d$ (the only vertex of degree 4), $\{a, i\}$ (the vertices of degree 3), $\{h, e\}$ (the vertices of degree 6) and $\{b, c, f, g\}$ (the vertices of degree 5).  Note that the subgraph of $F_9$ induced by $\{b,c,f,g\}$ is the 4-cycle (or square) $bfcg$.  So every automorphism of $F_9$ must preserve this square.  A permutation of $\{b, c, f, g\}$ determines a permutation of $\{a, i\}$; since $h$ and $e$ are both adjacent to every vertex in the square (along with $d$ and each other), they can be interchanged independent of the permutation of $\{b, c, f, g\}$.  So $\Aut(F_9) \cong D_4 \times \Z_2$, generated by $\alpha = (bfcg)(ai)$, $\beta = (bc)$ and $\gamma = (eh)$. The properties of the automorphisms are given in Table \ref{Ta:F9} (we list one automorphism from each conjugacy class).

\begin{table}
\begin{tabular}{|l|c|c|c|c|c|}
\hline
\rule{0pt}{.15in}Automorphism & Order & Fixed subgraph & $\hookrightarrow S^1$ & $\hookrightarrow S^2$ & Path Lemma \\ \hline
\rule{0pt}{.15in}$\a = (bfcg)(ai)$ & 4 &  \begin{tikzpicture}[rotate=0] 
  \Vertex{k}
  \NO(k){j} \EA(k){l}
  \Edges(j,k,l,j)
\end{tikzpicture} & Yes & Yes & $a-b-f-i$ \\ \hline
\rule{0pt}{.15in}$\a^2 = (bc)(fg)$ & 2 &  \begin{tikzpicture}[rotate=0] 
  \Vertex{k}
  \NO(k){j} \EA(k){l} \NOEA(k){a} \NOWE(k){b}
  \Edges(j,k,l,j)   \Edges(j,a) \Edges(j,b)
\end{tikzpicture}  & No & Yes & $b-f-c$ \\ \hline
\rule{0pt}{.15in}$\b = (bc)$ & 2 & \begin{tikzpicture}[rotate=0] 
  \Vertex{f}
  \WE[unit=1](f){i} \NO(f){e} \SO(f){g} \NO(i){h} \SO(i){d}
  \WE(i){a}
  \Edges(e,i,g,h,f,d,e)
  \Edges(e,h) \Edges(a,d,g) \Edges(i,f) \Edges(e,f)
\end{tikzpicture} & No & No & N/A\textsuperscript{3} \\ \hline
\rule{0pt}{.15in}$\a\b = (bg)(cf)(ai)$ & 2 & \begin{tikzpicture}[rotate=0] 
   \Vertex{a}
  \EA(a){b} \NO(a){c}    \EA(b){d}    \EA(d){e}  
  \Edges(a,b,c,a)
     \AddVertexColor{white}{d,e}
\end{tikzpicture} & No & Yes & $a-c-g-i$ \\ \hline
\rule{0pt}{.15in}$\g = (eh)$ & 2 & \begin{tikzpicture}[rotate=0] 
  \Vertex{f}
  \WE[unit=1](f){i} \NO(f){e} \SO(f){g} \NO(i){h} \SO(i){d} \EA[unit=.5](g){j} 
  \WE[unit=.5](g){k} 
  \Edges(e,d,g,h,f,d,e, i, g)
  \Edges(e,h) \Edges(a,d,g) \Edges(i,f) \Edges(i,g)
      \AddVertexColor{white}{j}
\end{tikzpicture} & No & No & N/A\textsuperscript{3} \\ \hline
\rule{0pt}{.15in}$\a\g = (bfcg)(ai)(eh)$ & 4 &  \begin{tikzpicture}[rotate=0] 
     \Vertices{line}{a,b}
    \AddVertexColor{white}{b}
  
\end{tikzpicture}  & Yes & Yes & N/A\textsuperscript{2} \\ \hline
\rule{0pt}{.15in}$\a^2\g = (bc)(fg)(eh)$ & 2 & \begin{tikzpicture}[rotate=0] 
     \Vertices{line}{a,b,c}
  \Edges(a,b,c)
    \EA(c){d}   
    \AddVertexColor{white}{d}
  
\end{tikzpicture} & Yes & Yes & $b-f-c$ \\ \hline
\rule{0pt}{.15in}$\b\g = (bc)(eh)$ & 2 &  \begin{tikzpicture}[rotate=0] 
     \Vertices{line}{a,b,c,d, e}
  \Edges(a,b,c, d)
    \NO(c){f}   
      \Edges(c,f)
    \AddVertexColor{white}{e}
  
\end{tikzpicture}  & No & Yes & $b-e-c$ \\ \hline
\rule{0pt}{.15in}$\a\b\g = (bg)(cf)(ai)(eh)$ & 2 &  \begin{tikzpicture}[rotate=0] 
  \Vertices{line}{h,x,y,z}
  \AddVertexColor{white}{x,y,z}
\end{tikzpicture} & Yes & Yes & $b-f-i-g$ \\ \hline
\end{tabular}
\caption{Automorphisms of $F_{9}$.}
\label{Ta:F9}
\end{table}

\begin{theorem} \label{T:F9}
The only nontrivial group that is positively realizable for $F_{9}$ is $\Z_2$. Moreover, $F_9$ is intrinsically chiral, so no other groups are realizable.
\end{theorem}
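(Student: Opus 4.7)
The plan is to follow the pattern used for the earlier graphs in the Heawood family: I will first use Table~\ref{Ta:F9} together with Smith Theory, the Path Lemma, and the Realizability Lemma to rule out negative realizability for every nontrivial conjugacy class (establishing intrinsic chirality), then rule out positive realizability for all but a short list of candidates, and finally argue that those candidates cannot coexist in a single $\TSG_+(\Gamma)$.

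For intrinsic chirality, the automorphisms conjugate to $\b$ and $\g$ are excluded by the Realizability Lemma because their fixed subgraphs do not embed in $S^2$. The order-$2$ automorphisms conjugate to $\a^2, \a\b, \a^2\g, \b\g, \a\b\g$ and the order-$4$ automorphism $\a$ are excluded by the Path Lemma using the paths listed in Table~\ref{Ta:F9}. The only case that requires extra work is $\a\g$, whose fixed subgraph has too few points for the Path Lemma. Here I would combine the Finite Order Theorem with Smith Theory: an orientation-reversing finite-order realization $f$ of $\a\g$ cannot have $\fix(f) \cong S^2$ (which would force the order of $\a\g$ to be $2$), so $\fix(f) \cong S^0$, and by the argument in the proof of the Realizability Lemma the order of $f$ equals the order of $\a\g$, namely $4$. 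Then $f^2$ would be an orientation-preserving realization of $(\a\g)^2 = \a^2$, contradicting the Realizability Lemma since $\a^2$'s fixed subgraph does not embed in $S^1$.

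For positive realizability, the Realizability Lemma immediately eliminates the conjugacy classes of $\a^2, \b, \a\b, \g,$ and $\b\g$; this also eliminates $\a$ (since any positive realization of $\a$ would yield one of $\a^2$) and $\a\g$ (since $(\a\g)^2 = \a^2$). The remaining candidates are those conjugate to $\a^2\g$ and $\a\b\g$. The main obstacle is a short group-theoretic check that these cannot be combined in $\TSG_+(\Gamma)$. Since $\a^2\g$ lies in the center of $D_4 \times \Z_2$, its conjugacy class is the singleton $\{\a^2\g\}$. The conjugacy class of $\a\b\g$ is $\{\a\b\g, \a^3\b\g\}$, and the product of these two elements is $\a^2$, which is not positively realizable, so at most one of them can belong to $\TSG_+(\Gamma)$. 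Finally, the product $\a^2\g \cdot \a\b\g = \a^3\b$ is conjugate to $\a\b$ and hence not positively realizable, so $\a^2\g$ and $\a\b\g$ cannot coexist either. Thus $\TSG_+(\Gamma)$ has order at most $2$, and the embedding in Figure~\ref{F:F9} realizes this upper bound, giving $\TSG_+(\Gamma) = \Z_2$.
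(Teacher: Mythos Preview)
Your argument is correct and follows essentially the same route as the paper's proof: use Table~\ref{Ta:F9} with the Realizability Lemma and Path Lemma to kill all negative realizations, reduce the positively realizable automorphisms to the conjugacy classes of $\a^2\g$ and $\a\b\g$, and then check that products of any two such elements are non-realizable. Your only deviation is the treatment of $\a\g$: you pass through the Finite Order Theorem and Smith Theory, while the paper simply notes that $(\a\g)^2=\a^2$ is not realizable, so $\a\g$ cannot be realizable either (the square of any realizing homeomorphism would positively realize $\a^2$); this shortcut also disposes of $\a$ simultaneously and would tighten your write-up.
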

\begin{proof}
We first observe from Table \ref{Ta:F9} that $\b$ and $\g$ are not realizable by the Realizability Lemma.  Moreover, $\a^2$ is not positively realizable by the Realizability Lemma, and is not negatively realizable by the Path Lemma; similarly, $\a\b$ and $\b\g$ are not realizable.  Since $(\a\g)^2 = \a^2$, neither $\a\g$ nor $\a$ are realizable either.  Finally, we see that $\a^2\g$ and $\a\b\g$ are not negatively realizable by the Path Lemma.  So none of the automorphisms are negatively realizable, and hence $F_9$ is intrinsically chiral.

We already know $\a$, $\a^2$, $\b$, $\a\b$, $\g$, $\a\g$ and $\b\g$ (and their conjugates) are not positively realizable. We are left with three automorphisms that may be positively realizable: $\alpha\beta\gamma$, its conjugate $\a^3\b\g$, and $\alpha^2\gamma$ (which is only conjugate to itself), all of order 2.  The product of any two of these yields an unrealizable automorphism, so the only group which may be realizable is $\Z_2$.  Figure \ref{F:F9} shows an embedding of $F_9$ which positively realizes $\Z_2$ (specifically, it realizes the automorphism $\alpha\beta\gamma$ by a $180^\circ$ rotation about the vertical axis). 
\end{proof}

\subsection{The graph $F_{10}$}

The graph $F_{10}$ is obtained from $F_{9}$ (as shown in Figure \ref{F:F9}) by performing a $\nabla Y$-move on the triangle $deh$, resulting in the graph shown in Figure~\ref{F:F10}.

\begin{figure} [htbp]
$$\scalebox{.8}{\includegraphics{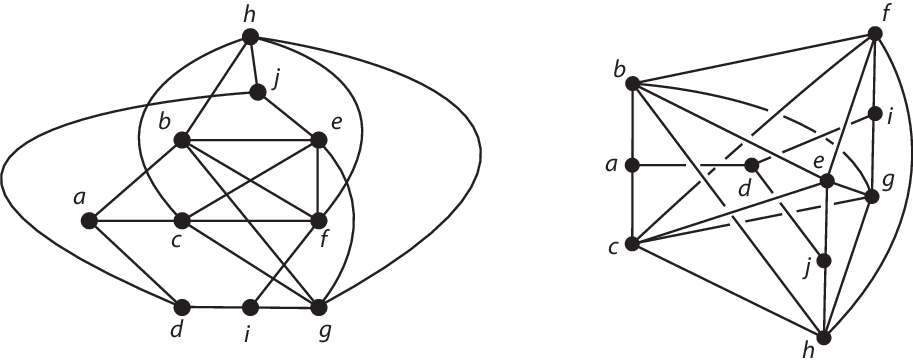}}$$
\caption{The graph $F_{10}$, and an embedding $\Gamma$ of $F_{10}$ with $\TSG(\Gamma) = \TSG_+(\Gamma) = D_3$.}
\label{F:F10}
\end{figure}

Every automorphism of $F_{10}$ fixes $d$ (the only vertex of degree 3 that is adjacent to the other three vertices of degree 3), $\{a, i, j\}$ (the other vertices of degree 3), and $\{b, c, e, h, f, g\}$ (the vertices of degree 5).  Moreover, $a$ is adjacent to $\{b, c\}$, $i$ is adjacent to $\{f, g\}$ and $j$ is adjacent to $\{e,h\}$; so the permutation of $\{a, i, j\}$ determines a permutation of these three pairs of vertices.  Finally, the vertices within each pair can be interchanged (since they have the same neighbours).  So the group of automorphisms is generated by $\a = (aij)(bfe)(cgh)$, $\b = (ai)(bf)(cg)$, $\gamma_a = (bc)$, $\gamma_i = (fg)$ and $\gamma_j = (eh)$.  The automorphisms $\alpha$ and $\beta$ generate a subgroup isomorphic to $D_3$, while $\gamma_a, \gamma_i,$ and $\gamma_j$ generate $(\Z_2)^3$.  The automorphism group is a semidirect product of these two subgroups, with the homeomorphism $\phi: D_3 \rightarrow \Aut((\Z_2)^3)$ defined by $\phi(g)(\gamma_x) = g\gamma_x g^{-1} = \gamma_{g(x)}$. So $\Aut(F_{10}) \cong (\Z_2)^3 \semi D_3$. The 47 nontrivial automorphisms fall into 9 conjugacy classes (where $x, y \in \{a, i, j\}$):
\begin{itemize}
	\item $[\a] = \{\a, \a^2, \a\g_a\g_i, \a\g_a\g_j, \a\g_i\g_j, \a^2\g_a\g_i, \a^2\g_a\g_j, \a^2\g_i\g_j\}$
	\item $[\a\g_a] = \{\a\g_a, \a\g_i, \a\g_j, \a^2\g_a, \a^2\g_i, \a^2\g_j, \a\g_a\g_i\g_j, \a^2\g_a\g_i\g_j\}$
	\item $[\b] = \{\b, \a\b, \a^2\b, \b\g_a\g_i, \a\b\g_a\g_j, \a^2\b\g_i\g_j\}$
	\item $[\b\g_a] = \{\b\g_a, \b\g_i, \a\b\g_a, \a\b\g_j, \a^2\b\g_i, \a^2\b\g_j\}$
	\item $[\b\g_j] = \{\b\g_j, \a\b\g_i, \a^2\b\g_a, \b\g_a\g_i\g_j, \a\b\g_a\g_i\g_j, \a^2\b\g_a\g_i\g_j\}$
	\item $[\b\g_a\g_j] = \{\b\g_a\g_j, \b\g_i\g_j, \a\b\g_a\g_i, \a\b\g_i\g_j, \a^2\b\g_a\g_i, \a^2\b\g_a\g_j\}$
	\item $[\g_a] = \{\g_a, \g_i, \g_j\}$
	\item $[\g_a\g_i] = \{\g_a\g_i, \g_a\g_j, \g_i\g_j\}$
	\item $[\g_a\g_i\g_j] = \{\g_a\g_i\g_j\}$
\end{itemize}

The properties of the automorphisms are given in Table \ref{Ta:F10} (we list one automorphism from each conjugacy class).

\begin{table}
\begin{tabular}{|l|c|c|c|c|c|}
\hline
\rule{0pt}{.15in}Automorphism & Order & Fixed subgraph & $\hookrightarrow S^1$ & $\hookrightarrow S^2$ & Path Lemma \\ \hline
\rule{0pt}{.15in}$\a = (aij)(bfe)(cgh)$ & 3 & \begin{tikzpicture}[rotate=0] 
  \Vertex{d}
\end{tikzpicture} & Yes & Yes & N/A\textsuperscript{1} \\ \hline
\rule{0pt}{.15in}$\a\g_a = (aij)(bghcfe)$ & 6 & \begin{tikzpicture}[rotate=0] 
  \Vertex{d}
\end{tikzpicture} & Yes & Yes & N/A\textsuperscript{1} \\ \hline
\rule{0pt}{.15in}$\b = (ai)(bf)(cg)$ & 2 &  \begin{tikzpicture}[rotate=0] 
  \Vertex{f}
  \EA(f){a} \WE(f){b} \SO(f){c}
   \Edges(c,b)
  \Edges(c,a)
    \Edges(c,f)
   \EA[unit=1](c){x}  \EA[unit=.5](x){y} 
  \AddVertexColor{white}{x,y}
  
\end{tikzpicture} & No & Yes & $a-b-g-i$ \\ \hline
\rule{0pt}{.15in}$\b\g_a = (ai)(bgcf)$ & 4 & \begin{tikzpicture}[rotate=0] 
  \Vertex{f}
  \EA(f){a} \WE(f){b} \SO(f){c}
   \Edges(c,b)
  \Edges(c,a)
    \Edges(c,f)
  
\end{tikzpicture} & No & Yes & $a-b-g-i$ \\ \hline
\rule{0pt}{.15in}$\b\g_j = (ai)(bf)(cg)(eh)$ & 2 &  \begin{tikzpicture}[rotate=0] 
  \Vertex{a}
  \EA(a){b}    \EA(b){d}  \EA(d){e} 
  \Edges(a,b)
   \AddVertexColor{white}{d,e}
\end{tikzpicture}& Yes & Yes & $a-b-g-i$ \\ \hline
\rule{0pt}{.15in}$\b\g_a\g_j = (ai)(bgcf)(eh)$ & 4 &  \begin{tikzpicture}[rotate=0] 
  \Vertex{a}
  \EA(a){b}     \Edges(a,b)
\end{tikzpicture} & Yes & Yes & $a-b-g-i$ \\ \hline
\rule{0pt}{.15in}$\g_a = (bc)$ & 2 & \begin{tikzpicture}[rotate=0] 
  \Vertex{f}
  \WE[unit=1](f){i} \NO(f){e} \SO(f){g} \NO(i){h} \SO(i){d} \EA[unit=.5](g){j} 
  \WE[unit=.5](g){k} 
  \Edges(h,g,k,d,f,h,e,i,f,d,e)
  \Edges(i,g) \Edges(a,d,g) \Edges(i,f) 
      \tikzset{EdgeStyle/.style = {-
,bend right=60}}
\Edge(k)(j)  
\end{tikzpicture}

 & No & No & N/A\textsuperscript{3} \\ \hline
\rule{0pt}{.15in}$\g_a\g_i = (bc)(fg)$ & 2 & \begin{tikzpicture}[rotate=0] 
\Vertex{A}
\EA(A){B} 
\NOEA(B){F} \NOWE(A){G} \SOEA(B){H} \SOWE(A){I}
  \Edges(G,A,B,F)   \Edges(I,A,B,H)
\end{tikzpicture}  & No & Yes & $b-f-c$ \\ \hline
\rule{0pt}{.15in}$\g_a\g_i\g_j = (bc)(fg)(eh)$ & 2 & \begin{tikzpicture}[rotate=0] 
  
    \Vertex{f}
  \EA(f){a} \WE(f){b} \SO(f){c}
   \Edges(c,b)
  \Edges(c,a)
    \Edges(c,f)
   
\end{tikzpicture} 

  & No & Yes & $b-f-c$ \\ \hline
\end{tabular}
\caption{Automorphisms of $F_{10}$.}
\label{Ta:F10}
\end{table}

\begin{theorem} \label{T:F10}
The only nontrivial group that is positively realizable for $F_{10}$ is $\Z_3$.  Moreover, $F_{10}$ is intrinsically chiral, so no other nontrivial group is realizable.
\end{theorem}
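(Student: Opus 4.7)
My plan is to follow the approach used for earlier graphs in the Heawood family: work through Table~\ref{Ta:F10} one conjugacy class at a time, using the Realizability Lemma, the Path Lemma, and powers of automorphisms to eliminate most possibilities on both the positive and negative sides, then exhibit an explicit embedding realizing $\Z_3$.

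For intrinsic chirality, the class $[\g_a]$ fails realizability altogether because its fixed subgraph does not embed in $S^2$, and the Path Lemma handles the negative side of $[\b]$, $[\b\g_a]$, $[\b\g_j]$, $[\b\g_a\g_j]$, $[\g_a\g_i]$, and $[\g_a\g_i\g_j]$. The class $[\a]$ is ruled out by odd order. For $[\a\g_a]$, a direct computation shows that $\a\g_a = (aij)(bghcfe)$, so $(\a\g_a)^3 = \g_a\g_i\g_j$, which lies in a class already excluded from negative realizability; hence $\a\g_a$ is not negatively realizable either. This covers all nine classes, so $F_{10}$ is intrinsically chiral.

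For positive realizability, the Realizability Lemma immediately eliminates the classes whose fixed subgraphs do not embed in $S^1$: $[\b]$, $[\b\g_a]$, $[\g_a]$, $[\g_a\g_i]$, and $[\g_a\g_i\g_j]$. Taking powers yields $(\a\g_a)^3 = \g_a\g_i\g_j$ and $(\b\g_a\g_j)^2 = \g_a\g_i$, neither positively realizable, so $[\a\g_a]$ and $[\b\g_a\g_j]$ are out as well. This leaves only $[\a]$ and $[\b\g_j]$ as candidates. Moreover, $\a$ and any element of $[\b\g_j]$ cannot be simultaneously positively realized, because the product $\a\cdot\b\g_j = (aj)(bech)$ has order $4$ and $(\a\b\g_j)^2 = \g_a\g_j \in [\g_a\g_i]$, a class already excluded.

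The hard part, which will be the main obstacle, is to rule out the remaining possibility that $\TSG_+(\Gamma) = \Z_2$ is generated by a single element of $[\b\g_j]$. By the Isometry Theorem, such a $\Z_2$ would come from a $180^\circ$ rotation of $S^3$ whose fixed great circle passes through the two fixed vertices $d, j$ and through the midpoints of the flipped edges $\overline{bf}$ and $\overline{cg}$. Since the Realizability Lemma only gives a necessary condition here, this step needs a finer argument. My approach will be to combine the Subgraph Lemma applied to the octahedral subgraph $K_{2,2,2}$ induced on the degree-$5$ vertices $\{b,c,e,f,g,h\}$ (every automorphism of $F_{10}$ preserves this subgraph setwise, and the restriction map is injective, since $a,i,j$ are distinguished by their neighbors $\{b,c\}$, $\{f,g\}$, $\{e,h\}$ within this subgraph) with a careful analysis of how the off-axis vertex pairs $\{a, i\}$ and $\{e, h\}$ must lie antipodally on circles perpendicular to the axis, to derive a contradiction. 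Once $[\b\g_j]$ is excluded, only $[\a]$ contributes to $\TSG_+$, so the only nontrivial positively realizable group is $\Z_3$, which is exhibited by the embedding in Figure~\ref{F:F10} built around a $3$-fold rotation axis through $d$ and the centroid of the triangle $aij$.
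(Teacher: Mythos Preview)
Your intrinsic chirality argument matches the paper's and is fine. The problem is everything after that: the theorem statement as printed is in error, and you are trying to prove a false claim. The paper's own proof (and Table~\ref{Ta:TSG}) show that the positively realizable groups for $F_{10}$ are $D_3$, $\Z_3$, and $\Z_2$, not $\Z_3$ alone. The embedding in Figure~\ref{F:F10} realizes $\a$ by a rotation of order~$3$ through $d$ and $\b\g_a\g_i\g_j=(ai)(bg)(cf)(eh)\in[\b\g_j]$ by a rotation of order~$2$ through the edge $\overline{dj}$; together these give $D_3$, and the Subgroup Theorem then yields $\Z_3$ and $\Z_2$.

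Concretely, two steps in your proposal break down. First, your claim that ``$\a$ and any element of $[\b\g_j]$ cannot be simultaneously positively realized'' is not justified: you computed only the single product $\a\cdot\b\g_j$, but products of $\a$ with \emph{other} elements of the conjugacy class $[\b\g_j]$ can (and do) stay inside $[\a]\cup[\b\g_j]$. In particular $\langle \a,\,\b\g_a\g_i\g_j\rangle\cong D_3$ contains only automorphisms in $[\a]$ and $[\b\g_j]$. Second, your ``hard part''---ruling out a lone involution from $[\b\g_j]$ via the Subgraph Lemma on the octahedral $K_{2,2,2}$ and an antipodal-point argument---is attempting to exclude something that actually occurs, so no such argument can succeed. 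The correct conclusion of the positive-realizability half is that the realizable groups are exactly $D_3$, $\Z_3$, $\Z_2$, and that any subgroup of $\Aut(F_{10})$ generated by elements of $[\a]\cup[\b\g_j]$ which is not one of these must contain an element of $[\b\g_a]$ or $[\g_a\g_i]$, hence is not realizable.
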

\begin{proof}
From Table \ref{Ta:F10}, we easily see that none of $\b$, $\b\g_a$, $\g_a$, $\g_a\g_i$ and $\g_a\g_i\g_j$ (and their conjugates) are positively or negatively realizable (by the Realizability Lemma and the Path Lemma).  Also, $\a\g_a$ is not realizable, since $(\a\g_a)^3 = \g_a\g_i\g_j$, which is not realizable.  Similarly, $(\b\g_a\g_j)^2 = \g_a\g_i$, so $\b\g_a\g_j$ is not realizable.  Finally, $\b\g_j$ is not negatively realizable by the Path Lemma, and $\a$ is not negatively realizable since it has odd order.  So no automorphisms are negatively realizable, and $F_{10}$ is intrinsically chiral.

The automorphisms conjugate to $\a$ and $\b\g_j$ are positively realizable. The embedding on the right in Figure \ref{F:F10} realizes both $\a$ (by a rotation of order 3 around the vertical axis through $d$) and $\b\g_a\g_i\g_j = (ai)(bg)(cf)(eh)$ (by a rotation of order 2 around the axis through $\overline{dj}$); together, these generate a group isomorphic to $D_3$. However, including any other automorphism conjugate to either $\a$ or $\b\g_a\g_i\g_j$ (that is not already in the group) generates an automorphism in one of the other conjugacy classes, which are not realizable.  So this embedding positively realizes $D_3$.  Since the edge $\overline{ab}$ (among others) is not pointwise fixed by any automorphism, the subgroups $\Z_3$ and $\Z_2$ are also positively realizable by the Subgroup Theorem.

Any combination of automorphisms from the conjugacy classes of $\a$ and $\b\g_j$ which do not generate $D_3$, $\Z_3$ or $\Z_2$ generate groups containing non-realizable automorphisms conjugate to either $\b\g_a$ or $\g_a\g_i$, so no other groups are realizable.
\end{proof}

\subsection{The graph $E_{10}$}

The graph $E_{10}$ is obtained from $F_{9}$ (as shown in Figure \ref{F:F9}) by performing a $\nabla Y$-move on the triangle $bef$, resulting in the graph shown in Figure~\ref{F:E10}.

\begin{figure} [htbp]
$$\scalebox{.8}{\includegraphics{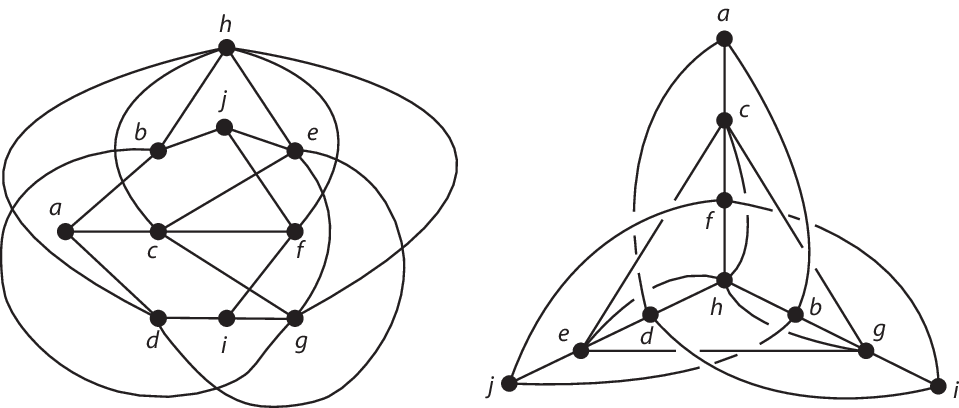}}$$
\caption{The graph $E_{10}$, and an embedding $\Gamma$ of $E_{10}$ with $\TSG(\Gamma) = \TSG_+(\Gamma) = \Z_3$.}
\label{F:E10}
\end{figure}

Any automorphism of $E_{10}$ will fix vertex $h$ (the only vertex of degree 6), and will permute the sets $\{a, i, j\}$ of degree 3 vertices, $\{b, d, f\}$ of degree 4 vertices, and $\{c, e, g\}$ of degree 5 vertices.  Moreover, since each of $c, e, g$ is adjacent to exactly one vertex of degree 3 and one of degree 4, the permutation of $\{c, e, g\}$ determines the permutation of the remaining vertices.  Hence $\Aut(E_{10}) \cong D_3$, generated by $\alpha = (ceg)(aji)(fdb)$ and $\beta = (ce)(aj)(df)$. The properties of the automorphisms are given in Table \ref{Ta:E10} (every nontrivial automorphism is conjugate to $\a$ or $\b$).

\begin{table}
\begin{tabular}{|l|c|c|c|c|c|}
\hline
\rule{0pt}{.15in}Automorphism & Order & Fixed subgraph & $\hookrightarrow S^1$ & $\hookrightarrow S^2$ & Path Lemma \\ \hline
\rule{0pt}{.15in}$\a = (ceg)(aji)(fdb)$ & 3 & \begin{tikzpicture}[rotate=0] 
  \Vertex{d}
\end{tikzpicture} & Yes & Yes & N/A\textsuperscript{1} \\ \hline
\rule{0pt}{.15in}$\b = (ce)(aj)(df)$ & 2 &  \begin{tikzpicture}[rotate=0] 
   \Vertex{a}
  \EA(a){b} \NO(a){c}    \EA(b){d}    \EA(d){e}  
  \Edges(a,b,c, a)   \Edges(b,d)

     \AddVertexColor{white}{e}
\end{tikzpicture} & No & Yes & $a-c-f-j$ \\ \hline
\end{tabular}
\caption{Automorphisms of $E_{10}$.}
\label{Ta:E10}
\end{table}

\begin{theorem} \label{T:E10}
The only nontrivial group that is positively realizable for $E_{10}$ is $\Z_3$.  Moreover, $E_{10}$ is intrinsically chiral, so no other nontrivial group is realizable.
\end{theorem}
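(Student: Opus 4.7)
The plan is to exploit the fact that $\Aut(E_{10}) = D_3$ has only two conjugacy classes of nontrivial elements, represented by $\alpha$ and $\beta$ in Table \ref{Ta:E10}, so essentially all the work reduces to analyzing these two automorphisms. First, I would rule out negative realizability. The automorphism $\alpha = (ceg)(aji)(fdb)$ has order $3$, which is odd, so by the Realizability Lemma it is not negatively realizable. For $\beta = (ce)(aj)(df)$, the fixed subgraph contains more than two points, and the table exhibits the path $a-c-f-j$ between the interchanged vertices $a$ and $j$ that contains no fixed vertices and no flipped edges; so $\beta$ is not negatively realizable by the Path Lemma. Since every nontrivial automorphism is conjugate to $\alpha$ or $\beta$, no nontrivial automorphism is negatively realizable, which proves intrinsic chirality.

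Next I would restrict which automorphisms are positively realizable. From the table, the fixed subgraph of $\beta$ does not embed in $S^1$, so by the Realizability Lemma $\beta$ (and its conjugates) cannot be positively realized. The only remaining candidates are $\alpha$ and $\alpha^2$, which together with the identity form the cyclic subgroup $\Z_3 \leq D_3$. Consequently, the only nontrivial group that can possibly be positively realizable is $\Z_3$.

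Finally, I would exhibit an embedding $\Gamma$ of $E_{10}$ that positively realizes $\Z_3$, as shown on the right in Figure \ref{F:E10}: place $h$ and $d$ on a vertical axis, and arrange the vertices $\{a,i,j\}$, $\{b,d,f\}$ (or the appropriate orbit), and $\{c,e,g\}$ symmetrically around this axis so that a rotation by $2\pi/3$ realizes $\alpha$. Combined with the fact that intrinsic chirality forbids any additional realizable group, this gives $\TSG(\Gamma) = \TSG_+(\Gamma) = \Z_3$ and shows $\Z_3$ is the unique nontrivial realizable group.

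The only mildly delicate step is verifying from the figure in Table \ref{Ta:E10} that the fixed subgraph of $\beta$ really fails to embed in $S^1$ and that $a-c-f-j$ is a valid path avoiding fixed vertices and flipped edges; everything else follows immediately from the Realizability Lemma, the Path Lemma, and exhibiting one symmetric embedding, so there is no serious obstacle.
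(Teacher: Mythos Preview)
Your proposal is correct and follows essentially the same argument as the paper: rule out $\beta$ positively via the Realizability Lemma and negatively via the Path Lemma, rule out $\alpha$ negatively by odd order, and then exhibit a $\Z_3$-symmetric embedding. One small slip: in describing the embedding you place $d$ on the axis, but $d$ lies in the $\alpha$-orbit $(fdb)$ and so cannot be fixed; only $h$ sits on the rotation axis (as the paper's embedding shows).
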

\begin{proof}
From Table \ref{Ta:E10}, $\beta$ is not positively realizable (by the Realizability Lemma) or negatively realizable (by the Path Lemma).  Also, $\a$ is not negatively realizable, since it has odd order, showing that $E_{10}$ is intrinsically chiral.

Figure \ref{F:E10} shows an embedding of $E_{10}$ which realizes the automorphism $\alpha$ by the rotation around the vertex $h$.  Since the only realizable automorphisms are the powers of $\a$, this shows $\Z_3$ is positively realizable, but no other nontrivial group is realizable.
\end{proof}

\subsection{The graph $E_{11}$}

The graph $E_{11}$ is obtained from $E_{10}$ (as shown in Figure \ref{F:E10}) by performing a $\nabla Y$-move on the triangle $cfh$, resulting in the graph shown in Figure~\ref{F:E11}.

\begin{figure} [htbp]
$$\scalebox{.8}{\includegraphics{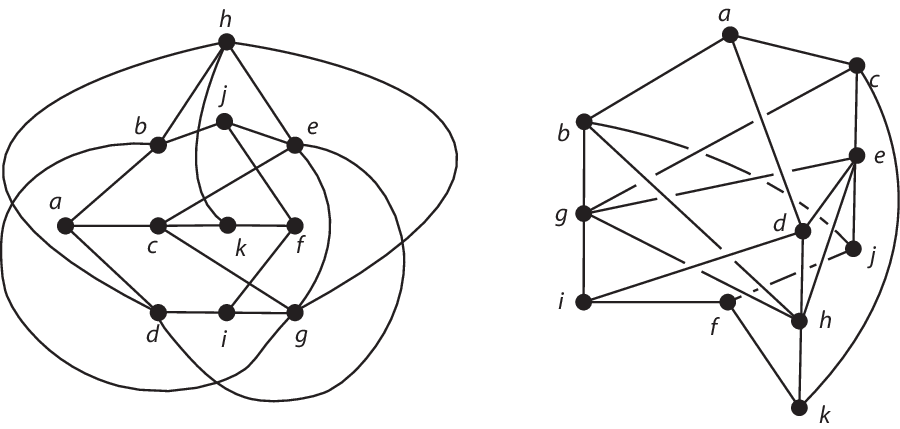}}$$
\caption{The graph $E_{11}$, and an embedding $\Gamma$ of $E_{11}$ with $\TSG(\Gamma) = \TSG_+(\Gamma) = \Z_3$.}
\label{F:E11}
\end{figure}

The vertices of degree 3 are $\{a, f, i, k, j\}$, the vertices of degree 4 are $\{b, c, d\}$ and the vertices of degree 5 are $\{e, g, h\}$.  Observe that $a$ is the only vertex of degree 3 adjacent to three vertices of degree 4, and $f$ is the only vertex of degree 3 adjacent to three other vertices of degree 3.  Hence $a$ and $f$ are both fixed by any automorphism of $E_{11}$.  Since $i, j, k$ are each adjacent to (in addition to $f$) one vertex of degree 4 and one of degree 5, the permutation of $\{i, j, k\}$ determines the permutations of $\{b, c, d\}$ and $\{e, g, h\}$.  So $\Aut(E_{11}) \cong D_3$, generated by $\alpha = (bcd)(geh)(ijk)$ and $\beta = (bd)(ge)(ij)$. The properties of the automorphisms are given in Table \ref{Ta:E11} (every nontrivial automorphism is conjugate to $\a$ or $\b$).

\begin{table}
\begin{tabular}{|l|c|c|c|c|c|}
\hline
\rule{0pt}{.15in}Automorphism & Order & Fixed subgraph & $\hookrightarrow S^1$ & $\hookrightarrow S^2$ & Path Lemma \\ \hline
\rule{0pt}{.15in}$\a = (bcd)(geh)(ijk)$ & 3 &  \begin{tikzpicture}[rotate=0] 
  \Vertex{a}
  \EA(a){b} 
\end{tikzpicture}  & Yes & Yes & N/A\textsuperscript{1} \\ \hline
\rule{0pt}{.15in}$\b = (bd)(ge)(ij)$ & 2 & \begin{tikzpicture}[rotate=0] 
     \Vertices{line}{a,b,c,d, e}
  \Edges(a,b,c, d)
    \NO(c){f}   
      \Edges(c,f)
    \AddVertexColor{white}{e}
  
\end{tikzpicture}   & No & Yes & $b-g-i-d$ \\ \hline
\end{tabular}
\caption{Automorphisms of $E_{11}$.}
\label{Ta:E11}
\end{table}

\begin{theorem} \label{T:E11}
The only nontrivial group that is positively realizable for $E_{11}$ is $\Z_3$.  $E_{11}$ is intrinsically chiral, so no other nontrivial group is realizable.
\end{theorem}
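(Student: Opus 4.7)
The plan is to mirror the argument used for $E_{10}$, since the conjugacy class structure of $\Aut(E_{11}) \cong D_3$ is identical: every nontrivial automorphism is conjugate to either $\a$ (order 3) or $\b$ (order 2), and Table \ref{Ta:E11} already records the necessary data about their fixed subgraphs.

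First I would establish intrinsic chirality by showing that neither conjugacy class representative is negatively realizable. The automorphism $\a$ has odd order, so it is not negatively realizable by the Realizability Lemma. For $\b$, the table exhibits the path $b-g-i-d$ between the interchanged vertices $b$ and $d$, with no intermediate fixed vertex or flipped edge, so the Path Lemma rules out negative realizability. Since every nontrivial automorphism is conjugate to one of these, no automorphism is negatively realizable and $E_{11}$ is intrinsically chiral.

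Next I would narrow the list of positively realizable automorphisms. The fixed subgraph of $\b$ consists of the vertex $a$ together with the path $f - k - c - h$ meeting at $c$ (a tree with a degree-3 vertex), which does not embed in $S^1$; hence $\b$ is not positively realizable by the Realizability Lemma. So the only automorphisms that could be positively realized are $\a$ and $\a^2$, and the largest group that could be positively realized is $\Z_3$.

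Finally, I would exhibit the embedding in Figure \ref{F:E11}, in which $\a$ is induced by the order-3 rotation fixing the axis through $a$ and $f$. This shows $\Z_3$ is positively realizable, and combined with the previous paragraph, $\Z_3$ is the only nontrivial group realizable (positively or otherwise) for $E_{11}$. The only step requiring any care is verifying that the embedding in Figure \ref{F:E11} does indeed have the claimed rotational symmetry, but this is a direct inspection rather than a genuine obstacle.
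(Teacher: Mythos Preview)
Your proposal is correct and follows essentially the same argument as the paper: rule out $\b$ positively via the Realizability Lemma (its fixed subgraph fails to embed in $S^1$) and negatively via the Path Lemma, rule out $\a$ negatively by odd order, and then exhibit the embedding of Figure~\ref{F:E11} realizing $\a$ by a rotation through $a$ and $f$. Your description of the fixed subgraph of $\b$ is slightly garbled (the point is simply that it contains a trivalent vertex, as the table indicates), but the conclusion and overall logic match the paper exactly.
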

\begin{proof}
From Table \ref{Ta:E10}, $\beta$ is not postively realizable (by the Realizability Lemma) or negatively realizable (by the Path Lemma).  Moreover, $\a$ is not negatively realizable, since it has odd order, showing that $E_{11}$ is intrinsically chiral.

Figure \ref{F:E11} shows an embedding of $E_{11}$ which realizes the automorphism $\alpha$ by the rotation around the vertical axis through $a$ and $f$.  Since the only realizable automorphisms are the powers of $\a$, this shows $\Z_3$ is positively realizable, but no other nontrivial group is realizable.
\end{proof}

\subsection{The graph $C_{11}$}

The graph $C_{11}$ is obtained from $E_{10}$ (as shown in Figure \ref{F:E10}) by performing a $\nabla Y$-move on the triangle $ceg$, resulting in the graph shown in Figure~\ref{F:C11}.

\begin{figure} [htbp]
$$\scalebox{.8}{\includegraphics{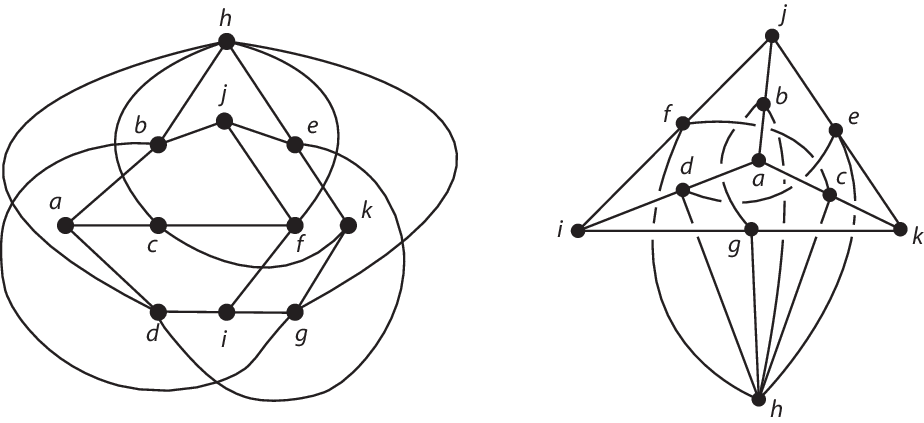}}$$
\caption{The graph $C_{11}$, and an embedding $\Gamma$ of $C_{11}$ with $\TSG(\Gamma) = \TSG_+(\Gamma) = \Z_3$.}
\label{F:C11}
\end{figure}

Since $h$ is the only vertex of degree 6, it is fixed by every automorphism of $C_{11}$.  Similarly, the sets $\{b, c, d, e, f, g\}$ of degree 4 vertices and $\{a, i, j, k\}$ of degree 3 vertices are fixed setwise.  Since each pair of degree 3 vertices have one degree 4 vertex adjacent to both, and it is a different common neighbor for each pair, the permutation of $\{a, i, j, k\}$ determines the permutation of $\{b, c, d, e, f, g\}$.  Since all the transpositions of $\{a, i, j, k\}$ give automorphisms (e.g. the transposition $(ai)$ induces the automorphism $(ai)(bf)(cg)$), we conclude that $\Aut(C_{11}) \cong S_4$, with generators $\a = (ai)(bf)(cg)$ and $\b = (ijk)(dbc)(feg)$, and with relations $\a^2 = \b^3 = (\a\b)^4 = 1$. The properties of the nontrivial automorphisms are given in Table \ref{Ta:C11} (listing one automorphism from each conjugacy class).

\begin{table}
\begin{tabular}{|l|c|c|c|c|c|}
\hline
\rule{0pt}{.15in}Automorphism & Order & fixed subgraph & $\hookrightarrow S^1$ & $\hookrightarrow S^2$ & Path Lemma \\ \hline
\rule{0pt}{.15in}$\a = (ai)(bf)(cg)$ & 2 &  \begin{tikzpicture}[rotate=0] 
  \Vertex{k}
  \NO(k){j} \EA(k){l} \NOEA(k){a} \NOWE(k){b}
  \Edges(j,k,l,j)   \Edges(j,a) \Edges(j,b)
\end{tikzpicture}  & No & Yes & $a-c-f-i$ \\ \hline
\rule{0pt}{.15in}$\b = (ijk)(dbc)(feg)$ & 3 & \begin{tikzpicture}[rotate=0] 
  \Vertex{a}
  \EA(a){b} 
\end{tikzpicture}  & Yes & Yes &N/A\textsuperscript{1} \\ \hline
\rule{0pt}{.15in}$\a\b = (aijk)(bg)(dfec)$ & 4 &   \begin{tikzpicture}[rotate=0] 
     \Vertices{line}{a,b}
    \AddVertexColor{white}{b}
\end{tikzpicture}  & Yes & Yes & N/A\textsuperscript{2} \\ \hline
\rule{0pt}{.15in}$(\a\b)^2 = (aj)(ik)(de)(fc)$ & 2 & \begin{tikzpicture}[rotate=0] 
   \Vertex{a}
  \EA(a){b} \NO(a){c}    \EA(b){d}    \EA(d){e}  
  \Edges(a,b,c,a)
     \AddVertexColor{white}{d,e}
\end{tikzpicture}  & No & Yes & $a-c-k-e-j$ \\ \hline
\end{tabular}
\caption{Automorphisms of $C_{11}.$}
\label{Ta:C11}
\end{table}

\begin{theorem} \label{T:C11}
The only nontrivial group that is positively realizable for $C_{11}$ is $\Z_3$.  Moreover, $C_{11}$ is intrinsically chiral, and no other nontrivial group is realizable.
\end{theorem}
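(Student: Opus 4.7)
The plan is to follow the template set by the earlier proofs (e.g., Theorem \ref{T:F10}), using Table \ref{Ta:C11} together with the Realizability Lemma, the Path Lemma, and the Finite Order Theorem to eliminate the four nontrivial conjugacy classes, and then to exhibit an embedding realizing $\Z_3$.

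First, I would verify intrinsic chirality by ruling out every nontrivial conjugacy class from being negatively realizable. For the two order-2 classes $[\a]$ and $[(\a\b)^2]$, the Path Lemma applies directly using the paths $a-c-f-i$ and $a-c-k-e-j$ recorded in the table. The class $[\b]$ has odd order, so its elements cannot be negatively realizable. For $\a\b$, the key observation is that $(\a\b)^2 = (aj)(ik)(de)(fc)$ has already been shown to be neither positively nor negatively realizable; since any realization of $\a\b$ would, after squaring, give a realization of $(\a\b)^2$ of the same orientation type, it follows that $\a\b$ is also not realizable.

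For positive realizability, Table \ref{Ta:C11} immediately shows that $\a$ and $(\a\b)^2$ are not positively realizable because their fixed subgraphs do not embed in $S^1$, and the observation above then eliminates $\a\b$ as well. So among the nontrivial conjugacy classes, only $[\b]$ -- the eight 3-cycles in $\Aut(C_{11}) \cong S_4$ -- can contain positively realizable elements. To bound the realizable group, I would argue that any two 3-cycles in $S_4$ that are not powers of one another generate a subgroup of $A_4$ strictly larger than $\Z_3$, and every such subgroup contains a double transposition; but double transpositions are conjugate to $(\a\b)^2$ and hence unrealizable. Thus no embedding can simultaneously realize two independent 3-cycles, so $\Z_3$ is the only candidate.

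Finally, the embedding $\Gamma$ in Figure \ref{F:C11} realizes $\b$ as the order-3 rotation about the axis through $h$ (and its antipode in $S^3$), giving $\Z_3 \leq \TSG_+(\Gamma)$; combined with the previous paragraph this forces $\TSG(\Gamma) = \TSG_+(\Gamma) = \Z_3$, and shows that $\Z_3$ is the unique nontrivial realizable group. I expect the only subtle step to be the handling of $\a\b$: one must be careful to use that $(\a\b)^2$ is not \emph{positively} realizable to rule out a negatively realized order-4 symmetry, since an orientation-reversing $h$ of order 4 has orientation-preserving square $h^2$.
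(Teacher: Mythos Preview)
Your argument is correct and follows essentially the same route as the paper's proof: eliminate $\a$ and $(\a\b)^2$ via the Realizability and Path Lemmas, then $\a\b$ via its square, note $\b$ has odd order, and observe that the only subgroups of $S_4$ avoiding the three non-realizable conjugacy classes are cyclic of order~3. The phrase ``of the same orientation type'' in your treatment of $\a\b$ is a slip (the square of an orientation-reversing homeomorphism is orientation-preserving), but you explicitly catch and correct this in your final paragraph, so the logic stands.
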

\begin{proof}
From the Realizability Lemma and the Path Lemma, $\a$ and $(\a\b)^2$ are neither positively nor negatively realizable.  Since $(\a\b)^2$ is not realizable, neither is $\a\b$.  And $\b$ is not negatively realizable, since it has odd order, so $C_{11}$ is intrinsically chiral.

The only subgroups of $S_4$ which do not contain transpositions, products of two disjoint transpositions or 4-cycles are the subgroups isomorphic to $\Z_3$ induced by a 3-cycle, so the only possibly realizable group is $\Z_3$. Figure \ref{F:C11} shows an embedding of $C_{11}$ that positively realizes $\Z_3$, where the automorphism $\b$ is realized by the rotation around the vertical axis through $a$ and $h$.
\end{proof}

\subsection{The graph $C_{12}$}

The graph $C_{12}$ is obtained from $C_{11}$ (as shown in Figure \ref{F:C11}) by performing a $\nabla Y$-move on the triangle $cfh$, resulting in the graph shown in Figure~\ref{F:C12}.

\begin{figure} [htbp]
$$\scalebox{.8}{\includegraphics{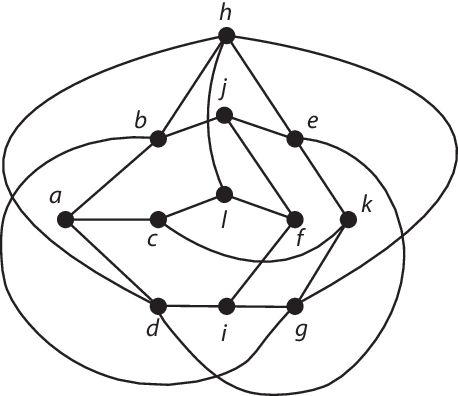}}$$
\caption{The graph $C_{12}$.}
\label{F:C12}
\end{figure}

Every automorphism of $C_{12}$ fixes $h$, the only vertex of degree 5.  Since $l$ is the only vertex of degree 3 adjacent to $h$, it is also fixed by every automorphism.  So the set $\{c, f\}$ (the other vertices adjacent to $l$) is fixed setwise.  The remaining vertices of the graph are contained in a unique 8-cycle $b-a-d-i-g-k-e-j-b$; the subgraph they induce is homeomorphic (in the graph theoretic sense) to a square, where $b, d, g, e$ are the vertices (connected by diagonals $\overline{bg}$ and $\overline{de}$), and $a, i, k, j$ are the midpoints of the sides.  This subgraph has an automorphism group isomorphic to $D_4$.  Since each automorphism of the square subgraph determines the permutation of $\{c, f\}$, the automorphism group of $C_{12}$ is also isomorphic to $D_4$, generated by $\alpha = (aikj)(bdge)(cf)$ and $\beta = (aj)(de)(ik)(cf)$. The properties of the nontrivial automorphisms are given in Table \ref{Ta:C12} (listing one automorphism from each conjugacy class).

\begin{table}
\begin{tabular}{|l|c|c|c|c|c|}
\hline
\rule{0pt}{.15in}Automorphism & Order & Fixed subgraph & $\hookrightarrow S^1$ & $\hookrightarrow S^2$ & Path Lemma \\ \hline
\rule{0pt}{.15in}$\a = (aikj)(bdge)(cf)$ & 4 &  \begin{tikzpicture}[rotate=0] 
  \Vertex{a}
  \EA(a){b}     \Edges(a,b)
\end{tikzpicture}  & Yes & Yes & $c-k-e-j-f$ \\ \hline
\rule{0pt}{.15in}$\a^2 = (ak)(ij)(bg)(de)$ & 2 &  \begin{tikzpicture}[rotate=0] 
  
    \Vertex{f}
  \EA(f){a} \WE(f){b} \SO(f){c}
   \Edges(c,b)
  \Edges(c,a)
    \Edges(c,f)
   
\end{tikzpicture}  & No & Yes & $a-d-i-g-k$ \\ \hline
\rule{0pt}{.15in}$\b = (aj)(de)(ik)(cf)$ & 2 & \begin{tikzpicture}[rotate=0] 
  \Vertex{k}
  \NO(k){j} \EA(k){l} \NOWE(k){b} \EA(l){m} 
  \Edges(j,k,l,j)   \Edges(j,a) \Edges(j,b)
  
      \AddVertexColor{white}{m}
\end{tikzpicture}   
  & No & Yes & $a-c-k-e-j$ \\ \hline
\rule{0pt}{.15in}$\a\b = (bd)(eg)(ij)$ & 2 & \begin{tikzpicture}[rotate=0] 
%
  \GraphInit[vstyle=Normal]
\Vertices{line}{A,B,C,D}
\NO(B){F} \NO(C){G}
  \Edges(A,B,C,D)   \Edges(B,F) \Edges(C,G)
\end{tikzpicture}  & No & Yes & $b-j-e-d$ \\ \hline
\end{tabular}
\caption{Automorphisms of $C_{12}.$}
\label{Ta:C12}
\end{table}

\begin{theorem} \label{T:C12}
No nontrivial group is positively realizable or realizable for $C_{12}$. Hence $C_{12}$ is intrinsically chiral.
\end{theorem}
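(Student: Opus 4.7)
The plan is to go through Table~\ref{Ta:C12} conjugacy class by conjugacy class and rule out both positive and negative realizability for every nontrivial automorphism, which immediately implies no nontrivial group is realizable and that $C_{12}$ is intrinsically chiral.

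First I would handle negative realizability. In each of the four conjugacy classes ($\a$, $\a^2$, $\b$, $\a\b$), the table exhibits a path between two interchanged vertices containing no fixed vertex and no flipped edge, so the Path Lemma directly shows none of the conjugacy classes contains a negatively realizable automorphism. This already gives intrinsic chirality, and it also means any realizable automorphism must be positively realizable.

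Next I would rule out positive realizability. By the Realizability Lemma, an automorphism is positively realizable only if its fixed subgraph embeds in $S^1$. Inspecting the table, the fixed subgraphs of $\a^2$, $\b$, and $\a\b$ all fail to embed in $S^1$ (each contains a vertex of degree $\geq 3$ in the fixed subgraph), so these are not positively realizable. For $\a$, although its own fixed subgraph does embed in $S^1$, the automorphism $\a^2$ is a power of $\a$; if $\a$ were positively realizable by a homeomorphism $h$, then $h^2$ would positively realize $\a^2$, contradicting what we just established. Hence $\a$ is not positively realizable either.

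Since representatives of every conjugacy class of nontrivial automorphisms fail to be realizable, the conjugacy-invariance lemma at the end of Section~\ref{S:background} implies that no nontrivial element of $\Aut(C_{12})$ is realizable. Consequently no nontrivial subgroup of $\Aut(C_{12})$ can be positively realized or realized, and $C_{12}$ is intrinsically chiral. I do not expect any real obstacle here; the hardest part is just verifying that the listed paths genuinely avoid all fixed vertices and flipped edges, which is a mechanical check against the cycle structures given in the table.
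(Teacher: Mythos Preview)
Your proposal is correct and follows essentially the same approach as the paper: use the Realizability Lemma to rule out positive realizability for $\a^2$, $\b$, $\a\b$, use the Path Lemma to rule out negative realizability, and handle $\a$ via its square. The only cosmetic difference is that you apply the Path Lemma directly to $\a$ for negative realizability (which is valid since its fixed subgraph is the edge $\overline{hl}$, hence more than two points), whereas the paper dispenses with $\a$ entirely by noting that $\a^2$ is unrealizable.
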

\begin{proof}
From Table \ref{Ta:C12}, we easily see that $\a^2$, $\b$ and $\a\b$ are neither positively realizable (by the Realizability Lemma) or negatively realizable (by the Path Lemma).  Since $\a^2$ is not realizable, neither is $\a$.  So no nontrivial automorphism of $C_{12}$ is realizable.  Hence $C_{12}$ is intrinsically chiral, and no nontrivial group is realizable.
\end{proof}

\subsection{The graph $C_{13}$}

The graph $C_{13}$ is obtained from $C_{12}$ (as shown in Figure \ref{F:C12}) by performing a $\nabla Y$-move on the triangle $deh$, resulting in the graph shown in Figure~\ref{F:C13}.

\begin{figure} [htbp]
$$\scalebox{.8}{\includegraphics{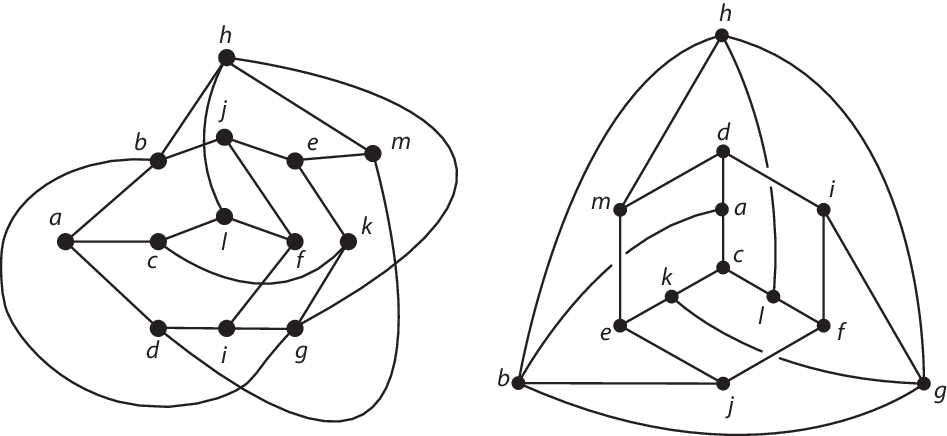}}$$
\caption{The graph $C_{13}$, and an embedding $\Gamma$ of $C_{13}$ with $\TSG(\Gamma) = \TSG_+(\Gamma) = \Z_3$.}
\label{F:C13}
\end{figure}

The graph $C_{13}$ has three vertices of degree 4, $\{b, g, h\}$, and the other ten have degree 3.  Of these ten, $a, i, j, k, l, m$ are adjacent to a vertex of degree 4, while $c, d, e, f$ are only adjacent to vertices of degree 3.  So these two sets of vertices are preserved setwise by every automorphism.  In fact, the permutation of $\{c, d, e, f\}$ determines the permutation of $\{a, i, j, k, l, m\}$ (each of which is adjacent to a different pair of vertices from $\{c, d, e, f\}$), and these determine the permutation of $\{b, g, h\}$.  Since every permutation of $\{c, d, e, f\}$ induces an automorphism on $C_{13}$, we conclude that $\Aut(C_{13}) \cong S_4$, with generators $\a = (cd)(il)(km)(gh)$ and $\b = (def)(imj)(akl)(bgh)$, and with relations $\a^2 = \b^3 = (\a\b)^4 = 1$. The properties of the nontrivial automorphisms are given in Table \ref{Ta:C13} (listing one automorphism from each conjugacy class).

\begin{table}
\begin{tabular}{|l|c|c|c|c|c|}
\hline
\rule{0pt}{.15in}Automorphism & Order & Fixed subgraph & $\hookrightarrow S^1$ & $\hookrightarrow S^2$ & Path Lemma \\ \hline
\rule{0pt}{.15in}$\a = (cd)(il)(km)(gh)$ & 2 & \begin{tikzpicture}[rotate=0] 
     \Vertices{line}{a,b,c,d, e}
  \Edges(a,b,c, d)
    \NO(c){f}   
      \Edges(c,f)
    \AddVertexColor{white}{e}
  
\end{tikzpicture} & No & Yes & $c-l-h-m-d$ \\ \hline
\rule{0pt}{.15in}$\b = (def)(imj)(akl)(bgh)$ & 3 & \begin{tikzpicture}[rotate=0] 
  \Vertex{d}
\end{tikzpicture}  & Yes & Yes & N/A\textsuperscript{1} \\ \hline
\rule{0pt}{.15in}$\a\b = (cdef)(amjl)(bh)$ & 4 & \begin{tikzpicture}[rotate=0] 
  \Vertex{a}
  \EA(a){b}    \EA(b){d}  \EA(d){e} 
  \Edges(a,b)
   \AddVertexColor{white}{e}
\end{tikzpicture} & Yes & Yes & $b-j-e-m-h$\\ \hline
\rule{0pt}{.15in}$(\a\b)^2 = (ce)(df)(aj)(ml)$ & 2 & \begin{tikzpicture}[rotate=0] 
  \Vertex{k}
  \NO(k){j} \EA(k){l} \NOEA(k){a} \NOWE(k){b}
  \Edges(j,k,l,j)   \Edges(j,a) \Edges(j,b)
\end{tikzpicture}    & No & Yes & $a-c-l-f-j$ \\ \hline
\end{tabular}
\caption{Automorphisms of $C_{13}.$}
\label{Ta:C13}
\end{table}

\begin{theorem} \label{T:C13}
The only nontrivial group that is positively realizable for $C_{13}$ is $\Z_3$.  No other nontrivial group is realizable; moreover, $C_{13}$ is intrinsically chiral.
\end{theorem}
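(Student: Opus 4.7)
The plan is to follow essentially the same strategy as was used for the graph $C_{11}$ (Theorem \ref{T:C11}), since $\Aut(C_{13}) \cong S_4$ has the same structure and Table \ref{Ta:C13} records the same pattern of fixed subgraphs and path lemma data. First I would eliminate candidates by ruling out realizability class by class. From Table \ref{Ta:C13}, the automorphisms $\a$ and $(\a\b)^2$ have fixed subgraphs that do not embed in $S^1$, so by the Realizability Lemma they are not positively realizable; the Path Lemma then rules out negative realizability for both, and also for $\a\b$. Since $(\a\b)^2$ is not realizable at all, neither is $\a\b$. Finally, $\b$ has odd order and hence is not negatively realizable. Together these observations handle every conjugacy class, so no nontrivial automorphism is negatively realizable, proving that $C_{13}$ is intrinsically chiral.

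Next I would determine which groups are positively realizable. After the above, the only positively realizable nontrivial automorphisms are those conjugate to $\b$, i.e., the $3$-cycles in $S_4$ (acting as $3$-cycles on $\{c,d,e,f\}$). Any subgroup of $S_4$ containing only the identity and $3$-cycles is isomorphic to $\Z_3$, so $\Z_3$ is the only candidate. To show it is actually realized, I would cite the embedding $\Gamma$ in Figure \ref{F:C13}, in which $\b$ is induced by a rotation of order $3$ about the vertical axis; since the only nontrivial automorphisms of $\Gamma$ are the powers of $\b$, this gives $\TSG(\Gamma) = \TSG_+(\Gamma) = \Z_3$.

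I do not expect any real obstacle here: all of the heavy lifting has been done by the Realizability Lemma, the Path Lemma, and the parity-of-order observation, and Table \ref{Ta:C13} supplies precisely the data needed to invoke them. The closest thing to a subtle point is the group-theoretic step that any subgroup of $S_4$ whose nontrivial elements are all $3$-cycles must be cyclic of order $3$, which is immediate because distinct $3$-cycles in $S_4$ generate either $A_4$ or $S_4$, both of which contain double transpositions (conjugate to $(\a\b)^2$). The rest is a direct transcription of the $C_{11}$ argument with the conjugacy classes of $C_{13}$ substituted in.
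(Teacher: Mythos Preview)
Your proposal is correct and follows essentially the same approach as the paper's proof: eliminate the conjugacy classes of $\a$, $\a\b$, and $(\a\b)^2$ via the Realizability Lemma and Path Lemma (plus the power argument for $\a\b$), note $\b$ has odd order, and conclude that only $\Z_3$ survives, realized by the embedding in Figure~\ref{F:C13}. One small slip: two distinct $3$-cycles in $S_4$ that are not inverses generate $A_4$, not $S_4$ (they are even permutations), but since $A_4$ already contains double transpositions your conclusion is unaffected.
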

\begin{proof}
From Table \ref{Ta:C13}, we see that $\a$ and $(\a\b)^2$ are not realizable (by the Realizability Lemma and the Path Lemma).  Hence $\a\b$ is also not realizable.  Since $\b$ has odd order, it is not negatively realizable, so $C_{13}$ is intrinsically chiral.

The only subgroups of $S_4$ which do not contain transpositions, products of two disjoint transpositions or 4-cycles are the subgroups isomorphic to $\Z_3$ induced by a 3-cycle, so the only possibly realizable group is $\Z_3$. Figure \ref{F:C13} shows an embedding of $C_{13}$ that positively realizes $\Z_3$, generated by the automorphism $\b$. 
\end{proof}

\subsection{The graph $N_{11}$}

The graph $N_{11}$ is obtained from $H_{12}$ (as shown in Figure \ref{F:H12}) by performing a $Y\nabla$-move removing vertex $h$ and adding triangle $jkl$, resulting in the graph shown in Figure~\ref{F:N11}.

\begin{figure} [htbp]
$$\scalebox{.8}{\includegraphics{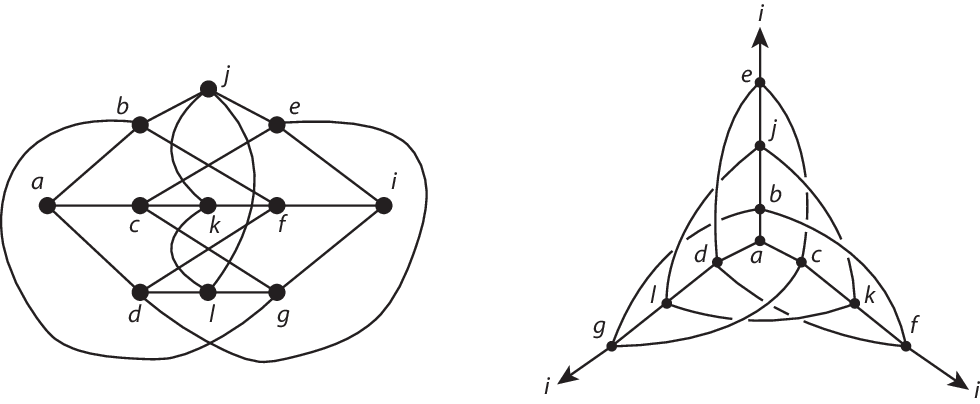}}$$
\caption{The graph $N_{11}$.}
\label{F:N11}
\end{figure}

The automorphisms of $N_{11}$ permute the set $\{a, i\}$ of degree 3 vertices, permute the set $\{j, k, l\}$ of vertices not adjacent to $a$ or $i$, and permute or interchange the sets $\{b, c, d\}$ and $\{e, f, g\}$.  The automorphisms are completely determined by the permutation of $\{j, k, l\}$ and the permutation of $\{a, i\}$.  These permutations commute, so $\Aut(N_{11}) \cong D_3 \times \Z_2$. The properties of the automorphisms are given in Table \ref{Ta:N11}.

\begin{table}
\begin{tabular}{|l|c|c|c|c|c|}
\hline
\rule{0pt}{.15in}Automorphism & Order & Fixed subgraph & $\hookrightarrow S^1$ & $\hookrightarrow S^2$ & Path Lemma \\ \hline
\rule{0pt}{.15in}$\a = (bcd)(efg)(jkl)$ & 3 & \begin{tikzpicture}[rotate=0] 
  \Vertex{a}
  \EA(a){b} 
\end{tikzpicture}  & Yes & Yes & N/A\textsuperscript{1} \\ \hline
\rule{0pt}{.15in}$\b = (bc)(ef)(jk)$ & 2 & \begin{tikzpicture}[rotate=0] 
  \Vertices{line}{a,d,g,i,l,x}
  \Edges(a,d,g,i,l)
     \AddVertexColor{white}{x}
\end{tikzpicture}  & Yes & Yes & $j-b-f-k$ \\ \hline
\rule{0pt}{.15in}$\g = (ai)(be)(cf)(dg)$ & 2 & \begin{tikzpicture}[rotate=0] 
  \Vertex{k}
  \NO(k){j} \EA(k){l}
  \Edges(j,k,l,j)
\end{tikzpicture}  & Yes & Yes & $a-b-f-i$ \\ \hline
\rule{0pt}{.15in}$\a\g = (ai)(bfdecg)(jkl)$ & 6 & $\emptyset$ & Yes & Yes & N/A\textsuperscript{2} \\ \hline
\rule{0pt}{.15in}$\b\g = (ai)(bf)(ec)(dg)(jk)$ & 2 & \begin{tikzpicture}[rotate=0] 
  \Vertices{line}{h,x,y,z}
  \AddVertexColor{white}{x,y,z}
\end{tikzpicture}  & Yes & Yes & $a-c-g-i$ \\ \hline
\end{tabular}
\caption{Automorphisms of $N_{11}$.}
\label{Ta:N11}
\end{table}

\begin{theorem}\label{T:N11}
The only groups that are positively realizable for $N_{11}$ are $D_3 \times \Z_2$ and its subgroups $\Z_6, D_3, \Z_3, D_2$ and $\Z_2$.  Moreover, $N_{11}$ is intrinsically chiral, and no other nontrivial group is realizable.
\end{theorem}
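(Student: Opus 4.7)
The plan is to mirror the template of the earlier theorems in Section~\ref{S:details}: use Table~\ref{Ta:N11} together with the Realizability Lemma and the Path Lemma to rule out every negative realization, then exhibit a single embedding positively realizing the full automorphism group $\Aut(N_{11}) = D_3 \times \Z_2$, and finish by invoking the Subgroup Theorem. Since $\TSG_+(\Gamma) \leq \Aut(N_{11})$ for every embedding, this exhausts the realizable groups, and intrinsic chirality guarantees that every realization is positive.

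For intrinsic chirality, $\a$ has odd order and so is not negatively realizable. Each of the order-two automorphisms $\b$, $\g$, and $\b\g$ has a fixed subgraph with more than two points, and the paths listed in the last column of Table~\ref{Ta:N11} satisfy the hypotheses of the Path Lemma. Only $\a\g$, which has order six and empty fixed subgraph, needs separate treatment; since $\g$ sits in the $\Z_2$ factor of $D_3 \times \Z_2$, the elements $\a$ and $\g$ commute, so $(\a\g)^3 = \a^3 \g^3 = \g$. If $h$ were an orientation-reversing homeomorphism realizing $\a\g$, then the odd power $h^3$ would also be orientation-reversing and would realize $\g$, contradicting what was just shown.

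For positive realizability, I plan to construct an embedding $\Gamma$ symmetric under three isometries of $S^3$: an order-three rotation about a great circle through $a$ and $i$ realizing $\a$, an order-two rotation about the Clifford-orthogonal great circle containing the triangle $jkl$ realizing $\g$ (and swapping $a \leftrightarrow i$), and an order-two rotation about a great circle through $a, d, l, g, i$ and the midpoint of $\overline{jk}$ realizing $\b$. The first two rotations commute, and one verifies that they together with $\b$ satisfy the dihedral relation $\b\a\b = \a^{-1}$, so the three generators give a copy of $D_3 \times \Z_2 \leq \mathrm{SO}(4)$. The six remaining vertices $\{b, c, d, e, f, g\}$ form a single orbit of size six under this action with stabilizer $\langle\b\rangle$, and the edges of $N_{11}$ can then be drawn equivariantly. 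Because $\overline{ab}$ is not pointwise fixed by any nontrivial element of $\TSG_+(\Gamma)$, the Subgroup Theorem supplies positive realizations of each subgroup, namely $\Z_6$, $D_3$, $\Z_3$, $D_2$, and $\Z_2$.

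The main obstacle is the geometric verification of the embedding: one has to show that the three specified rotations actually lie in $\mathrm{SO}(4)$ and satisfy the required relations, and that the $D_3 \times \Z_2$-orbit of a single representative arc for each edge orbit can be drawn in $S^3$ without introducing spurious intersections among the edges. Once this bookkeeping is in place, all remaining claims follow from the tables and lemmas developed in Section~\ref{S:background}.
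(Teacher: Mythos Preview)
Your argument follows the paper's proof almost verbatim: the intrinsic-chirality step (odd order for $\a$, the Path Lemma for $\b,\g,\b\g$, and $(\a\g)^3=\g$ for $\a\g$) and the construction of a $D_3\times\Z_2$-symmetric embedding (order-$3$ rotation through $a,i$; order-$2$ rotation around the circle carrying the triangle $jkl$; order-$2$ rotation through $d,g,l$) are exactly what the paper does.

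There is, however, one genuine slip in your application of the Subgroup Theorem. You assert that $\overline{ab}$ is not pointwise fixed by any nontrivial element of $\TSG_+(\Gamma)$, but the reflection $\a^2\b=(cd)(fg)(kl)$, which is one of the three conjugates of $\b$ in $D_3\times\Z_2$, fixes both $a$ and $b$; in the symmetric embedding its axis is the great circle through $a,b,e,j,i$, and the edge $\overline{ab}$ lies on that axis. Thus $\overline{ab}$ \emph{is} pointwise fixed by a nontrivial element, and the Subgroup Theorem cannot be invoked with that edge. The paper instead uses $\overline{de}$: the only elements of $D_3\times\Z_2$ fixing $d$ are the identity and $\b=(bc)(ef)(jk)$, and $\b$ sends $e$ to $f$, so no nontrivial element fixes $\overline{de}$. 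Replace $\overline{ab}$ by $\overline{de}$ and your proof goes through.
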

\begin{proof}
From Table \ref{Ta:N11} we see that $\a$ is not negatively realizable since it has odd order, and $\b$, $\g$ and $\b\g$ are not negatively realizable by the Path Lemma.  Since $(\a\g)^3 = \g$, the automorphism $\a\g$ is also not negatively realizable.  So none of the automorphisms are negatively realizable, and $N_{11}$ is intrinsically chiral.

The embedding of $N_{11}$ on the right in Figure \ref{F:N11} positively realizes $D_3 \times \Z_2$.  In this embedding, $i$ is embedded at the point in $S^3$ antipodal to $a$, and they are interchanged by the order 2 rotation around the triangle $jkl$, which realizes $\g$. Then $\a$ is realized by rotation around vertex $a$, and $\b$ by rotation about the axis through vertices $d, g, l$. Since the edge $\overline{de}$ is not fixed by any automorphism, the subgroups of $D_3 \times \Z_2$ are also positively realizable by the Subgroup Theorem.  Since $D_3 \times \Z_2$ is the full automorphism group, there are no other realizable groups.
\end{proof}

\subsection{The graph $N_{10}$}

The graph $N_{10}$ is obtained from $N_{11}$ (as shown in Figure \ref{F:N11}) by performing a $Y\nabla$-move removing vertex $i$ and adding triangle $efg$, resulting in the graph shown in Figure~\ref{F:N10}.

\begin{figure} [htbp]
$$\scalebox{.8}{\includegraphics{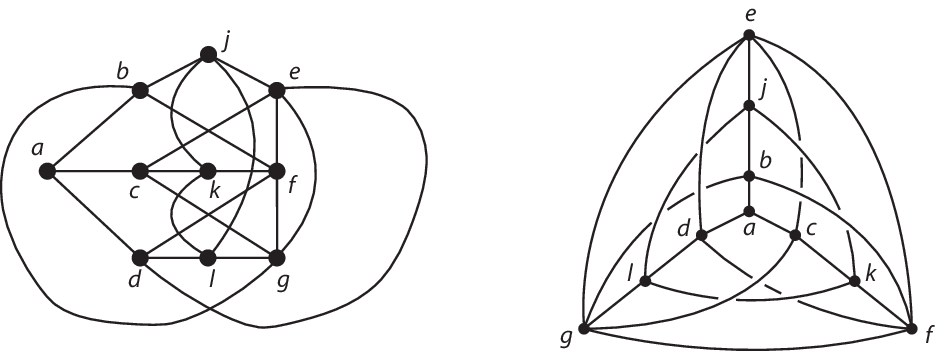}}$$
\caption{The graph $N_{10}$, and an embedding $\Gamma$ of $N_{10}$ with $\TSG(\Gamma) = \TSG_+(\Gamma) = D_3$.}
\label{F:N10}
\end{figure}

Every automorphism of $N_{10}$ fixes vertex $a$ (the only vertex of degree 3), the set $\{b, c, d\}$ (the vertices adjacent to $a$), the set $\{j, k, l\}$ (the other vertices of degree 4) and the set $\{e, f, g\}$ (the vertices of degree 5).  The permutation of $\{j, k, l\}$ determines the permutation of the other vertices, so $\Aut(N_{10}) \cong D_3$. The properties of the automorphisms are given in Table \ref{Ta:N10}.

\begin{table}
\begin{tabular}{|l|c|c|c|c|c|}
\hline
\rule{0pt}{.15in}Automorphism & Order & Fixed subgraph & $\hookrightarrow S^1$ & $\hookrightarrow S^2$ & Path Lemma \\ \hline
\rule{0pt}{.15in}$\a = (bcd)(efg)(jkl)$ & 3 & \begin{tikzpicture}[rotate=0] 
  \Vertex{a}
\end{tikzpicture}   & Yes & Yes & N/A\textsuperscript{1} \\ \hline
\rule{0pt}{.15in}$\b = (bc)(ef)(jk)$ & 2 &  \begin{tikzpicture}[rotate=0] 
  \Vertices{line}{a,d,g,l,x,y}
  \Edges(a,d,g,l)
     \AddVertexColor{white}{x,y}
\end{tikzpicture}& Yes & Yes & $b-f-k-c$ \\ \hline
\end{tabular}
\caption{Automorphisms of $N_{10}$.}
\label{Ta:N10}
\end{table}

\begin{theorem}\label{T:N10}
The only groups that are positively realizable for $N_{10}$ are $D_3$ and its subgroups $\Z_3$ and $\Z_2$.  $N_{10}$ is intrinsically chiral, and no other nontrivial group is realizable.
\end{theorem}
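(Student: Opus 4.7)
The plan is to follow the same strategy as the preceding theorems, using Table \ref{Ta:N10} together with the Realizability Lemma, the Path Lemma, and the Subgroup Theorem. Since $\Aut(N_{10}) \cong D_3$, the only possible nontrivial topological symmetry groups are $D_3$, $\Z_3$ and $\Z_2$, so the task reduces to (1) showing intrinsic chirality by ruling out negative realizability for both conjugacy class representatives, and (2) producing an embedding whose orientation-preserving topological symmetry group is the full $D_3$.

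First I would handle intrinsic chirality. The automorphism $\a = (bcd)(efg)(jkl)$ has odd order $3$, so it is not negatively realizable. The automorphism $\b = (bc)(ef)(jk)$ has fixed subgraph containing more than two points, and the path $b\text{-}f\text{-}k\text{-}c$ in Table \ref{Ta:N10} satisfies the hypotheses of the Path Lemma (no intermediate vertex is fixed and no intermediate edge is flipped), so $\b$ is not negatively realizable either. Since every nontrivial automorphism of $N_{10}$ is conjugate to $\a$ or $\b$, no nontrivial automorphism is negatively realizable, and hence $N_{10}$ is intrinsically chiral.

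Next I would realize $D_3$ positively. The embedding $\Gamma$ shown on the right in Figure \ref{F:N10} is symmetric under a rotation of order $3$ about the vertical axis through vertex $a$, which induces $\a$, and under a rotation of order $2$ about the horizontal axis through vertex $l$, which induces $\b$. Together these generate a subgroup of $\TSG_+(\Gamma)$ isomorphic to $D_3$, which already equals the full automorphism group, so $\TSG_+(\Gamma) = D_3$. By the intrinsic chirality shown above, $\TSG(\Gamma) = \TSG_+(\Gamma) = D_3$ as well.

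Finally, to obtain the subgroups $\Z_3$ and $\Z_2$ as positively realizable groups, I would apply the Subgroup Theorem to the embedding $\Gamma$: one checks that some edge (for instance $\overline{de}$) is not pointwise fixed by any nontrivial element of $\TSG_+(\Gamma) = D_3$, so every subgroup of $D_3$ is positively realizable. No other nontrivial groups can appear, since all such groups are contained in $\Aut(N_{10}) = D_3$. The only mild obstacle is verifying rigorously that the embedding in Figure \ref{F:N10} admits both symmetries simultaneously, but this is a direct geometric check once the embedding is drawn with $a$ on the $3$-fold axis and $l$ on the $2$-fold axis.
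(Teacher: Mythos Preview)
Your proposal is correct and follows essentially the same argument as the paper: you rule out negative realizability of $\alpha$ by its odd order and of $\beta$ by the Path Lemma with the path $b\text{--}f\text{--}k\text{--}c$, realize $D_3$ via the embedding in Figure~\ref{F:N10}, and invoke the Subgroup Theorem using the edge $\overline{de}$. The only cosmetic difference is that the $2$-fold axis in the figure actually passes through the entire fixed path $a,d,g,l$ of $\beta$, not just $l$, but this does not affect the argument.
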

\begin{proof}
From Table \ref{Ta:N10}, we see that $\a$ is not negatively realizable because it has odd order, and $\b$ is not negatively realizable by the Path Lemma.  Hence $N_{10}$ is intrinsically chiral. The embedding of $N_{10}$ on the right in Figure \ref{F:N10} positively realizes $D_3$.  Since the edge $\overline{de}$ is not fixed by any automorphism, the subgroups of $D_3$ are also positively realizable by the Subgroup Theorem.  Since $D_3$ is the full automorphism group, there are no other realizable groups.
\end{proof}

\subsection{The graph $N_{9}$}

The graph $N_{9}$ is obtained from $N_{10}$ (as shown in Figure \ref{F:N10}) by performing a $Y\nabla$-move removing vertex $a$ and adding triangle $bcd$, resulting in the graph shown in Figure~\ref{F:N10}.

\begin{figure} [htbp]
$$\scalebox{.8}{\includegraphics{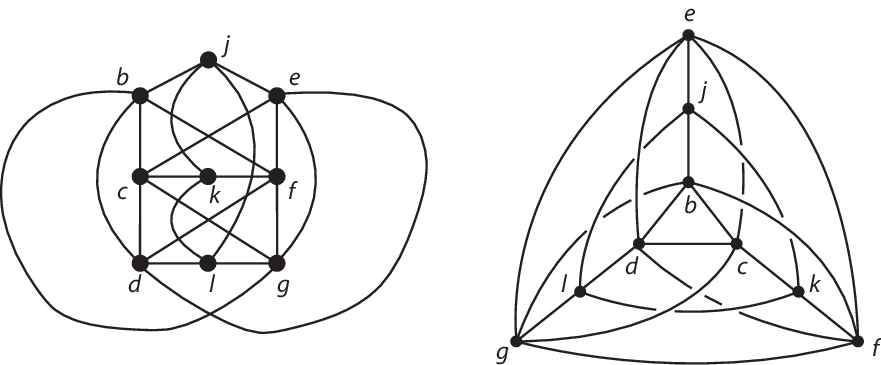}}$$
\caption{The graph $N_{9}$, and an embedding $\Gamma$ of $N_{9}$ with $\TSG(\Gamma) = \TSG_+(\Gamma) = D_3 \times \Z_2$.}
\label{F:N9}
\end{figure}

Every automorphism of $N_9$ preserves the set $\{j,k,l\}$ of degree 4 vertices and the set $\{b,c,d,e,f,g\}$ of degree 5 vertices.  Morever, the permutation of $\{j, k, l\}$ permutes the three pairs $\{b,e\}$, $\{c,f\}$, $\{d,g\}$; the vertices in each pair can also be interchanged.  So the automorphism group is generated by the permutations $\alpha = (jkl)(bcd)(efg)$, $\beta = (cd)(lk)(gf)$, $\gamma_j = (be)$, $\gamma_k = (cf)$ and $\gamma_l = (dg)$.  Automorphisms $\alpha$ and $\beta$ generate a subgroup isomorphic to $D_3$, while $\gamma_j, \gamma_k,$ and $\gamma_l$ generate $(\Z_2)^3$.  The automorphism group is a semidirect product of these two subgroups, with the homeomorphism $\phi: D_3 \rightarrow \Aut((\Z_2)^3)$ defined by $\phi(g)(\gamma_x) = g\gamma_x g^{-1} = \gamma_{g(x)}$. So $\Aut(N_9) \cong (\Z_2)^3\semi D_3$. The properties of the automorphisms are given in Table \ref{Ta:N9} (we list one automorphism from each conjugacy class, as in Table \ref{Ta:F10}).

\begin{table}
\begin{tabular}{|l|c|c|c|c|c|}
\hline
\rule{0pt}{.15in}Automorphism & Order & Fixed subgraph & $\hookrightarrow S^1$ & $\hookrightarrow S^2$ & Path Lemma \\ \hline
\rule{0pt}{.15in}$\a = (jkl)(bcd)(efg)$ & 3 & $\emptyset$ & Yes & Yes & N/A\textsuperscript{1} \\ \hline
\rule{0pt}{.15in}$\a\g_j = (jkl)(bfgecd)$ & 6 & $\emptyset$ & Yes & Yes & N/A\textsuperscript{1} \\ \hline
\rule{0pt}{.15in}$\b = (cd)(lk)(gf)$ & 2 &  \begin{tikzpicture}[rotate=0] 
  \Vertices{line}{b,e,j,x,y,z}
  \Edges(b,e,j)
     \AddVertexColor{white}{x,y,z}
\end{tikzpicture} & Yes & Yes & $c-g-l-d$ \\ \hline
\rule{0pt}{.15in}$\b\g_k = (cgfd)(lk)$ & 4 & \begin{tikzpicture}[rotate=0] 
  \Vertices{line}{b,e,j,x}
  \Edges(b,e,j)
     \AddVertexColor{white}{x}
\end{tikzpicture}  & Yes & Yes & $l-d-c-k$ \\ \hline
\rule{0pt}{.15in}$\b\g_j = (be)(cd)(lk)(gf)$ & 2 &  \begin{tikzpicture}[rotate=0] 
  \Vertices{line}{j,x,y,z}
     \AddVertexColor{white}{x,y,z}
\end{tikzpicture}   & Yes & Yes & $c-g-l-d$ \\ \hline
\rule{0pt}{.15in}$\b\g_j\g_k = (be)(cgfd)(lk)$ & 4 & \begin{tikzpicture}[rotate=0] 
  \Vertices{line}{j,x}
     \AddVertexColor{white}{x}
\end{tikzpicture} & Yes & Yes & N/A\textsuperscript{2} \\ \hline
\rule{0pt}{.15in}$\g_j = (be)$ & 2 & 
\begin{tikzpicture}[rotate=90] 
  \Vertex{f}
  \WE[unit=1](f){i} \NO(f){e} \SO(f){g} \NO(i){h} \SO(i){d} \SOWE(g){l}
  \Edges(e,i,g,h,f,d,e)
  \Edges(e,h)  \Edges(i,f) \Edges(d,g)
  
  \tikzset{EdgeStyle/.style = {-
,bend left=75}}
\Edge(f)(l)
  
    \tikzset{EdgeStyle/.style = {-
,bend right=75}}
\Edge(i)(l)
    
\end{tikzpicture} 
 & No & No & N/A\textsuperscript{3} \\ \hline
\rule{0pt}{.15in}$\g_j\g_k = (be)(cf)$ & 2 & \begin{tikzpicture}[rotate=0] 
  \Vertex{k}
  \NO(k){j} \EA(k){l} \NOEA(k){a} \NOWE(k){b}
  \Edges(j,k,l,j)   \Edges(j,a) \Edges(j,b)
\end{tikzpicture}    & No & Yes & $b-c-e$ \\ \hline
\rule{0pt}{.15in}$\g_j\g_k\g_l = (be)(cf)(dg)$ & 2 &  \begin{tikzpicture}[rotate=0] 
  \Vertex{k}
  \NO(k){j} \EA(k){l}
  \Edges(j,k,l,j)
\end{tikzpicture}   & Yes & Yes & $b-c-e$ \\ \hline
\end{tabular}
\caption{Automorphisms of $N_{9}$.}
\label{Ta:N9}
\end{table}

\begin{theorem}\label{T:N9}
The only groups that are positively realizable for $N_9$ are $D_3 \times \Z_2$ and its subgroups $\Z_6, D_3, \Z_3, D_2$ and $\Z_2$.  Moreover, $N_9$ is intrinsically chiral, and no other nontrivial group is realizable.
\end{theorem}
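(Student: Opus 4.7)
The plan follows the strategies used for $F_{10}$ (which has the same abstract automorphism group $(\Z_2)^3 \semi D_3$) and for $N_{11}$, and proceeds in four parts.

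First, I will establish intrinsic chirality by inspecting Table \ref{Ta:N9}. Most conjugacy class representatives fall immediately: $\a$ has odd order; $\b$, $\b\g_k$, $\b\g_j$, $\g_j\g_k$, and $\g_j\g_k\g_l$ are ruled out by the Path Lemma; and $\g_j$ is not realizable at all, since its fixed subgraph does not embed in $S^2$. For $\a\g_j$, I observe that $(\a\g_j)^3 = \g_j\g_k\g_l$, so if $\a\g_j$ were realized by an orientation-reversing finite-order homeomorphism $h$, then $h^3$ would be orientation-reversing and would realize $\g_j\g_k\g_l$, contradicting the Path Lemma just applied to that element. For $\b\g_j\g_k$ (order $4$, with a fixed subgraph of only two points, so the Path Lemma does not apply directly), I use $(\b\g_j\g_k)^2 = \g_k\g_l$: a negatively-realizing $h$ of order $4$ would yield an orientation-preserving $h^2$ realizing $\g_k\g_l$, but the fixed subgraph of $\g_k\g_l$ (a conjugate of $\g_j\g_k$) does not embed in $S^1$, contradicting the Realizability Lemma.

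Second, I will exhibit the embedding shown on the right of Figure \ref{F:N9}, which realizes $D_3 \times \Z_2$. Place the triangle $jkl$ so that a $120^\circ$ rotation about its center realizes $\a$, a $180^\circ$ rotation about an axis through one of its vertices realizes $\b$, and a commuting $180^\circ$ rotation about an orthogonal great circle of $S^3$ (which exchanges the triangles $bcd$ and $efg$ while fixing $jkl$ setwise) realizes the central involution $\g_j\g_k\g_l$. Since the edge $\overline{bc}$ is not pointwise fixed by any nontrivial element of this group, the Subgroup Theorem then yields positive realizations of all subgroups $\Z_6, D_3, \Z_3, D_2$, and $\Z_2$.

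Finally, to rule out any other positively realizable group, I note that the positively realizable automorphisms of $N_9$ lie in $\{1, \g_j\g_k\g_l\} \cup [\a] \cup [\a\g_j] \cup [\b] \cup [\b\g_j]$: every other conjugacy class is eliminated either directly by the Realizability Lemma (as with $[\g_j]$ and $[\g_j\g_k]$, whose fixed subgraphs do not embed in $S^1$) or by the same power-reduction trick used above (as with $[\b\g_k]$ and $[\b\g_j\g_k]$, whose squares land in $[\g_j\g_k]$). Mirroring the final paragraph of the proof of Theorem \ref{T:F10}, any combination of realizable-class representatives beyond those lying in a conjugate of $D_3 \times \Z_2$ produces an element of one of the non-realizable classes $[\b\g_k]$, $[\b\g_j\g_k]$, $[\g_j]$, or $[\g_j\g_k]$. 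A case analysis on pairs of generators then confirms that $D_3 \times \Z_2$ and its listed subgroups are the only positively realizable groups. The main obstacle is this exclusion step: it is conceptually parallel to the $F_{10}$ analysis, but requires checking products of representatives from the several realizable conjugacy classes to verify that no larger positively realizable subgroup can arise.
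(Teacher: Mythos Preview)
Your proposal is correct and follows essentially the same route as the paper: the chirality argument via Table~\ref{Ta:N9}, the power reductions $(\a\g_j)^3=\g_j\g_k\g_l$ and $(\b\g_k)^2=(\b\g_j\g_k)^2\in[\g_j\g_k]$, the embedding of Figure~\ref{F:N9} realizing $D_3\times\Z_2$, and the Subgroup Theorem all match (the paper uses the edge $\overline{de}$ rather than $\overline{bc}$, but either works). The only substantive difference is in the final exclusion step: the paper does not carry out the case analysis you sketch but instead reports a \emph{Sage} verification that every subgroup of $\Aut(N_9)\cong(\Z_2)^3\semi D_3$ not isomorphic to a subgroup of $D_3\times\Z_2$ contains a non-realizable automorphism, whereas you propose to do this by hand in parallel with the $F_{10}$ argument.
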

\begin{proof}
From Table \ref{Ta:N9}, we see that $\g_j$ and $\g_j\g_k$ are not realizable (by the Realizability Lemma and the Path Lemma), $\b$, $\b\g_k$, $\b\g_j$ and $\g_j\g_k\g_l$ are not negatively realizable by the Path Lemma, and $\a$ is not negatively realizable because it has odd order.  Since $(\a\g_j)^3 = \g_j\g_k\g_l$, it is also not negatively realizable.  Finally, $(\b\g_k)^2 = (\b\g_j\g_k)^2 = \g_k\g_l$, which is in the conjugacy class of $\g_j\g_k$, so it is not realizable; therefore $\b\g_k$ and $\b\g_j\g_k$ are also not realizable.  Hence no automorphisms are negatively realizable, so $N_9$ is intrinsically chiral.

The only positively realizable automorphisms are those conjugate to $\a$, $\a\g_j$, $\b$, $\b\g_j$, and $\g_j\g_k\g_l$. The embedding shown on the right in Figure \ref{F:N9} positively realizes $D_3 \times \Z_2$, generated by $\a$ (the rotation of order 3 around the center), $\b$ (the rotation of order 2 around the axis through vertices $e, j, b$), and $\g_j\g_k\g_l$ (the rotation of order 2 around the triangle $jkl$).  Including any other automorphisms generates a group that contains non-realizable automorphisms, so the topological symmetry group is $D_3 \times \Z_2$. Since the edge $\overline{de}$ is not fixed by any element of the group, every subgroup of $D_3 \times \Z_2$ is positively realizable by the Subgroup Theorem. Using {\em Sage} \cite{sage}, we verified that any subgroup of $\Aut(N_9)$ not isomorphic to a subgroup of $D_3\times \Z_2$ must contain non-realizable automorphisms, so these are the only realizable topological symmetry groups.
\end{proof}

\subsection{The graph $N'_{12}$}

The graph $N'_{12}$ is obtained from $C_{13}$ (as shown in Figure \ref{F:C13}) by performing a $Y\nabla$-move removing vertex $f$ and adding triangle $ijl$, resulting in the graph shown in Figure~\ref{F:N12prime}.

\begin{figure} [htbp]
$$\scalebox{.8}{\includegraphics{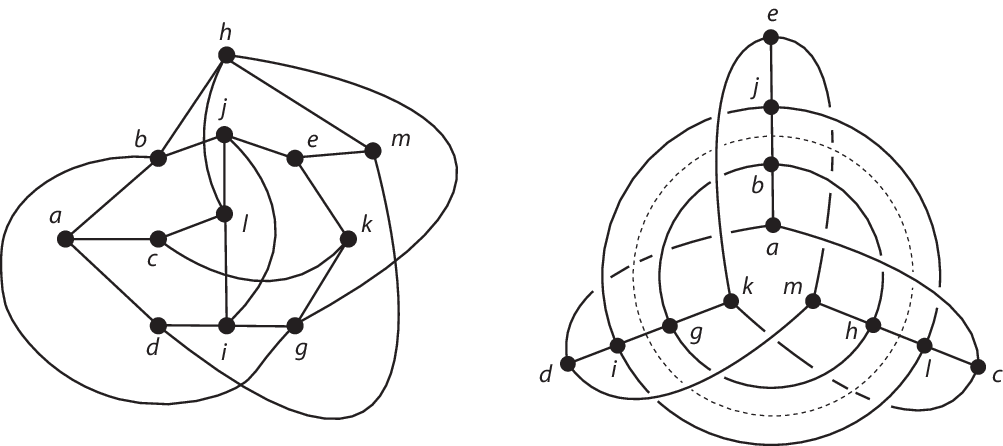}}$$
\caption{The graph $N'_{12}$, and an embedding $\Gamma$ of $N'_{12}$ with $\TSG(\Gamma) = \TSG_+(\Gamma) = D_6$.}
\label{F:N12prime}
\end{figure}

The graph $N'_{12}$ has six vertices of degree 3.  These form a 6-cycle in the graph: $a-c-k-e-m-d-a$.  This 6-cycle must be preserved by any automorphism of the graph, so the automorphism group of the graph is a subgroup of $D_6$.  Since every automorphism of the 6-cycle can be realized by an automorphism of the graph, we conclude $\Aut(N'_{12}) \cong D_6$, generated by $\alpha = (ackemd)(blgjhi)$ and $\beta = (cd)(li)(hg)(mk)$. The properties of the automorphisms are given in Table \ref{Ta:N12prime}.

\begin{table}
\begin{tabular}{|l|c|c|c|c|c|}
\hline
\rule{0pt}{.15in}Automorphism & Order & Fixed subgraph & $\hookrightarrow S^1$ & $\hookrightarrow S^2$ & Path Lemma \\ \hline
\rule{0pt}{.15in}$\a = (ackemd)(blgjhi)$ & 6 & $\emptyset$ & Yes & Yes & N/A\textsuperscript{1} \\ \hline
\rule{0pt}{.15in}$\a^2 = (akm)(ced)(bgh)(lji)$ & 3 & $\emptyset$ & Yes & Yes & N/A\textsuperscript{1} \\ \hline
\rule{0pt}{.15in}$\a^3 = (ae)(cm)(kd)(bj)(lh)(gi)$ & 2 & \begin{tikzpicture}[rotate=0] 
  \Vertices{line}{x,y,z}
     \AddVertexColor{white}{x,y,z}
\end{tikzpicture} & Yes & Yes & $a-c-k-e$ \\ \hline
\rule{0pt}{.15in}$\b = (cd)(li)(hg)(mk)$ & 2 & \begin{tikzpicture}[rotate=0] 
  \Vertices{line}{a,b,e,j}
  \Edges(a,b,e,j)
\end{tikzpicture} & Yes & Yes & $c-k-g-i-d$ \\ \hline
\rule{0pt}{.15in}$\a\b = (ac)(dk)(em)(bl)(gi)(hj)$ & 2 &  \begin{tikzpicture}[rotate=0] 
  \Vertices{line}{x,y,z}
     \AddVertexColor{white}{x,y,z}
\end{tikzpicture}  & Yes & Yes & $d-a-b-g-k$ \\ \hline
\end{tabular}
\caption{Automorphisms of $N'_{12}.$}
\label{Ta:N12prime}
\end{table}

\begin{theorem}\label{T:N12prime}
The only groups that are positively realizable for $N'_{12}$ are $D_6$ and its subgroups $\Z_6$, $D_3$, $\Z_3$, $D_2$ and $\Z_2$.  $N'_{12}$ is intrinsically chiral, and no other nontrivial group is realizable.
\end{theorem}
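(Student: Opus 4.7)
The plan is to combine the data in Table \ref{Ta:N12prime} with a short Smith-theoretic reduction on $\a$ to establish intrinsic chirality, then produce a single explicit embedding (as shown in Figure \ref{F:N12prime}) that positively realizes the full automorphism group $D_6$, and finally invoke the Subgroup Theorem to sweep up every subgroup in one stroke.

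For intrinsic chirality I will work through the five conjugacy-class representatives from Table \ref{Ta:N12prime}. First, $\a^2$ has odd order $3$ and so is not negatively realizable by the Realizability Lemma. Next, each of $\a^3$, $\b$, and $\a\b$ fixes more than two points, and Table \ref{Ta:N12prime} records for each of them an explicit path between interchanged vertices containing no fixed vertex and no flipped edge, so the Path Lemma rules out negative realizability. The remaining case, $\a$ of order $6$, does not submit to the Path Lemma directly since its fixed subgraph is empty, so here is where the proof needs a little care: if $\a$ were negatively realizable, the Finite Order Theorem together with the proof of the Realizability Lemma forces a finite-order orientation-reversing $h$ of the same order $6$ with $\fix(h)\cong S^0$ (the $S^2$ case would force $\a$ to have order dividing $2$). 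But then $h^3$ is an orientation-reversing involution realizing $\a^3$, and by the Path Lemma entry for $\a^3$ this is impossible. Hence $\a$ is also not negatively realizable, completing the proof that $N'_{12}$ is intrinsically chiral.

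For positive realizability I will exhibit the embedding $\Gamma$ on the right of Figure \ref{F:N12prime}, where $\a$ is induced by the order-$6$ rotation permuting the $6$-cycle $a\text{-}c\text{-}k\text{-}e\text{-}m\text{-}d$ about the central vertical axis and $\b$ is induced by a half-turn about a horizontal axis through the midpoint of the triangle $ijl$ and the opposite side, so that $\langle \a,\b\rangle \cong D_6$ sits inside $\TSG_+(\Gamma)$. Since $D_6$ is the full automorphism group, in fact $\TSG_+(\Gamma)=\TSG(\Gamma)=D_6$, giving the top of the lattice. All other subgroups of $D_6$, namely $\Z_6$, $D_3$, $\Z_3$, $D_2$ and $\Z_2$, follow immediately from the Subgroup Theorem once I verify that some edge of $\Gamma$ (for instance an edge joining the $6$-cycle to the central triangle $ijl$) is not pointwise fixed by any nontrivial element of $D_6$; this is clear from the vertex orbit picture.

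The main obstacle I expect is the case of $\a$, since its fixed subgraph is empty and so the Path Lemma has nothing to say; the key trick will be the squaring-down to $\a^3$ combined with the observation that $\a^3$ is blocked by the Path Lemma. Everything else is routine: the intrinsic chirality reduces to reading the table, the $D_6$-realization is a direct verification on Figure \ref{F:N12prime}, and since $\Aut(N'_{12})=D_6$ there is no room for additional realizable groups beyond the subgroups of $D_6$.
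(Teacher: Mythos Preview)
Your overall strategy matches the paper's exactly: rule out negative realizability class by class (odd order for $\a^2$, Path Lemma for $\a^3,\b,\a\b$, and reduce $\a$ to $\a^3$), then exhibit the embedding of Figure~\ref{F:N12prime} realizing $D_6$, and finish with the Subgroup Theorem. Your treatment of $\a$ is more elaborate than necessary---the paper simply observes that if $h$ is orientation-reversing and realizes $\a$, then $h^3$ is orientation-reversing and realizes $\a^3$, with no need to invoke finite order or analyze $\fix(h)$---but your version is not wrong.

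There are, however, two concrete errors in the details. First, your description of the isometries on the embedding of Figure~\ref{F:N12prime} does not match that embedding: in the paper's picture $\a$ is a \emph{glide rotation} (an order-$3$ rotation about the center composed with an order-$2$ rotation about the circle through the midpoints of $\overline{bj},\overline{hl},\overline{gi}$), not a simple order-$6$ rotation about a single axis; and $\b$ is the half-turn about the axis through the vertices $a,b,e,j$ (its fixed subgraph), not an axis through the triangle $ijl$.

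Second, and more seriously, your proposed edge for the Subgroup Theorem fails. Each degree-$3$ vertex has exactly one neighbor outside the $6$-cycle, and the edges from the $6$-cycle to $\{i,j,l\}$ are precisely $\overline{cl},\overline{ej},\overline{di}$. But $\b$ fixes both $e$ and $j$, so $\overline{ej}$ is pointwise fixed by $\b$; by conjugating, each of $\overline{cl}$ and $\overline{di}$ is pointwise fixed by a reflection in $D_6$. So no edge of the type you suggest satisfies the hypothesis of the Subgroup Theorem. The paper instead uses $\overline{bh}$, which one checks is not fixed by any nontrivial element.
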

\begin{proof}
From Table $\ref{Ta:N12prime}$, we can see that $\a^3$, $\b$ and $\a\b$ are not negatively realizable by the Path Lemma, and $\a^2$ is not negatively realizable because it has odd order. Finally, since $\a^3$ is not negatively realizable, neither is $\a$, so $N'_{12}$ is intrinsically chiral. 

The embedding on the right in Figure \ref{F:N12prime} positively realizes $\Aut(N'_{12}) \cong D_6$: $\alpha$ is realized by the glide rotation consisting of an order 3 rotation around the center of the figure, and an order 2 rotation about the dashed circle in the diagram (going through the midpoints of $\overline{bj}$, $\overline{hl}$ and $\overline{gi}$); $\beta$ is realized by the order 2 rotation about the vertical axis through $a, b, j, e$.

Since the edge $\overline{bh}$ is not fixed pointwise by any automorphism of the graph, we can also positively realize every subgroup of $D_6$, by the Subgroup Theorem.
\end{proof}

\subsection{The graph $N'_{11}$}

The graph $N'_{11}$ is obtained from $N'_{12}$ (as shown in Figure \ref{F:N12prime}) by performing a $Y\nabla$-move removing vertex $m$ and adding triangle $deh$, resulting in the graph shown in Figure~\ref{F:N11prime}.

\begin{figure} [htbp]
$$\scalebox{.8}{\includegraphics{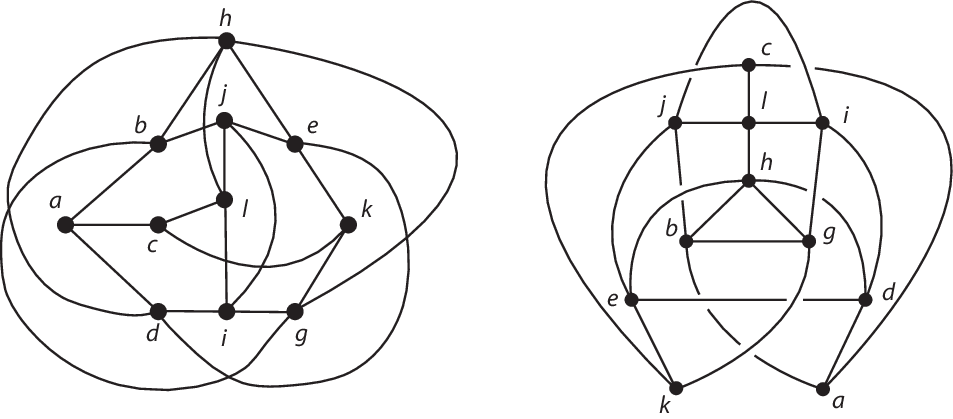}}$$
\caption{The graph $N'_{11}$, and an embedding $\Gamma$ of $N'_{11}$ with $\TSG(\Gamma) = \TSG_+(\Gamma) = \Z_2$.}
\label{F:N11prime}
\end{figure}

Every automorphism of $N'_{11}$ fixes $h$ (the only vertex of degree 5), $c$ (the only vertex of degree 3 with two neighbors of degree 3) and $l$ (the only vertex adjacent to both $h$ and $c$).  Moreover, every automorphism fixes (setwise) the sets $\{a, k\}$ (the other vertices of degree 3), $\{j, i\}$ (the other vertices adjacent to $l$) and $\{b, d, e, g\}$ (the remaining vertices).  The permutations of $\{a,k\}$ and $\{j,i\}$ determine the permutation of the remaining vertices, so $\Aut(N'_{11}) \cong \Z_2 \times \Z_2 = D_2$, generated by $\alpha = (ak)(be)(dg)$ and $\beta = (ji)(bd)(eg)$. The properties of the automorphisms are given in Table \ref{Ta:N11prime}.

\begin{table}
\begin{tabular}{|l|c|c|c|c|c|}
\hline
\rule{0pt}{.15in}Automorphism & Order & Fixed subgraph & $\hookrightarrow S^1$ & $\hookrightarrow S^2$ & Path Lemma \\ \hline
\rule{0pt}{.15in}$\a = (ak)(be)(dg)$ & 2 &   \begin{tikzpicture}[rotate=0] 
  \Vertex{k}
  \NO(k){j} \EA(k){l} \NOEA(k){a} \NOWE(k){b}
  \Edges(j,k,l,j)   \Edges(j,a) \Edges(j,b)
\end{tikzpicture}  & No & Yes & $a-d-e-k$ \\ \hline
\rule{0pt}{.15in}$\b = (ji)(bd)(eg)$ & 2 & \begin{tikzpicture}[rotate=0] 
     \Vertices{line}{a,b,c,d}
  \Edges(a,b,c, d)
    \NO(c){f}   
      \Edges(c,f)

\end{tikzpicture}  & No & Yes & $b-g-i-d$ \\ \hline
\rule{0pt}{.15in}$\a\b = (ak)(ji)(bg)(de)$ & 2 &  \begin{tikzpicture}[rotate=0] 
  \Vertices{line}{b,e,j,x,y,z}
  \Edges(b,e,j)
     \AddVertexColor{white}{x,y,z}
\end{tikzpicture}  & Yes & Yes & $a-d-i-g-k$ \\ \hline
\end{tabular}
\caption{Automorphisms of $N'_{11}.$}
\label{Ta:N11prime}
\end{table}

\begin{theorem}\label{T:N11prime}
The only group that is positively realizable for $N'_{11}$ is $\Z_2$.  $N'_{11}$ is intrinsically chiral, and no other nontrivial group is realizable.
\end{theorem}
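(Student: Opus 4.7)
The plan is to read off Table \ref{Ta:N11prime} and argue that of the three nontrivial automorphisms of $N'_{11}$, only $\a\b$ has a chance of being positively realizable, and then to exhibit an embedding that realizes it.

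First I would handle the chirality claim. For each of $\a$, $\b$, and $\a\b$, the table gives a path between two interchanged vertices that contains no fixed vertices and no flipped edges, so by the Path Lemma none of them is negatively realizable. Since these are the only nontrivial automorphisms, $N'_{11}$ has no negatively realizable automorphism, hence it is intrinsically chiral. In particular, any realizable group is positively realizable, so we may restrict to $\TSG_+$ from here on.

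Next I would rule out $\a$ and $\b$ as positively realizable. From the table, the fixed subgraph of $\a$ consists of the triangle $jkl$ together with the additional edges $\overline{ja}$ and $\overline{jb}$ at vertex $j$ (so $j$ has degree 4 in the fixed subgraph), which does not embed in $S^1$; similarly the fixed subgraph of $\b$ contains a vertex of degree 3, so it does not embed in $S^1$ either. By the Realizability Lemma, neither $\a$ nor $\b$ is positively realizable. This leaves $\a\b$ as the only candidate, so the only possible nontrivial realizable group is $\Z_2 = \langle \a\b\rangle$.

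Finally I would produce an embedding realizing $\a\b$. The fixed subgraph of $\a\b$ consists of the path $b-e-j$ together with three isolated midpoints (of the edges $\overline{ak}$, $\overline{ji}$, $\overline{bg}$ (or $\overline{de}$), and one more), which embeds in $S^1$, so there is no Smith-theoretic obstruction. The embedding on the right of Figure \ref{F:N11prime} is drawn to be invariant under the $180^\circ$ rotation about an axis through the fixed vertices $h$, $c$, $l$ that swaps $\{a,k\}$, $\{j,i\}$, $\{b,g\}$, and $\{d,e\}$, realizing $\a\b$ and hence $\Z_2$. Combining this embedding with the exclusion above yields $\TSG(\Gamma)=\TSG_+(\Gamma)=\Z_2$, and no larger or other nontrivial group is realizable.

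The only slightly delicate step is verifying that the displayed embedding really does realize $\a\b$ (and not some other automorphism); once the rotation axis and the placements of the interchanged pairs are specified, this is routine, so I do not expect a genuine obstacle.
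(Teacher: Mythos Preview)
Your approach is correct and essentially identical to the paper's: rule out negative realizability of all three nontrivial automorphisms via the Path Lemma, rule out $\alpha$ and $\beta$ positively via the Realizability Lemma, and exhibit the embedding in Figure~\ref{F:N11prime} realizing $\alpha\beta$. One small correction: the letters you read from the tikz code for the fixed-subgraph diagrams are just node placeholders, not the actual graph vertices---for example, $\alpha=(ak)(be)(dg)$ fixes $\{c,h,l,i,j\}$, not a triangle ``$jkl$'' with pendants at $a,b$, and $\alpha\beta$ fixes the path $c$--$l$--$h$ (as you correctly note later when describing the rotation axis), not ``$b$--$e$--$j$''---but the structural descriptions and the conclusions you draw from them are correct.
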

\begin{proof}
From Table \ref{Ta:N11prime}, none of the automorphisms are negatively realizable, by the Path Lemma, so $N'_{11}$ is intrinsically chiral.  Also, by the Realizability Lemma, neither $\a$ nor $\b$ is positively realizable.  However, the product $\alpha\beta = (ak)(ji)(bg)(ed)$ is positively realizable, as shown on the right in Figure \ref{F:N11prime}.  So the group $\Z_2$ is positively realizable, and no other nontrivial groups are realizable.
\end{proof}

\subsection{The graph $N'_{10}$}

The graph $N'_{10}$ is obtained from $N'_{11}$ (as shown in Figure \ref{F:N11prime}) by performing a $Y\nabla$-move removing vertex $c$ and adding triangle $akl$, resulting in the graph shown in Figure~\ref{F:N10prime}.

\begin{figure} [htbp]
$$\scalebox{.8}{\includegraphics{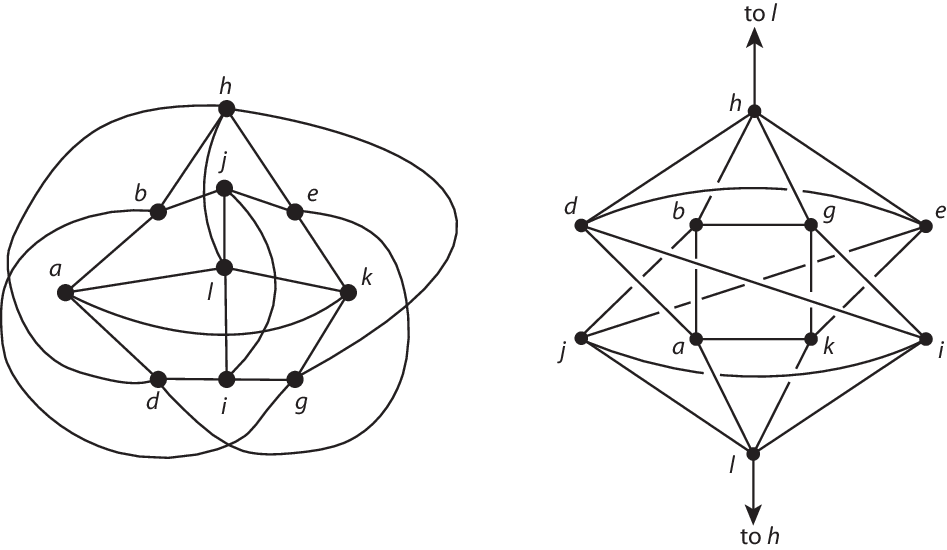}}$$
\caption{The graph $N'_{10}$, and an embedding $\Gamma$ of $N'_{10}$ with $\TSG(\Gamma) = \TSG_+(\Gamma) = D_2$.}
\label{F:N10prime}
\end{figure}

The graph $N'_{10}$ has two vertices of degree 5, $\{h, l\}$, and eight vertices of degree 4.  The subgraph induced by the vertices of degree 4 is isomorphic to a M\"{o}bius ladder $M_4$, which has automorphism group $D_8$ \cite{fl}.  These automorphisms extend to the rest of $N'_{10}$, so $\Aut(N'_{10}) \cong D_8$, generated by $\a = (hl)(abjekgid)$ and $\b = (hl)(ab)(dj)(ei)(gk)$. The properties of the automorphisms are given in Table \ref{Ta:N10prime}.

\begin{table}
\begin{tabular}{|l|c|c|c|c|c|}
\hline
\rule{0pt}{.15in}Automorphism & Order & Fixed subgraph & $\hookrightarrow S^1$ & $\hookrightarrow S^2$ & Path Lemma \\ \hline
\rule{0pt}{.15in}$\a = (hl)(abjekgid)$ & 8 &  \begin{tikzpicture}[rotate=0] 
  \Vertices{line}{x}
     \AddVertexColor{white}{x}
\end{tikzpicture}  & Yes & Yes & N/A\textsuperscript{2}\\ \hline
\rule{0pt}{.15in}$\a^2 = (ajki)(begd)$ & 4 &  \begin{tikzpicture}[rotate=0] 
  \Vertices{line}{h,l}
\Edges(h,l)
\end{tikzpicture}  & Yes & Yes & N/A\textsuperscript{1}  \\ \hline
\rule{0pt}{.15in}$\a^3 = (hl)(aeibkdjg)$ & 8 & \begin{tikzpicture}[rotate=0] 
  \Vertices{line}{x}
     \AddVertexColor{white}{x}
\end{tikzpicture}  & Yes & Yes & N/A\textsuperscript{2} \\ \hline
\rule{0pt}{.15in}$\a^4 = (ak)(bg)(ji)(ed)$ & 2 &  \begin{tikzpicture}[rotate=0] 
  \Vertices{line}{x,y,z}
     \AddVertexColor{white}{x,y,z}
\end{tikzpicture}  & Yes & Yes & $a-d-i-g-k$ \\ \hline
\rule{0pt}{.15in}$\b = (hl)(ab)(dj)(ei)(gk)$ & 2 &  \begin{tikzpicture}[rotate=0] 
  \Vertices{line}{x,y,z}
     \AddVertexColor{white}{x,y,z}
\end{tikzpicture}  & Yes & Yes & $d-i-l-j$ \\ \hline
\rule{0pt}{.15in}$\a\b = (aj)(de)(ik)$ & 2 & \begin{tikzpicture}[rotate=0] 
  \Vertex{f}
  \EA(f){i} \WE(f){h} \NO(f){e}  \EA(i){j}
  \Edges(f,i,e,f)
    \Edges(f,h)
         \AddVertexColor{white}{j}
\end{tikzpicture}   & No & Yes & $a-d-i-j$ \\ \hline
\end{tabular}
\caption{Automorphisms of $N'_{10}.$}
\label{Ta:N10prime}
\end{table}

\begin{theorem}\label{TN10prime}
The only groups that are positively realizable for $N'_{10}$ are $D_2$ and $\Z_2$.  Moreover, $N'_{10}$ is intrinsically chiral, and no other nontrivial groups are realizable.
\end{theorem}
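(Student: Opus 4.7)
The plan is to establish intrinsic chirality first and then determine the positively realizable groups.

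For intrinsic chirality, the Path Lemma (as recorded in Table~\ref{Ta:N10prime}) rules out $\a^4$, $\b$, and $\a\b$. The Realizability Lemma then eliminates $\a^2$: its fixed subgraph contains the edge $\overline{hl}$ (more than two points), while its order is $4 \neq 2$. The crux is ruling out $\a$ (and by symmetry $\a^3$), of order $8$, whose fixed subgraph is just the midpoint of $\overline{hl}$, so neither standard lemma applies directly. Suppose for contradiction that $\a$ is realized by an orientation-reversing finite order $f$. Since $|f|$ is a multiple of $8$, Smith Theory gives $\fix(f) = S^0$, and so $\fix(f^2) = \fix(f^4) = S^1$ (all containing this same $S^0$, and hence equal as nested copies of $S^1$ in $S^3$). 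The power $f^4$ realizes $\a^4$, which flips the four rungs $\overline{ak}, \overline{bg}, \overline{ji}, \overline{ed}$ of the embedded M\"obius ladder, so their four midpoints lie in $\fix(f^4) = S^1$. However, $\a$ rotates the $8$-cycle of $M_4$ by one position, so it permutes these four rungs cyclically as a $4$-cycle, and hence $f$ acts on the four midpoints as a $4$-cycle on $S^1$. This contradicts the fact that $f^2|_{S^1} = \mathrm{id}$ forces $f|_{S^1}$ to have order at most $2$. Hence $\a$ is not negatively realizable, and $N'_{10}$ is intrinsically chiral.

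For positive realizability, $\a\b$ is eliminated by the Realizability Lemma (its fixed subgraph does not embed in $S^1$). Suppose $\a$ is positively realized by an orientation-preserving finite order $f$; since $\a$ has a fixed point in $\Gamma$, Smith Theory forces $\fix(f) = S^1$, and then $\fix(f^2) = S^1 = \fix(f)$. But $f^2$ realizes $\a^2$, which fixes $h$ and $l$, so $h, l \in \fix(f)$, contradicting $f(h) = l$. The same argument rules out $\a^3$. For $\a^2$: an orientation-preserving finite order $f$ realizing $\a^2$ would have $\fix(f) = S^1 \supseteq \overline{hl}$ and $\fix(f^2) = S^1 = \fix(f)$, so the four rung-midpoints (which are fixed by $f^2$) must lie on $S^1 = \fix(f)$, and hence be fixed by $f$ pointwise. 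But $\a^2$ permutes the rungs as $(r_0\,r_2)(r_1\,r_3)$, so $f$ cannot fix all four midpoints, a contradiction.

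The only nontrivial automorphisms that remain potentially positively realizable are $\a^4$ and the reflections conjugate to $\b$. The subgroups of $D_8$ consisting only of such elements (together with the identity) are isomorphic to $\Z_2$ or $\Z_2 \times \Z_2 \cong D_2$. The embedding in Figure~\ref{F:N10prime} positively realizes $D_2$ (generated by rotations realizing $\a^4$ and $\b$), and since some edge is not pointwise fixed by any nontrivial element of this group, the Subgroup Theorem delivers $\Z_2$ as well. No larger or different group is realizable.

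The main obstacle is ruling out negative realizability of the order-$8$ automorphism $\a$, which escapes the Path Lemma (its fixed subgraph has only one point) and the Realizability Lemma; the new idea is to trap the four rung midpoints on the fixed $S^1$ of $f^4$ and observe that $f|_{S^1}$ has too small an order to realize the induced $4$-cycle.
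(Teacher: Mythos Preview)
Your proof is correct. The main structural difference from the paper lies in how the order-$8$ element $\a$ is handled. The paper takes the more economical route: it shows $\a^2$ is not realizable at all (not negatively by the Realizability Lemma, since its fixed subgraph contains the edge $\overline{hl}$ and its order is $4\neq 2$; not positively by essentially the same axis/rung argument you give for $\a^2$), and then simply observes that $\a^2$ is a power of each of $\a,\a^3,\a^5,\a^6,\a^7$, so none of those can be realizable either. One argument about $\a^2$ disposes of the whole cyclic part in both orientations at once.

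You instead attack $\a$ head-on. Your negative-realizability argument---forcing the rung fixed-points onto the common circle $\fix(f^2)=\fix(f^4)$ and then observing that $f^2$ must simultaneously fix and move them---is correct and is a device that does not appear in the paper; it would remain useful in situations where no proper power of the automorphism already fails the standard lemmas. (One small wording point: your ``midpoint'' should be read as ``the unique fixed point of $f^4$ on that rung''; existence and uniqueness follow because a finite-order self-homeomorphism of an arc that swaps the endpoints is strictly monotone decreasing.) Your positive-realizability arguments for $\a$ and for $\a^2$ are essentially the paper's $\a^2$ argument phrased in the nested-fixed-circle language, and the endgame---classifying which subgroups of $D_8$ survive and exhibiting the $D_2$ embedding---matches the paper. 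So you do somewhat more work than strictly necessary, but you gain a transferable technique in exchange.
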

\begin{proof}
From Table \ref{Ta:N10prime}, we see that $\a^4$, $\b$ and $\a\b$ are not negatively realizable by the Path Lemma, and $\a\b$ is also not positively realizable. Since $\a^2$ fixes more than two points ($h$, $l$ and the edge between them), and does not have order 2, it is not negatively realizable by the Realizability Lemma.

Towards contradiction, suppose the automorphism $\a^2 = (ajki)(begd)$ is positively realizable for an embedding $\Gamma$ of $N'_{10}$.  Since $\a^2$ fixes $h$ and $l$, it must be realized by a rotation; hence $\a^4$ is also a rotation.  Since $\a^4$ interchanges $a$ and $k$, it must fix a point on the edge $\overline{ak}$, so $\overline{ak}$ crosses the axis of rotation.  But then $\a^2$ would also fix a point on $\overline{ak}$, which contradicts the fact that $\a^2$ sends $\overline{ak}$ to the disjoint edge $\overline{ji}$.  Hence $\a^2$ is not positively realizable (and so not realizable).  Since $\a^2$ is a power of each of $\a, \a^3, \a^5, \a^6$ and $\a^7$, none of these are realizable. So the only power of $\a$ which may be realizable is $\a^4$. Since none of the automorphisms are negatively realizable, $N'_{10}$ is intrinsically chiral.

The only automorphisms which could be realizable are $\a^4$ and $\b$ (and their conjugates). Observe that $\a^4$ is only conjugate to itself, and multiplying $\b$ by any conjugate other than $\a^4\b$ gives a power of $\a$ which is not realizable. So the largest possible positively realizable group is isomorphic to $D_2$, with $\a^4$, $\b$ and $\a^4\b$.  The embedding shown on the right in Figure \ref{F:N10prime} positively realizes $D_2$ by realizing $\a^4$ by an order 2 rotation around the axis through $h$ and $l$, and realizing $\b$ by an order two rotation about the horizontal line through the middle of the diagram.  Since the edge $\overline{hd}$ is not fixed by any of these automorphisms, we can also positively realize $\Z_2$ by the Subgroup Theorem. Hence the only groups that are realizable for $N'_{10}$ are $D_2$ and $\Z_2$.
\end{proof}

\section{Conclusions and Questions}\label{S:conclusion}

Aside from adding to our library of graphs for which we know the topological symmetry groups, the purpose of this project was to explore whether there are any relationships among the topological symmetry groups of graphs related by $\nabla Y$ and $Y \nabla$ moves. The most interesting observation is that none of the graphs in the Heawood family have any topological symmetry groups that are realizable but not positively realizable; in fact, every graph in the Heawood family is intrinsically chiral!  This is in contrast to the Petersen family (another family related by $\nabla Y$ moves, shown in Figure \ref{F:petersen}), where most of the graphs do have topological symmetry groups that are realizable but not positively realizable \cite{efmsw}.

\begin{figure} [htbp]
$$\scalebox{.8}{\includegraphics{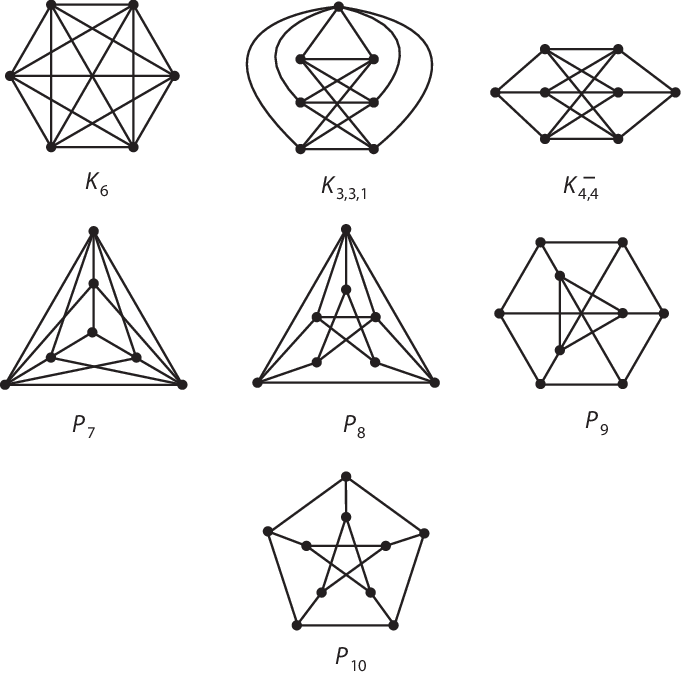}}$$
\caption{The Petersen family of graphs.}
\label{F:petersen}
\end{figure}

Putting together the results of all our theorems in Sections \ref{S:background} and \ref{S:details}, we have proved the following theorem.

\begin{theorem}
The graphs of the Heawood family are all intrinsically chiral.
\end{theorem}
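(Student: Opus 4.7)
The plan is to observe that the final theorem is an immediate consequence of the individual theorems in Section~\ref{S:details} together with the known intrinsic chirality of $K_7$ and $C_{14}$, so I would prove it by assembling these pieces. Specifically, \cite{ffn} already establishes that $K_7$ and $C_{14}$ are intrinsically chiral, and each of Theorems \ref{T:H8}, \ref{T:H9}, \ref{T:H10}, \ref{T:H11}, \ref{T:H12}, \ref{T:F9}, \ref{T:F10}, \ref{T:E10}, \ref{T:E11}, \ref{T:C11}, \ref{T:C12}, \ref{T:C13}, \ref{T:N11}, \ref{T:N10}, \ref{T:N9}, \ref{T:N12prime}, \ref{T:N11prime}, and \ref{TN10prime} includes an intrinsic chirality statement for the remaining 18 graphs. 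So the proof would consist of citing \cite{ffn} for $K_7$ and $C_{14}$, and citing the appropriate theorem for each of the other graphs, since the Heawood family contains exactly these 20 graphs as shown in Figure~\ref{F:Heawood}.

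If I instead had to carry out the chirality arguments from scratch for each graph, the uniform strategy would be: enumerate conjugacy classes of nontrivial automorphisms of $\Aut(\gamma)$; for each, compute the fixed subgraph $F$; and then rule out negative realizability by applying, in order of preference, (i) the Realizability Lemma when $F$ is nonplanar, (ii) the odd-order clause of the Realizability Lemma, (iii) the more-than-two-fixed-points clause when the automorphism does not have order 2, (iv) the Path Lemma when a fixed-vertex-free, flipped-edge-free path joins an interchanged pair, and finally (v) the observation that if $\phi^k$ is already known to be non-realizable for some $k$, then $\phi$ is non-realizable. Since conjugate automorphisms are realizable together by Lemma~\ref{subgroup}'s companion lemma at the end of Section~\ref{S:background}, it suffices to check one representative of each conjugacy class, which is exactly what the tables in Section~\ref{S:details} tabulate.

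The main obstacle I anticipate is the handful of automorphisms that (a) have planar fixed subgraph, (b) have even order, (c) fix at most two points so clause (iii) fails, and (d) have no available ``path-lemma'' path because too many intermediate vertices are fixed or too many edges are flipped. Examples include automorphisms like $\b\phi$ on $H_9$ or $\a^2$ on $N'_{10}$; for these the only route is step (v), finding an appropriate power whose non-realizability has already been established. So the organization of the argument matters: the conjugacy classes should be processed in an order where automorphisms whose powers provide useful obstructions are ruled out first. This is precisely what the tables and accompanying prose in Section~\ref{S:details} accomplish graph by graph, so assembling those results into a single theorem is the cleanest proof.
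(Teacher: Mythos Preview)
Your proposal is correct and matches the paper's own approach exactly: the paper simply states that ``putting together the results of all our theorems in Sections~\ref{S:background} and~\ref{S:details}'' yields the theorem, which is precisely the assembly-of-cases argument you describe. Your additional paragraphs sketching the uniform chirality strategy and anticipating the troublesome conjugacy classes are accurate commentary on how those individual theorems work, but they go beyond what is needed for this summary statement.
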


Intrinsically chiral graphs are of particular interest in both chemistry and topology, and substantial work has been done on proving particular graphs are intrinsically chiral \cite{ckn1, ckn2, ffn, fw}. These proofs are often difficult and performed only for small graphs or very special families. In light of our results for the Heawood family, it is natural to ask whether $\nabla Y$ and $Y\nabla$ moves always preserve intrinsic chirality.  In general, however, they do not, as we can see in the following examples:
\begin{itemize}
	\item Let $\g_1$ be the wedge of two copies of $K_7$ joined at a single vertex $v$, and $\Gamma_1$ be an embedding where one copy of $K_7$ is embedded as the mirror image of the other.  Then $\g_1$ is not intrinsically chiral.  However, if we apply a $\nabla Y$ move to one of the copies of $K_7$, then the resulting graph $\g_2$ will be intrinsically chiral (since $K_7$ is intrinsically chiral \cite{ffn}).  A similar construction can be done with the wedge of two copies of $C_{14}$ using a $Y \nabla$ move ($C_{14}$ is also shown to be intrinsically chiral in \cite{ffn}).  So neither $\nabla Y$ nor $Y\nabla$ moves preserve intrinsic chirality in general.
	\item We can also note that, in the Petersen family of graphs, the graph $P_8$ is intrinsically chiral from the results in \cite{efmsw}, but neither $P_7$ (obtained from $P_8$ by a $Y\nabla$ move) nor $P_9$ (obtained from $P_8$ by a $\nabla Y$ move) are intrinsically chiral.
\end{itemize}

So is the result for the Heawood family simply a coincidence, or is something deeper involved?

\begin{question}
Under what conditions do $\nabla Y$ and $Y\nabla$ moves preserve intrinsic chirality?
\end{question}

We hope to investigate this question further in future work.

\subsection*{Acknowledgements}

The authors are grateful to Erica Flapan for valuable conversations and insight during the writing of this paper.

\bibliographystyle{amsplain}
\bibliography{TSG.bib}

\end{document}